\documentclass[11pt,twoside]{amsart}
\title{Permutation ensembles on products of simplices}
 \author{Suho Oh}
 \address{Texas State University}
 \email{suhooh@txstate.edu}
\usepackage{prefix} % for final paper just copy and paste the sty here
\date{}
\begin{document}
    \begin{abstract}
We propose the study of $S_n$-ensembles: $n \times n$ arrays of permutations of $[n]$ that encode the boundary data of $n\Delta_{n-1}$. We develop a general toolkit for these ensembles, valid for all $n$, and use it to settle the case $n = 4$. Our main result characterizes which boundary data extend to the interior: an $S_4$-ensemble contains a permutation appearing four times if and only if it avoids a single forbidden configuration, which we call a \emph{pattern}. We show moreover that the pattern is rigid (an $S_4$-ensemble containing one is unique up to symmetry), so that the non-extendable boundaries form a single orbit.
    \end{abstract}
    \maketitle

\noindent\textbf{Keywords:} Product of simplices, Matching ensemble, Matching field, Fine mixed subdivision

\section{Introduction}

We begin with a puzzle. In a $3$-by-$3$ table $\T$, place a permutation of $[3] = \{1,2,3\}$ in each cell (written in one-line notation, so $231$ sends $1 \mapsto 2$, $2 \mapsto 3$, $3 \mapsto 1$), subject to one rule: the entry $\T_{i,j}$, in row $i$ and column $j$, must have the number $i$ in its $j$-th position. We underline these forced entries: \cref{tab:s3ensemble} satisfies the rule.
\begin{table}[h]
\centering
$$\begin{matrix}
\centering
\uu{1}32 & 3\uu{1}2 & 23\uu{1}\\
\uu{2}31 & 3\uu{2}1 & 31\uu{2}\\
\uu{3}21 & 2\uu{3}1 & 21\uu{3}
\end{matrix}$$
\caption{An $S_3$-ensemble.}
\label{tab:s3ensemble}
\end{table}

When the underline position is clear from where the entry sits, we may omit it, so that repeated values can be read off at a glance: in \cref{tab:s3ensemble}, $\T_{2,1} = \T_{3,2} = \T_{1,3} = 231$.

Two entries $\T_{i,j}$ and $\T_{i',j'}$ are \newword{linked}, written $\T_{i,j} \sim \T_{i',j'}$, if their permutations (ignoring the underlines) differ by a single transposition: in the table above, $\uu{1}32 \sim 3\uu{1}2$, but $\uu{1}32 \not\sim \uu{3}21$. The table satisfies \newword{linkage} if every entry $\T_{i,j}$ is linked to some entry $\T_{i,j'}$ of its row and some entry $\T_{i',j}$ of its column. A $3 \times 3$ table of permutations satisfying the forced-entry and linkage conditions is an \newword{$S_3$-ensemble}.

It turns out that $S_3$-ensembles are related to fine mixed subdivisions of $3\Delta_2$ (which come from triangulations of $\Delta_2 \times \Delta_2$): an $S_3$-ensemble specifies the boundary (the numbers shown on the exterior in \cref{fig:3d2trivoro}), and the fact that $231$ appears three times in \cref{tab:s3ensemble} corresponds to the fact that this boundary can be completed to the fine mixed subdivision drawn in \cref{fig:3d2tri} (for more details, see \cref{ex:readedge} and \cref{ex:readtable}). In fact, the assertion that every such table contains a permutation appearing three times is equivalent to the acyclic system theorem of Ardila and Ceballos \cite{ArdilaCeballos13} for $3\Delta_2$.

\begin{figure}[ht]
\centering
\begin{subfigure}[t]{0.45\textwidth}
  \centering
  \resizebox{\linewidth}{!}{\begin{tikzpicture}[scale=2]
% Corners of the size-3 triangle (barycentric weights on i, j, k respectively)
\coordinate (A) at (0,0);                % i=3  -> vertex 1'
\coordinate (B) at (3,0);                % j=3  -> vertex 3'
\coordinate (C) at (1.5,{3*sqrt(3)/2});  % k=3  -> vertex 2'

% Lattice points P(i)(j), k = 3-i-j, at (i*A + j*B + k*C)/3
\foreach \i in {0,...,3}{\foreach \j in {0,...,3}{
  \pgfmathtruncatemacro{\k}{3-\i-\j}
  \ifnum\k<0\else
    \path let \p1=(A),\p2=(B),\p3=(C),
      \n1={(\i*\x1+\j*\x2+\k*\x3)/3},
      \n2={(\i*\y1+\j*\y2+\k*\y3)/3}
    in coordinate (P\i\j) at (\n1,\n2);
  \fi}}

% Upward unit-triangle cells, lightly tinted
\fill[red!12]   (P30) -- (P20) -- (P21) -- cycle;   % cell 1
\fill[blue!12]  (P20) -- (P10) -- (P11) -- cycle;   % cell 2
\fill[green!14] (P03) -- (P02) -- (P12) -- cycle;   % cell 3

% Outer boundary
\draw[very thick] (A) -- (B) -- (C) -- cycle;

% Internal subdivision edges
\draw (P20) -- (P21);
\draw (P02) -- (P12);
\draw (P10) -- (P11);
\draw (P20) -- (P11);
\draw (P11) -- (P01);
\draw (P12) -- (P11);

% Lattice dots
\foreach \i in {0,...,3}{\foreach \j in {0,...,3}{
  \pgfmathtruncatemacro{\k}{3-\i-\j}\ifnum\k<0\else
    \fill (P\i\j) circle (0.9pt);
  \fi}}

% Triangle-cell labels at true centroids
\node[red!70!black]   at (barycentric cs:P30=1,P20=1,P21=1) {\small $1$};
\node[blue!70!black]  at (barycentric cs:P20=1,P10=1,P11=1) {\small $2$};
\node[green!55!black] at (barycentric cs:P03=1,P02=1,P12=1) {\small $3$};

% Corner labels, pushed outward away from the edges
\node[below left, xshift=-1pt] at (A) {$1'$};
\node[below right, xshift=1pt] at (B) {$3'$};
\node[above, yshift=1pt]       at (C) {$2'$};
\end{tikzpicture}}
  \caption{Labelled triangulation.}
  \label{fig:3d2trilb}
\end{subfigure}
\hfill
\begin{subfigure}[t]{0.45\textwidth}
  \centering
  \resizebox{\linewidth}{!}{\begin{tikzpicture}[scale=2,
  vor/.style={line width=1.1pt, line cap=round, line join=round},
  vlab/.style={circle, fill=white, inner sep=0.4pt, font=\footnotesize}]
% Corners (barycentric weights on i, j, k)
\coordinate (A) at (0,0);                % 1'
\coordinate (B) at (3,0);                % 3'
\coordinate (C) at (1.5,{3*sqrt(3)/2});  % 2'

% Lattice points P(i)(j), k=3-i-j
\foreach \i in {0,...,3}{\foreach \j in {0,...,3}{
  \pgfmathtruncatemacro{\k}{3-\i-\j}
  \ifnum\k<0\else
    \path let \p1=(A),\p2=(B),\p3=(C),
      \n1={(\i*\x1+\j*\x2+\k*\x3)/3},
      \n2={(\i*\y1+\j*\y2+\k*\y3)/3}
    in coordinate (P\i\j) at (\n1,\n2);
  \fi}}

% Faint subdivision underneath, so the dual paths are the focus
\draw[gray!55, thin] (P20)--(P21);
\draw[gray!55, thin] (P02)--(P12);
\draw[gray!55, thin] (P10)--(P11);
\draw[gray!55, thin] (P20)--(P11);
\draw[gray!55, thin] (P11)--(P01);
\draw[gray!55, thin] (P12)--(P11);
\draw[gray!70, thin] (A)--(B)--(C)--cycle;

% Cell centroids (Q) and boundary-edge midpoints (B..)
\coordinate (Q00) at ($(P00)!0.333!(P10)!0.333!(P01)$);
\coordinate (Q10) at ($(P10)!0.333!(P20)!0.333!(P11)$);
\coordinate (Q01) at ($(P01)!0.333!(P11)!0.333!(P02)$);
\coordinate (Q20) at ($(P20)!0.333!(P30)!0.333!(P21)$);
\coordinate (Q11) at ($(P11)!0.333!(P21)!0.333!(P12)$);
\coordinate (Q02) at ($(P02)!0.333!(P12)!0.333!(P03)$);
\coordinate (B120) at ($(P30)!0.5!(P20)$);
\coordinate (B121) at ($(P20)!0.5!(P10)$);
\coordinate (B122) at ($(P10)!0.5!(P00)$);
\coordinate (B230) at ($(P00)!0.5!(P01)$);
\coordinate (B231) at ($(P01)!0.5!(P02)$);
\coordinate (B232) at ($(P02)!0.5!(P03)$);
\coordinate (B130) at ($(P30)!0.5!(P21)$);
\coordinate (B131) at ($(P21)!0.5!(P12)$);
\coordinate (B132) at ($(P12)!0.5!(P03)$);

% Dual trees, one per label class
\draw[red!75!black, vor]   (B120) -- (Q20) -- (Q11) -- (Q01) -- (B231);
\draw[red!75!black, vor]   (Q20) -- (B130);
\draw[blue!70!black, vor]  (B121) -- (Q10) -- (Q00) -- (B230);
\draw[blue!70!black, vor]  (Q10) -- (Q11) -- (B131);
\draw[green!45!black, vor] (B122) -- (Q00) -- (Q01) -- (Q02) -- (B232);
\draw[green!45!black, vor] (Q02) -- (B132);

% Small nodes at the dual vertices for definition
\foreach \q in {Q00,Q10,Q01,Q20,Q11,Q02}{\fill[black] (\q) circle (0.7pt);}

% Boundary labels (the system of permutations), on white chips so they sit on the edges cleanly
\node[red!75!black, vlab]   at (B120) {$1$};
\node[red!75!black, vlab]   at (B231) {$1$};
\node[red!75!black, vlab]   at (B130) {$1$};
\node[blue!70!black, vlab]  at (B121) {$2$};
\node[blue!70!black, vlab]  at (B230) {$2$};
\node[blue!70!black, vlab]  at (B131) {$2$};
\node[green!45!black, vlab] at (B122) {$3$};
\node[green!45!black, vlab] at (B232) {$3$};
\node[green!45!black, vlab] at (B132) {$3$};

% Corner labels, pushed outward
\node[below left,  xshift=-1pt] at (P30) {$1'$};
\node[above,       yshift=1pt]  at (P00) {$2'$};
\node[below right, xshift=1pt]  at (P03) {$3'$};
\end{tikzpicture}}
  \caption{Its Voronoi dual.}
  \label{fig:3d2trivoro}
\end{subfigure}
\caption{A fine mixed subdivision of $3\Delta_2$ corresponding to \cref{tab:s3ensemble}.}
\label{fig:3d2tri}
\end{figure}

From Theorem 4.2 (2-D acyclic system theorem) of \cite{ArdilaCeballos13}, which describes exactly what kinds of exterior labeling can be extended to a fine mixed subdivision of $n\Delta_2$, we get the following as a corollary.

\newtheorem*{thm:s3ensemble}{Theorem \ref{thm:s3ensemble}}
\begin{thm:s3ensemble}
Any $S_3$-ensemble has some $w \in S_3$ that appears $3$ times.
\end{thm:s3ensemble}

Our goal is to answer a similar question in higher dimensions: when the boundary information is given, exactly when can we complete the fine mixed subdivision of the interior of $n\Delta_{d-1}$? Throughout the paper we focus on the ``square'' case $n\Delta_{n-1}$, that is, $d = n$. We discuss the obstacles in the general case $n\Delta_{d-1}$ in \cref{rem:whyacychard} and \cref{sec:further}.

A permutation $w = w_1 \dots w_n \in S_n$ with one underlined entry $\uu{w_i}$ can be read as a matching between the equicardinal sets $[n] \setminus i$ and $[n] \setminus w_i$, obtained by removing the underlined position and value. Any $\{i_1, \dots, i_k\} \subseteq [n] \setminus i$ then induces the submatching $i_j \mapsto w_{i_j}$. For an $n \times n$ table satisfying the forced-entry condition, an \newword{induced submatching} between equicardinal $I, J \subsetneq [n]$ is any submatching between $I$ and $J$ induced by an entry of the table. For example, in \cref{tab:s3ensemble}, the induced submatching between positions $23$ and entries $13$ only consists of $\{23,31\}$: the matching sending position $2$ to entry $3$ and position $3$ to entry $1$.

\begin{definition}
\label{def:snensemble}
An $S_n$-ensemble is an $n$-by-$n$ table $\T$ where each entry is an element of $S_n$ with the following properties:
\begin{itemize}
    \item (forced-entry) If we write $\T_{i,j} = w_1 \dots \uu{w_j} \dots w_n$, then $w_j = i$.
    \item (linkage) For each $i,j \in [n]$, there is some $i', j' \in [n]$ such that $\T_{i,j} \sim \T_{i,j'}$ and $\T_{i,j} \sim \T_{i',j}$.
    \item (compatible) For each equicardinal $I,J \subsetneq [n]$, there is at most one induced submatching between $I$ and $J$.
\end{itemize}
\end{definition}

For the compatible condition, when $|I| = |J| = k$ we will call it the \newword{$k$-compatible} condition. For example, we will not be able to see $\uu{1}432$ and $1\uu{4}23$ in the same $S_4$-ensemble, since they are not 2-compatible. On the other hand, $\uu{1}432$ and $4\uu{1}32$ are 2-compatible.

\begin{remark}
\label{rem:s3compat}
For $n = 3$, the compatible condition holds automatically, which is why it was omitted from the informal definition of $S_3$-ensembles above. Indeed, $1$-compatibility is vacuous, as there is only one possible matching between two singletons, and an induced submatching between $2$-subsets $I = [3] \setminus \{j\}$ and $J = [3] \setminus \{i\}$ can only come from the single entry $\T_{i,j}$, so uniqueness is automatic.
\end{remark}

We call the cells $(w(j), j)$, $j \in [n]$, the \newword{table cells} of $w \in S_n$. These are exactly the cells at which $w$ may appear as an entry, so $w$ appears at most $n$ times in $\T$, at most once in each row and column. In \cref{thm:boundary} we make the connection between $S_n$-ensembles and fine mixed subdivisions precise: the boundary data of a fine mixed subdivision of $n\Delta_{n-1}$ gives rise to an $S_n$-ensemble, and the boundary data can be completed to a fine mixed subdivision of $n\Delta_{n-1}$ if and only if some $w \in S_n$ appears $n$ times in the associated table. This motivates the following question.

\begin{question}
\label{ques:snensemble}
Can one describe the exact condition for an $S_n$-ensemble to have some $w \in S_n$ that appears $n$ times?
\end{question}

The main results of this paper answer \cref{ques:snensemble} for $n = 4$ completely, and for $n = 5$ completely modulo computer verification. We stress the scale of the question: for $n = 3$ it recovers the acyclic system theorem of Ardila and Ceballos \cite{ArdilaCeballos13}, via the identification of $S_3$-ensembles with acyclic systems on $3\Delta_2$ (\cref{thm:s3ensemble}). Moreover, the $n = 3$ story governs the whole two-dimensional theorem: acyclicity of a system of permutations on $n\Delta_2$ is detected on triples of labels, and the restriction to each triple is exactly an $S_3$-ensemble, so the acyclic system theorem for $n\Delta_2$ is a local-to-global statement whose local model is $3\Delta_2$. The case $n = 4$ settled here is thus the first instance beyond a result that was itself the main theorem of a prior paper. For $n = 4$:

\newtheorem*{thm:s4ensemble}{Theorem \ref{thm:s4ensemble}}
\begin{thm:s4ensemble}
An $S_4$-ensemble has some $w \in S_4$ that appears $4$ times if and only if it is \emph{triangle-free}: its blocking graph contains no directed triangle, i.e.\ no \emph{pattern}.
\end{thm:s4ensemble}

Moreover, the obstruction is rigid:

\newtheorem*{thm:rigid}{Theorem \ref{thm:rigid}}
\begin{thm:rigid}
An $S_4$-ensemble containing a pattern is unique up to symmetry: it is the explicit ensemble $E^*_4$ of \cref{tab:estar4}. Consequently the patterns and the $S_4$-ensembles without a permutation of multiplicity four are in bijection, and there are exactly $192$ of each.
\end{thm:rigid}

Both theorems are proved through a directed graph that we attach to every ensemble in \cref{sec:algebra}. The \newword{blocking graph} of $\T$ has vertex set $S_n$, with an edge $u \to_j v$ whenever the table cell of $u$ in column $j$ carries the entry $v \neq u$: the permutation $v$ \emph{blocks} $u$ from appearing there. The out-degree of $u$ is $n - \mathrm{mult}(u)$, so the permutations of multiplicity $n$ are exactly the sinks. A graph with no sink has a directed cycle, and there are no cycles of length two, so the shortest possible cycle is a triangle. For $n = 4$ we take this directed triangle as our central object, calling it a \newword{pattern}. It is forced into a single normal form (\cref{lem:patternform}). In this language, \cref{thm:s4ensemble} is a girth statement: the blocking graph of an $S_4$-ensemble has a sink if and only if it contains no directed triangle.

The forward direction splits into two independent halves. 

\newtheorem*{thm:mult3}{Theorem \ref{thm:mult3}}
\begin{thm:mult3}
Every $S_4$-ensemble contains a permutation that appears at least $3$ times.
\end{thm:mult3}

The second half is a promotion lemma (\cref{lem:mult3done}): in a triangle-free ensemble, multiplicity three promotes to multiplicity four. Together they prove the girth statement. We emphasize that most of the work is done by tools valid for every $n$: the algebraic dictionary of \cref{sec:algebra} turns linkage and compatibility into one-sided multiplication and cycle conditions on quotients, and yields blocker tools (\cref{subsec:blocker}) governing permutations of near-full multiplicity. At $n = 4$ the dictionary leaves so little room that the link graph of every line is forced to be a spanning tree: a star or a path (\cref{cor:linetree}). The $n = 4$ proofs assemble these lemmas, with the special role of $n = 4$ isolated at a handful of counting coincidences.

For $n = 5$ the mechanism persists. In \cref{sec:further} we exhibit an $S_5$-ensemble $E^*_5$ in which no permutation appears five times, show by computer that it is the unique such ensemble up to symmetry, and that it arises from a genuine fine mixed subdivision structure on the boundary of $5\Delta_4$. The mechanism behind both $E^*_4$ and $E^*_5$ is the same: a directed triangle in the blocking graph among permutations of near-full multiplicity, each blocking the next from reaching full multiplicity. In this sense $E^*_5$ is the natural extension of $E^*_4$, and we propose such \newword{blocking cycles} as the organizing principle for general $n$.

The paper is organized as follows. \cref{sec:prod} reviews products of simplices and matching ensembles, and proves the correspondence between $S_n$-ensembles and boundary data of $n\Delta_{n-1}$. \cref{sec:algebra} develops an algebraic dictionary for $S_n$-ensembles that makes every subsequent verification a short computation with permutations: linkage becomes one-sided multiplication, compatibility becomes a cycle condition on quotients (\cref{lem:compat}), and the symmetry group acts transitively on cells (\cref{lem:symmetry}), all valid for every $n$, with the $n = 4$ specialization forcing the link graph of every line to be a star or a path (\cref{cor:linetree}). The section then introduces the blocking graph with its basic invariants (\cref{prop:blockdeg,lem:girth}), and closes with the blocker tools of \cref{subsec:blocker}, the general-$n$ lemmas on which the $n = 4$ argument rests. \cref{sec:main} proves the three theorems above, via the multiplicity theorem (\cref{thm:mult3}) and the promotion lemma (\cref{lem:mult3done}). \cref{sec:further} exhibits the unique $n = 5$ obstruction $E^*_5$ and collects further directions, including the general conjectures the new framework isolates.

\section{Connections to products of simplices}
\label{sec:prod}

\subsection{Basics on products of simplices}

The product of two simplices is a fundamental polytope whose triangulations exhibit rich combinatorial structure. We refer to \cite{Santos05} for background, and to \cite{zbMATH05192027}, \cite{ArdilaCeballos13} for the connections to Schubert calculus and tropical geometry that motivate this paper. Under the Cayley trick of Santos \cite{zbMATH02223039}, these triangulations become the fine mixed subdivisions of $n\Delta_{d-1}$ that we study, and the same data reappears through matching ensembles and through the trianguloids of Galashin, Nenashev, and Postnikov, which encode triangulations of root polytopes \cite{GalashinNenashevPostnikov23} in the spirit of \cite{KalmanPostnikov17}. Even the global structure of these triangulations is subtle: the flip graph of $\Delta_3 \times \Delta_{d-1}$ is connected \cite{Liu20}, but that of $\Delta_4 \times \Delta_{d-1}$ is not for large $d$ \cite{Liu18}. Closest to our concerns is the tropical and oriented-matroid strand: polyhedral matching fields, whose linkage-based input is close in spirit to our ensembles, produce oriented matroids (not necessarily realizable) from such triangulations \cite{zbMATH07843943}, with a topological representation theorem for tropical oriented matroids in \cite{zbMATH06535105}. Most directly, the extendability results of Ceballos, Padrol, and Sarmiento for partial triangulations of $\Delta_{n-1}\times\Delta_{n-1}$, obtained via their Dyck path triangulation \cite{zbMATH06381976} (extended abstract \cite{zbMATH06577215}), are the closest precedent for the completability question we study on the boundary of $n\Delta_{n-1}$. Further connections include non-regular triangulations \cite{zbMATH00881214}, cellular resolutions \cite{DOCHTERMANN2012304}, subdivision algebras \cite{zbMATH07814959}, face structure \cite{zbMATH07419599}, and size bounds for triangulations of simplotopes \cite{zbMATH06825996}.

We denote the simplex formed by the convex hull of $e_1,\ldots,e_n$ as $\Delta_{n-1}$, and write $n\Delta_{d-1}$ for the Minkowski sum $\Delta_{d-1} + \cdots + \Delta_{d-1}$ ($n$ summands). The vertices of $\Delta_{d-1}$ are labeled $1', \dots, d'$.

\begin{definition}
\label{def:fms}
A \newword{mixed cell} of $n\Delta_{d-1}$ is a Minkowski sum $B_1 + \cdots + B_n$, where each $B_i$ is a face of $\Delta_{d-1}$. A \newword{mixed subdivision} of $n\Delta_{d-1}$ is a collection of mixed cells, together with their Minkowski decompositions, whose union is $n\Delta_{d-1}$ and such that any two cells intersect in a common face, compatibly with the chosen decompositions. A mixed subdivision is \newword{fine} if $\sum_i \dim B_i = d-1$ for every cell.
\end{definition}

\begin{remark}
All mixed subdivisions considered in this paper are fine. When there is no risk of confusion we abbreviate ``fine mixed subdivision'' to \newword{subdivision}.
\end{remark}

For $d = 3$, a fine mixed subdivision of $n\Delta_2$ is a subdivision of an equilateral triangle of side length $n$ into upward unit triangles and unit rhombi, as illustrated in \cref{fig:3d2trilb}. These subdivisions, also known as lozenge tilings \cite{Santos05}, are of particular interest due to their connections with matroids, tropical hyperplane arrangements, Schubert calculus \cite{zbMATH05192027}, and tropical oriented matroids \cite{ArdilaDevelin07}, \cite{OYoo10}, \cite{Horn16}. Ardila and Billey demonstrated that the configuration of the unit triangles must be spread out: for each $k$, no size $k$ subtriangle can contain more than $k$ unit triangles.

\begin{theorem}[Spread out simplices \cite{zbMATH05192027}]
\label{thm:2dspread}
A collection of $n$ triangles in $n\Delta_2$ can be extended to a fine mixed subdivision if and only if it is spread out.
\end{theorem}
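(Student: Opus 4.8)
\emph{Reformulation as a Hall-type inequality.} The plan is to first translate the geometry into a deficiency condition on lattice points, then treat the two directions separately. I would begin from the standard fact that every fine mixed subdivision of $n\Delta_2$ (such as the one in \cref{fig:3d2tri}) consists of upward unit triangles and rhombi only, with no downward unit triangles, and that it contains exactly $n$ upward unit triangles. Encode a candidate collection $\mathcal{C}$ by recording, for each of its upward triangles, the triple $(a,b,c)$ of lattice distances to the three sides of $n\Delta_2$, so that $a+b+c = n-1$ with $a,b,c \ge 0$. A length-$k$ upward sub-triangle is exactly a region $\{a \ge \alpha,\ b \ge \beta,\ c \ge \gamma\}$ with $\alpha+\beta+\gamma = n-k$, so ``contains at most $k$ unit triangles'' becomes $\#\{i : a_i \ge \alpha,\ b_i \ge \beta,\ c_i \ge \gamma\} \le n - (\alpha+\beta+\gamma)$, required for all $\alpha,\beta,\gamma \ge 0$. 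Phrasing spread-out as this system of inequalities is what makes both directions tractable, since it is visibly of Hall's marriage type.

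\emph{Necessity.} Given a genuine tiling, I would verify every inequality above. Fix a length-$k$ sub-triangle $S$. Because a fine mixed subdivision has no downward unit triangles, the tiles meeting $S$ are upward triangles and rhombi, and cutting the tiling along $\partial S$ writes $S$ as the union of the upward triangles of $\mathcal{C}$ lying inside it, together with rhombi and sliced boundary pieces. Comparing the area $k^2$ of $S$ with the contributions of the interior tiles and of the rhombi sliced by the three length-$k$ edges yields the bound of $k$ on the number of interior upward triangles. I expect this direction to be routine once the bookkeeping of sliced rhombi along the three edges is organized correctly.

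\emph{Sufficiency.} This is the heart of the matter. Given $\mathcal{C}$ spread-out, I would build a tiling by induction on $n$, separating off the bottom edge. Writing $a$ for the distance from the bottom edge, the triangles with $a \ge 1$ are the candidate collection for the top copy of $(n-1)\Delta_2$, and this restricted collection is automatically spread-out: the inequality for $(n-1)\Delta_2$ at offsets $(\alpha',\beta',\gamma')$ is literally the inequality for $\mathcal{C}$ at $(\alpha'+1,\beta',\gamma')$. The work is to fill the region below an inductively tiled $(n-1)\Delta_2$ using the triangles with $a=0$ and a compatible layer of rhombi, so that the lower boundary of the interior tiling matches the layer. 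Guaranteeing that such a layer exists is where the spread-out inequalities are consumed, and I would set it up as a system of distinct representatives between the triangles forced toward the bottom and the available positions along the bottom edge, applying Hall's theorem with the spread-out bounds as the Hall condition.

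\emph{Main obstacle.} The delicate point is that a straight horizontal cut does not separate the bottom layer cleanly: a vertical rhombus straddles the cut, so the restriction of a tiling to the top $(n-1)\Delta_2$ need not be a tiling, and conversely the interior and the bottom layer cannot be chosen independently. The honest resolution is to peel along an adaptively chosen monotone lattice path that crosses no tile, and to build the layer and the interior in concert; establishing that the spread-out condition guarantees a valid such path is the crux and the step most likely to demand care. An alternative and ultimately cleaner route is the one of Ardila and Billey: pass through the Cayley trick to triangulations of $\Delta_2 \times \Delta_{n-1}$, encode tilings by tropical pseudohyperplane arrangements, and observe that spread-out becomes an acyclicity condition on an associated system, whence extendability follows from the existence of an acyclic completion. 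I would present the inductive Hall argument as the main line and remark that it matches this tropical formulation.
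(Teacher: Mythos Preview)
The paper does not prove \cref{thm:2dspread}; it is quoted from Ardila--Billey as a background result with no argument supplied, so there is nothing in the paper to compare your proposal against.

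Taken on its own, your sketch is a plausible outline but not yet a proof. Necessity is essentially routine once the sliced-rhombus bookkeeping is done. For sufficiency you have correctly located the real difficulty---a straight horizontal cut does not cleanly separate the problem, so the induction does not close without additional structure---but your proposed fixes (an adaptive monotone peel, or a Hall/SDR setup between bottom-row triangles and available slots) remain plans rather than arguments: you still need to specify the bipartite graph, isolate exactly which Hall inequalities are required at the inductive step, and verify that the spread-out bounds supply precisely those. Your final paragraph also conflates two distinct results from two different papers: the acyclic-system theorem is \cref{thm:2dacyc}, due to Ardila--Ceballos, and concerns the edge permutations on the boundary, not the positions of the unit triangles; it does not yield the spread-out theorem in the manner you suggest, and Ardila--Billey's own argument for \cref{thm:2dspread} proceeds along different lines.
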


Given a fine mixed subdivision of $n\Delta_{d-1}$ and a boundary edge of $n\Delta_{d-1}$ lying over the edge $x'y'$ of $\Delta_{d-1}$, the cells of the subdivision restrict to a subdivision of that edge into $n$ unit segments, and each unit segment is a translate of the unique summand $B_i$ of its cell that contains the edge $x'y'$. Since the subdivision is fine, each index $i \in [n]$ contributes exactly one unit segment, so reading off the contributing indices as we move from $x'$ to $y'$ yields a permutation of $[n]$. The \newword{system of permutations} of the subdivision is the collection of these permutations over all edges of $\Delta_{d-1}$. A system of permutations is called \newword{acyclic} if, when the three permutations on the edges of any triangle $x'y'z'$ are read clockwise starting from a vertex, no cycle of the form $\ldots i \ldots j \ldots, \ldots i \ldots j \ldots, \ldots i \ldots j \ldots$ is seen.

\begin{theorem}[2-D acyclic system \cite{ArdilaCeballos13}]
\label{thm:2dacyc}
A system of permutations comes from a fine mixed subdivision of $n\Delta_2$ if and only if it is acyclic.
\end{theorem}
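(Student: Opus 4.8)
The plan is to first recast the statement in a purely combinatorial form and then prove the two implications separately. By the forced-entry condition, a fixed $w = w_1w_2w_3w_4 \in S_4$ can be placed only in the four cells $(w_j,j)$, $j \in [4]$; since $w$ is a permutation these cells lie one in each row and one in each column, so they form a transversal of the table. Consequently "$w$ appears four times" is equivalent to $\T_{w_j,j} = w$ for all $j$, i.e. $w$ occupies a full transversal. Thus \cref{thm:s4ensemble} becomes the assertion that the table carries such a transversal permutation if and only if it avoids a 33-cyclic pattern. Before attacking either direction I would record the rigidity that the ensemble axioms impose: in any column $j$, a link $\T_{i,j} \sim \T_{i',j}$ must be the transposition that moves the forced value out of position $j$, and together with the $2$- and $3$-compatible conditions this forces, for every pair $\{a,b\} \subset [4]$, a single well-defined induced submatching on those coordinates. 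The whole problem then reduces to deciding when these pairwise submatchings can be threaded by one global permutation.

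Next I would fix the meaning of the obstruction. The 33-cyclic pattern I would define as the three-dimensional analogue of the cyclic configuration forbidden by \cref{thm:2dacyc}: whereas in two dimensions a single pair of values repeating in the same cyclic order across the three edges of the triangle blocks a tiling, here the relevant obstruction consists of two interlocking triples of entries, each exhibiting such a $\ldots i \ldots j \ldots$ cyclic repetition on a coordinate pair, arranged so that no common transversal can be consistent with both; the name records the two three-fold cyclic repetitions. I would also note, as in the derivation of the $S_3$ theorem, that each of the four triangular faces of the table's boundary data is automatically acyclic, so by \cref{thm:2dacyc} every face extends to a tiling on its own; the content of \cref{thm:s4ensemble} is exactly whether these four boundary tilings close up, and a transversal permutation is precisely a record of such a closing-up.

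For necessity, suppose some $w$ appears four times. Then $w$ supplies a single global order that is simultaneously compatible with the induced submatching on every coordinate pair, since all four transversal cells equal $w$. A 33-cyclic pattern, on the other hand, would force two coordinate pairs to carry cyclic data that cannot be linearized simultaneously by any one permutation. Matching these against the submatchings determined by $w$ yields a contradiction, so the pattern cannot be present. This direction is a finite, contrapositive check once the pairwise-submatching bookkeeping of the first step is in place.

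For sufficiency, assume the table avoids the 33-cyclic pattern; I must construct a transversal permutation. The strategy is to propagate a choice of value at the first position through the coupled column and row structure, extending it one coordinate at a time using compatibility, and then to verify $\T_{w_j,j} = w$ at each cell of the resulting transversal. The main obstacle, and the heart of the whole proof, is the global consistency of this propagation: in two dimensions a single acyclicity condition already suffices, but in three dimensions the four faces can each be individually tileable and still fail to glue, and I expect that avoiding the 33-cyclic pattern is exactly the condition that rules out such a gluing failure. I would establish this either by a careful case analysis over the tightly constrained column structures catalogued in the first step, or, more conceptually, by exhibiting a cyclic-order invariant that is well defined precisely when the pattern is absent and whose consistency produces the required transversal.
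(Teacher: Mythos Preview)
Your proposal does not address the stated theorem. Theorem~\ref{thm:2dacyc} is the 2-D acyclic system theorem of Ardila--Ceballos about tilings of $n\Delta_2$; the paper merely cites it from \cite{ArdilaCeballos13} and gives no proof of its own to compare against. Your entire write-up is instead a sketch for Theorem~\ref{thm:s4ensemble}, the paper's main result on $S_4$-ensembles and 33-cyclic patterns, which is a different statement.

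Even read as a proposal for Theorem~\ref{thm:s4ensemble}, there are genuine gaps. First, your description of the 33-cyclic pattern as ``two interlocking triples of entries, each exhibiting such a $\ldots i \ldots j \ldots$ cyclic repetition on a coordinate pair'' is not the paper's Definition~\ref{def:33cyclic}: there the pattern is three specific underlined permutations $w^{(1)},w^{(2)},w^{(3)}$ obtained from a base $w\in S_4$ by swapping the entries at positions $v_i$ and $v_4$ and underlining position $v_{i+1}$, for a fixed auxiliary $v\in S_4$. Your necessity argument (``cyclic data that cannot be linearized simultaneously by any one permutation'') does not match this structure and would not go through as stated; the paper instead shows that the presence of a 33-cyclic pattern forces, via linkage and 2-compatibility, a $3\times 3$ block of entries in which no permutation repeats, so nothing can reach multiplicity four. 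Second, for sufficiency you give no argument at all, only two unspecified alternatives (``careful case analysis'' or ``a cyclic-order invariant''). The paper's actual proof is a structured case analysis built on a hierarchy of auxiliary configurations (22-squares, 22-rectangles, 22-halfsquares) together with the key reduction Lemma~\ref{lem:mult3done} showing that three occurrences of some permutation already force four; none of these ingredients appear in your sketch, and the ``propagate a choice one coordinate at a time'' idea you outline does not by itself explain why the absence of a 33-cyclic pattern guarantees consistency.
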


In \cref{fig:3d2trivoro}, if we start from the vertex $1'$ and move clockwise, we get $123,213,321$, which is acyclic. However, if we swap the entries $1$ and $3$ on the edge $1'3'$ to get $123,213,123$, this is not acyclic, as the pattern $13,13,13$ appears as we traverse clockwise.

A \newword{triangulation} of a polytope is a polyhedral decomposition of the polytope into simplices intersecting face-to-face. The \newword{Cayley trick} gives a bijection between the triangulations of $\Delta_{n-1} \times \Delta_{d-1}$ and the fine mixed subdivisions of $n\Delta_{d-1}$, moreover, each simplex inside such a triangulation can be encoded by a spanning tree of the complete bipartite graph $K_{n,d}$: label the left vertices with $[n]$ and the right vertices with $[d]$, and encode the vertex $(e_i,e'_j)$ of the simplex as the edge $(i,j)$. See \cref{fig:cayley7}.

\begin{figure}
\centering
\begin{tikzpicture}[
  front/.style={line width=1pt,blue!62!black},
  back/.style={line width=1pt,blue!62!black,opacity=0.4},
  vert/.style={line width=1pt,brown!72!black,opacity=0.75},
  diag/.style={densely dotted,line width=0.9pt,black!65},
  slicel/.style={line width=1pt,green!55!black},
  cell/.style={line width=0.9pt,black!78},
  mk/.style={line width=0.9pt,cyan!58!black,fill=cyan!14},
  mkpt/.style={cyan!62!black},
  plus/.style={font=\small},
  vlab/.style={font=\normalsize\bfseries,blue!62!black},
  clab/.style={font=\scriptsize},
  dot/.style={circle,fill=blue!62!black,inner sep=1.1pt},
  xdot/.style={circle,fill=green!50!black,inner sep=0.9pt}]
\begin{scope}
\draw[back] (1.350,1.150)--(3.350,1.150)--(1.350,3.150)--cycle;
\draw[diag] (0.000,0.000)--(1.350,1.150);
\draw[diag] (2.000,0.000)--(1.350,1.150);
\draw[diag] (0.000,2.000)--(1.350,1.150);
\draw[diag] (2.000,0.000)--(3.350,1.150);
\draw[diag] (0.000,2.000)--(3.350,1.150);
\draw[diag] (0.000,2.000)--(1.350,3.150);
\draw[vert] (0.000,0.000)--(1.350,1.150);
\draw[vert] (2.000,0.000)--(3.350,1.150);
\draw[vert] (0.000,2.000)--(1.350,3.150);
\fill[green!55!black,fill opacity=0.10] (0.675,0.575) -- (1.675,0.575) -- (0.675,1.575) -- cycle;
\fill[green!55!black,fill opacity=0.10] (1.675,0.575) -- (2.675,0.575) -- (1.675,1.575) -- (0.675,1.575) -- cycle;
\fill[green!55!black,fill opacity=0.10] (0.675,1.575) -- (1.675,1.575) -- (0.675,2.575) -- cycle;
\draw[slicel] (0.675,0.575) -- (1.675,0.575) -- (0.675,1.575) -- cycle;
\draw[slicel] (1.675,0.575) -- (2.675,0.575) -- (1.675,1.575) -- (0.675,1.575) -- cycle;
\draw[slicel] (0.675,1.575) -- (1.675,1.575) -- (0.675,2.575) -- cycle;
\node[xdot] at (0.675,0.575) {};
\node[xdot] at (1.675,0.575) {};
\node[xdot] at (0.675,1.575) {};
\node[xdot] at (2.675,0.575) {};
\node[xdot] at (1.675,1.575) {};
\node[xdot] at (0.675,2.575) {};
\node[clab,green!35!black] at (1.01,0.91) {$123,1$};
\node[clab,green!35!black] at (1.68,1.07) {$23,12$};
\node[clab,green!35!black] at (1.01,1.91) {$3,123$};
\draw[front] (0.000,0.000)--(2.000,0.000)--(0.000,2.000)--cycle;
\node[dot] at (0.000,0.000) {};
\node[dot,opacity=0.55] at (1.350,1.150) {};
\node[dot] at (2.000,0.000) {};
\node[dot,opacity=0.55] at (3.350,1.150) {};
\node[dot] at (0.000,2.000) {};
\node[dot,opacity=0.55] at (1.350,3.150) {};
\node[vlab,below left] at (0.000,0.000) {$1$};
\node[vlab,below right] at (2.000,0.000) {$2$};
\node[vlab,above left] at (0.000,2.000) {$3$};
\node[clab] at (1.8,-0.7) {$\Delta^1\times\Delta^2$ triangulated};
\end{scope}
\begin{scope}[shift={(5.0,-0.2)},scale=1.5]
  \coordinate (V1) at (0,0); \coordinate (V2) at (2,0); \coordinate (V3) at (0,2);
  \fill[black!4] (0,0)--(1,0)--(0,1)--cycle;
  \fill[black!4] (0,1)--(1,0)--(2,0)--(1,1)--cycle;
  \fill[black!4] (0,1)--(1,1)--(0,2)--cycle;
  \draw[cell] (0,0)--(1,0)--(0,1)--cycle;
  \draw[cell] (0,1)--(1,0)--(2,0)--(1,1)--cycle;
  \draw[cell] (0,1)--(1,1)--(0,2)--cycle;
  \draw[line width=1pt,black] (V1)--(V2)--(V3)--cycle;
  \node[vlab,below left] at (V1) {$1$};
  \node[vlab,below right] at (V2) {$2$};
  \node[vlab,above left] at (V3) {$3$};
  \node[clab] at (0.33,0.33) {$123,1$};
  \node[clab] at (1.00,0.50) {$23,12$};
  \node[clab] at (0.33,1.33) {$3,123$};
\end{scope}
\begin{scope}[shift={(9.3, 2.3)}]
  \node[clab,left] at (0,0.25) {$3,123 =$};
  \fill[mkpt] (0.2,0.25) circle (1.7pt);
  \node[plus] at (0.55,0.25) {$+$};
  \draw[mk] (0.9,-0.05)--(1.7,-0.05)--(0.9,0.75)--cycle;
\end{scope}
\begin{scope}[shift={(9.3, 0.9)}]
  \node[clab,left] at (0,0.25) {$23,12 =$};
  \draw[mk] (0.2,-0.05)--(0.2,0.75);
  \node[plus] at (0.55,0.25) {$+$};
  \draw[mk] (0.9,0.25)--(1.7,0.25);
\end{scope}
\begin{scope}[shift={(9.3, -0.5)}]
  \node[clab,left] at (0,0.25) {$123,1 =$};
  \draw[mk] (0.2,-0.05)--(1.0,-0.05)--(0.2,0.75)--cycle;
  \node[plus] at (1.35,0.25) {$+$};
  \fill[mkpt] (1.75,0.25) circle (1.7pt);
\end{scope}
\end{tikzpicture}
\caption{How the triangulation relates to the fine mixed subdivisions of $2\Delta_2$.}
\label{fig:cayley7}
\end{figure}

\begin{conjecture}[Spread out simplices \cite{zbMATH05192027}]
\label{conj:spread}
A collection of $n$ unit simplices in $n\Delta_{d-1}$ can be extended to a fine mixed subdivision if and only if it is spread out.
\end{conjecture}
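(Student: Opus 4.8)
The plan is to separate the biconditional into its two implications, dispatch the forward direction (a collection coming from a tiling must be spread out) as the routine part, and concentrate the effort on the reverse direction, which carries the real content. For the forward direction, suppose the $n$ unit simplices are the full-dimensional cells of a fine mixed subdivision $\mathcal{S}$ of $n\Delta_{d-1}$, and fix any side-$k$ subsimplex $R \cong k\Delta_{d-1}$. I would argue that $\mathcal{S}$ induces, on the summands whose full simplex lies inside $R$, a fine mixed subdivision of $k\Delta_{d-1}$; since such a subdivision has exactly one full simplex for each of its $k$ active summands, at most $k$ of the chosen unit simplices can sit inside $R$. That is precisely the spread-out inequality. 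The only delicate point here is verifying that the induced structure on $R$ really is a fine mixed subdivision, which I would extract from the Cayley trick by passing to the induced subtree structure on $K_{n,d}$.

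For the reverse direction I would induct on the dimension $d$, with base case $d \le 3$ supplied by \cref{thm:2dspread} (equivalently \cref{thm:2dacyc} in its acyclic formulation). Given a spread-out collection in $n\Delta_{d-1}$, restrict it to each of the $d$ facets, each a copy of $n\Delta_{d-2}$. The first sublemma to establish is that every restricted collection is again spread out, one dimension lower; granting this, the inductive hypothesis produces a tiling of each facet. These facet tilings assemble into boundary data, the higher-dimensional analogue of an $S_n$-ensemble, which by construction satisfies the forced-entry and compatibility constraints along the shared ridges where two facet tilings meet.

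The main obstacle is the final gluing step: showing that consistent boundary data arising from a spread-out collection always closes up to a tiling of the interior. This is exactly the general-dimension generalization of the acyclic system theorem, and it does not follow formally from the two-dimensional case. My intended mechanism is to recast the extension problem, through the Cayley trick, as the completion of a partial spanning-tree assignment on $K_{n,d}$ to a full triangulation of $\Delta_{n-1}\times\Delta_{d-1}$, and then to show that the spread-out hypothesis forbids every obstructing cyclic configuration, the all-dimensional analogue of the forbidden pattern appearing in \cref{thm:s4ensemble} (there, the $33$-cyclic pattern for $n=4$). Concretely, I would define an interlacement relation on the unit simplices, prove that spread-out collections are acyclic for it, and then upgrade acyclicity to the existence of a regular (hence realizable) triangulation by a consistent lifting argument. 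Pinning down the correct forbidden pattern in each dimension, and proving that being spread out rules it out, is where I expect essentially all the difficulty to concentrate; the $n=4$ theorem proved in this paper is the first nontrivial case and should serve as the template for the general interlacement-and-lifting argument.
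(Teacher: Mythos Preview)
The statement you are attempting to prove is \cref{conj:spread}, which the paper presents as a \emph{conjecture}, not a theorem; the paper explicitly remarks that ``Conjecture~\ref{conj:spread} is still wide open'' and gives no proof of it. So there is no proof in the paper to compare your attempt against, and what you have written is a research outline rather than a proof.

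Beyond that, your outline has a structural problem in the reverse direction. You correctly isolate the gluing step---extending compatible facet tilings to a tiling of the interior---as the crux, and you propose to handle it by a higher-dimensional analogue of the acyclic system theorem together with an ``interlacement'' acyclicity condition implied by being spread out. But the paper records that \cref{conj:acyc}, the acyclic system conjecture, is \emph{false}: a counterexample for $n=5$, $d=4$ is cited from \cite{Santos13}, and the paper exhibits a minimal counterexample for $n=d=4$. The main theorem of the paper, \cref{thm:s4ensemble}, shows that even in the smallest nontrivial case one needs an additional obstruction (the $33$-cyclic pattern) beyond acyclicity of the edge data, and \cref{rem:whyacychard} explains why such boundary-to-interior extension is qualitatively harder in higher dimension. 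Your plan to ``define an interlacement relation, prove spread-out implies acyclic for it, and lift to a regular triangulation'' is therefore not a proof step but the entire open problem restated: you have not specified the forbidden patterns in general dimension, you have not shown that being spread out excludes them, and the $d=4$ evidence already shows that the naive cyclic condition is insufficient.

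In short, the forward direction is the known easy half (essentially in \cite{zbMATH05192027}); for the reverse direction you have identified where the difficulty lies but have not supplied a mechanism that survives the known counterexamples to the boundary-extension philosophy you invoke.
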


\begin{conjecture}[Acyclic system \cite{ArdilaCeballos13}]
\label{conj:acyc}
A system of permutations comes from a fine mixed subdivision of $n\Delta_{d-1}$ if and only if it is acyclic.
\end{conjecture}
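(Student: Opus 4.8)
The plan is to prove the two directions separately, with the sufficiency direction carrying essentially all of the difficulty. I would first record that the acyclicity condition is \emph{triangle-local}: fixing any two strands $a,b \in [n]$, orienting each edge $i'j'$ of $\Delta_{d-1}$ according to whether $a$ precedes $b$ in the permutation read from $i'$ toward $j'$ produces a tournament $D_{ab}$ on $\{1',\dots,d'\}$, and the system is acyclic exactly when every $D_{ab}$ is transitive. Since a tournament contains a directed cycle if and only if it contains a directed $3$-cycle, transitivity of $D_{ab}$ is equivalent to the absence of a directed $3$-cycle on every triple $\{i',j',k'\}$, i.e.\ to acyclicity of the restriction to each triangular $2$-face in the sense of \cref{thm:2dacyc} (the $3$-cycle is precisely the forbidden $2$-dimensional pattern). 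Granting this reduction, the necessity direction is immediate: if the system comes from a mixed subdivision of $n\Delta_{d-1}$, restricting that subdivision to any $2$-face gives a mixed subdivision of the corresponding $n\Delta_2$, which is acyclic by \cref{thm:2dacyc}, so every triangular restriction is acyclic and hence so is the whole system.

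For sufficiency I would induct on the dimension $d$, with the base case $d=3$ supplied by \cref{thm:2dacyc}. The strategy is to build a fine mixed subdivision of the full boundary sphere $\partial(n\Delta_{d-1})$ by extending over the faces of $\Delta_{d-1}$ in increasing dimension, and then to fill the interior. Concretely, having fixed a consistent fine mixed subdivision on the $(r-1)$-skeleton, I extend it one face at a time to each $r$-face $n\Delta_r$: its boundary restriction is acyclic by triangle-locality, and I must extend it across the interior of $n\Delta_r$ while inducing \emph{exactly} that boundary. Running $r$ up to $d-1$ assembles a subdivision of the entire boundary, and a final application of the same extension step with $r=d-1$ fills $n\Delta_{d-1}$ itself. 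Throughout it is natural to pass through the Cayley trick and work with triangulations of $\Delta_{n-1}\times\Delta_{d-1}$, where the prescribed-boundary extension becomes the completion of a triangulation of a boundary region of the product of simplices; in this language the machinery of lifting (regular) subdivisions and of tropical oriented matroids becomes available, and \cref{conj:spread} would, if established, give an equivalent reformulation in terms of spread-out unit simplices that might be easier to induct on.

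The heart of the argument, and the step I expect to be the main obstacle, is the resulting boundary-extension lemma: a fine mixed subdivision of $\partial(n\Delta_{r})$ whose $2$-face restrictions are all acyclic extends to a fine mixed subdivision of $n\Delta_r$ inducing the given boundary. The difficulty is that a subdivision is \emph{not} determined by its boundary data --- already in dimension two an acyclic system typically admits several distinct tilings --- so extensions chosen independently on separate faces need not glue along shared ridges, and the standard pulling, placing, and lifting constructions that fill a convex polytope from its boundary do not obviously respect either the fineness requirement or the prescribed boundary. The real task is thus to show that acyclicity is \emph{precisely} the obstruction whose vanishing guarantees a compatible global filling. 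The instance $n=d=4$ settled in this paper, where \cref{thm:s4ensemble} isolates the $33$-cyclic pattern as the unique obstruction for $S_4$-ensembles, is the first nontrivial case and should indicate which global cyclic configuration --- beyond the purely triangle-local data --- must be ruled out for the extension to succeed in general.
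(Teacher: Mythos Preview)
The statement you are attempting to prove is a \emph{conjecture}, and the paper does not prove it --- on the contrary, immediately after stating it the paper notes that a counterexample was found in \cite{Santos13} for $n=5$, $d=4$, and then exhibits a minimal counterexample for $n=d=4$ (the system $1234,1234,1423,2413,4213,1423$ on the edges of $\Delta_3$; see \cref{tab:acyccounter}). So the sufficiency direction you are trying to establish is simply false, and any purported proof of it must contain an error.

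The failure occurs exactly where you flag it: your boundary-extension lemma is not true. An acyclic system of permutations on the edges of $\Delta_{d-1}$ need not extend to a fine mixed subdivision of $n\Delta_{d-1}$, already for $n=d=4$. In the paper's counterexample the edge data are acyclic, yet the induced $3$-by-$3$ matchings on the facets cannot even be made to satisfy linkage among themselves, so no mixed subdivision of the interior exists. Your own final paragraph essentially concedes this --- you observe that \cref{thm:s4ensemble} isolates the $33$-cyclic pattern as an additional obstruction ``beyond the purely triangle-local data'' --- but you do not draw the consequence: the presence of such an obstruction not implied by edge-acyclicity means the conjecture is false, not that the proof needs one more idea. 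The correct takeaway from the paper is that \cref{conj:acyc} must be replaced by a stronger hypothesis (for $d=4$ this is the absence of $33$-cyclic patterns together with linkage, as in \cref{cor:4d3mixed} and \cref{conj:3dimconj}), not that the inductive filling you sketch can be pushed through.
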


\cref{conj:spread}, the higher-dimensional extension of \cref{thm:2dspread}, is still wide open, whereas a counterexample for \cref{conj:acyc} was found in \cite{Santos13} for $n=5$ and $d=4$. A minimal counterexample will be provided in \cref{prop:acyccounter}, using the tools that we introduce next.

\begin{remark}
\label{rem:whyacychard}
One insight for why \cref{conj:acyc} is so hard is that there is too big a dimensional gap between what is given and what we have to fill: in \cref{thm:2dacyc} the $1$-dimensional boundary data fills a $2$-dimensional interior, but in \cref{conj:acyc} the same $1$-dimensional data must fill interiors of dimension $2, \dots, d-1$.
\end{remark}

\subsection{Matching ensembles}
The main tool we use is a collection of matchings that encodes the fine mixed subdivision of $n\Delta_{d-1}$, called \newword{matching ensembles} \cite{OYoo15}, \cite{zbMATH07399067} or \newword{matching fields with linkage} \cite{LohoSmith20}. Matching ensembles are one of several matroid-like encodings of these subdivisions, alongside tope arrangements \cite{LohoSmith20} and trianguloids \cite{GalashinNenashevPostnikov23}, all governed by variants of the linkage axiom, and Yao \cite{Yao25} gives a unified account of their cryptomorphisms.

\begin{figure}
\centering
\begin{tikzpicture}[scale=1.25,
  Ldot/.style={circle,fill=blue!55!black,inner sep=1.4pt},
  Rdot/.style={circle,fill=red!60!black,inner sep=1.4pt},
  tedge/.style={line width=0.9pt,black!78},
  arr/.style={-{Stealth[length=4pt]},gray!65,line width=0.7pt,shorten >=3pt,shorten <=3pt},
  clab/.style={font=\scriptsize}]
\fill[black!5] (1.5,0.866) -- (2.0,0.0) -- (3.0,0.0) -- (2.5,0.866) -- cycle;
\fill[black!5] (0.5,0.866) -- (0.0,0.0) -- (1.0,0.0) -- (1.5,0.866) -- cycle;
\fill[blue!9] (1.0,0.0) -- (2.0,0.0) -- (1.5,0.866) -- cycle;
\fill[blue!9] (1.5,0.866) -- (2.5,0.866) -- (2.0,1.732) -- cycle;
\fill[black!5] (1.0,1.732) -- (0.5,0.866) -- (1.5,0.866) -- (2.0,1.732) -- cycle;
\fill[blue!9] (1.0,1.732) -- (2.0,1.732) -- (1.5,2.598) -- cycle;
\draw[black!55,thin] (1.5,0.866) -- (2.0,0.0) -- (3.0,0.0) -- (2.5,0.866) -- cycle;
\draw[black!55,thin] (0.5,0.866) -- (0.0,0.0) -- (1.0,0.0) -- (1.5,0.866) -- cycle;
\draw[black!55,thin] (1.0,0.0) -- (2.0,0.0) -- (1.5,0.866) -- cycle;
\draw[black!55,thin] (1.5,0.866) -- (2.5,0.866) -- (2.0,1.732) -- cycle;
\draw[black!55,thin] (1.0,1.732) -- (0.5,0.866) -- (1.5,0.866) -- (2.0,1.732) -- cycle;
\draw[black!55,thin] (1.0,1.732) -- (2.0,1.732) -- (1.5,2.598) -- cycle;
\draw[black!75,thick] (0,0) -- (3,0) -- (1.5,2.598) -- cycle;
\node[below left] at (0,0) {$1'$};
\node[above] at (1.5,2.598) {$2'$};
\node[below right] at (3,0) {$3'$};
\node[clab] at (2.25,0.43) {$23,13,3$};
\node[clab] at (0.75,0.43) {$12,1,13$};
\node[clab] at (1.50,0.29) {$123,1,3$};
\node[clab] at (2.00,1.15) {$2,123,3$};
\node[clab] at (1.25,1.30) {$2,12,13$};
\node[clab] at (1.50,2.02) {$2,2,123$};
\begin{scope}[shift={(3.5,-0.5)}]
  \coordinate (G0) at (0.42,0.5);
  \node[Ldot] (G0L1) at (0,1.0) {};
  \node[Rdot] (G0R1) at (0.85,1.0) {};
  \node[Ldot] (G0L2) at (0,0.5) {};
  \node[Rdot] (G0R2) at (0.85,0.5) {};
  \node[Ldot] (G0L3) at (0,0.0) {};
  \node[Rdot] (G0R3) at (0.85,0.0) {};
  \node[font=\tiny,left=0pt] at (G0L1) {$1$};
  \node[font=\tiny,left=0pt] at (G0L2) {$2$};
  \node[font=\tiny,left=0pt] at (G0L3) {$3$};
  \node[font=\tiny,right=0pt] at (G0R1) {$1'$};
  \node[font=\tiny,right=0pt] at (G0R2) {$2'$};
  \node[font=\tiny,right=0pt] at (G0R3) {$3'$};
  \draw[tedge] (G0L1) -- (G0R2);
  \draw[tedge] (G0L1) -- (G0R3);
  \draw[tedge] (G0L2) -- (G0R1);
  \draw[tedge] (G0L2) -- (G0R3);
  \draw[tedge] (G0L3) -- (G0R3);
\end{scope}
\begin{scope}[shift={(-2.7,-1.2)}]
  \coordinate (G1) at (0.42,0.5);
  \node[Ldot] (G1L1) at (0,1.0) {};
  \node[Rdot] (G1R1) at (0.85,1.0) {};
  \node[Ldot] (G1L2) at (0,0.5) {};
  \node[Rdot] (G1R2) at (0.85,0.5) {};
  \node[Ldot] (G1L3) at (0,0.0) {};
  \node[Rdot] (G1R3) at (0.85,0.0) {};
  \node[font=\tiny,left=0pt] at (G1L1) {$1$};
  \node[font=\tiny,left=0pt] at (G1L2) {$2$};
  \node[font=\tiny,left=0pt] at (G1L3) {$3$};
  \node[font=\tiny,right=0pt] at (G1R1) {$1'$};
  \node[font=\tiny,right=0pt] at (G1R2) {$2'$};
  \node[font=\tiny,right=0pt] at (G1R3) {$3'$};
  \draw[tedge] (G1L1) -- (G1R1);
  \draw[tedge] (G1L1) -- (G1R2);
  \draw[tedge] (G1L2) -- (G1R1);
  \draw[tedge] (G1L3) -- (G1R1);
  \draw[tedge] (G1L3) -- (G1R3);
\end{scope}
\begin{scope}[shift={(0.1,-2.3)}]
  \coordinate (G2) at (0.42,0.5);
  \node[Ldot] (G2L1) at (0,1.0) {};
  \node[Rdot] (G2R1) at (0.85,1.0) {};
  \node[Ldot] (G2L2) at (0,0.5) {};
  \node[Rdot] (G2R2) at (0.85,0.5) {};
  \node[Ldot] (G2L3) at (0,0.0) {};
  \node[Rdot] (G2R3) at (0.85,0.0) {};
  \node[font=\tiny,left=0pt] at (G2L1) {$1$};
  \node[font=\tiny,left=0pt] at (G2L2) {$2$};
  \node[font=\tiny,left=0pt] at (G2L3) {$3$};
  \node[font=\tiny,right=0pt] at (G2R1) {$1'$};
  \node[font=\tiny,right=0pt] at (G2R2) {$2'$};
  \node[font=\tiny,right=0pt] at (G2R3) {$3'$};
  \draw[tedge] (G2L1) -- (G2R1);
  \draw[tedge] (G2L1) -- (G2R2);
  \draw[tedge] (G2L1) -- (G2R3);
  \draw[tedge] (G2L2) -- (G2R1);
  \draw[tedge] (G2L3) -- (G2R3);
\end{scope}
\begin{scope}[shift={(3.6,1.7)}]
  \coordinate (G3) at (0.42,0.5);
  \node[Ldot] (G3L1) at (0,1.0) {};
  \node[Rdot] (G3R1) at (0.85,1.0) {};
  \node[Ldot] (G3L2) at (0,0.5) {};
  \node[Rdot] (G3R2) at (0.85,0.5) {};
  \node[Ldot] (G3L3) at (0,0.0) {};
  \node[Rdot] (G3R3) at (0.85,0.0) {};
  \node[font=\tiny,left=0pt] at (G3L1) {$1$};
  \node[font=\tiny,left=0pt] at (G3L2) {$2$};
  \node[font=\tiny,left=0pt] at (G3L3) {$3$};
  \node[font=\tiny,right=0pt] at (G3R1) {$1'$};
  \node[font=\tiny,right=0pt] at (G3R2) {$2'$};
  \node[font=\tiny,right=0pt] at (G3R3) {$3'$};
  \draw[tedge] (G3L1) -- (G3R2);
  \draw[tedge] (G3L2) -- (G3R1);
  \draw[tedge] (G3L2) -- (G3R2);
  \draw[tedge] (G3L2) -- (G3R3);
  \draw[tedge] (G3L3) -- (G3R3);
\end{scope}
\begin{scope}[shift={(-3.0,1.0)}]
  \coordinate (G4) at (0.42,0.5);
  \node[Ldot] (G4L1) at (0,1.0) {};
  \node[Rdot] (G4R1) at (0.85,1.0) {};
  \node[Ldot] (G4L2) at (0,0.5) {};
  \node[Rdot] (G4R2) at (0.85,0.5) {};
  \node[Ldot] (G4L3) at (0,0.0) {};
  \node[Rdot] (G4R3) at (0.85,0.0) {};
  \node[font=\tiny,left=0pt] at (G4L1) {$1$};
  \node[font=\tiny,left=0pt] at (G4L2) {$2$};
  \node[font=\tiny,left=0pt] at (G4L3) {$3$};
  \node[font=\tiny,right=0pt] at (G4R1) {$1'$};
  \node[font=\tiny,right=0pt] at (G4R2) {$2'$};
  \node[font=\tiny,right=0pt] at (G4R3) {$3'$};
  \draw[tedge] (G4L1) -- (G4R2);
  \draw[tedge] (G4L2) -- (G4R1);
  \draw[tedge] (G4L2) -- (G4R2);
  \draw[tedge] (G4L3) -- (G4R1);
  \draw[tedge] (G4L3) -- (G4R3);
\end{scope}
\begin{scope}[shift={(0.7,3.3)}]
  \coordinate (G5) at (0.42,0.5);
  \node[Ldot] (G5L1) at (0,1.0) {};
  \node[Rdot] (G5R1) at (0.85,1.0) {};
  \node[Ldot] (G5L2) at (0,0.5) {};
  \node[Rdot] (G5R2) at (0.85,0.5) {};
  \node[Ldot] (G5L3) at (0,0.0) {};
  \node[Rdot] (G5R3) at (0.85,0.0) {};
  \node[font=\tiny,left=0pt] at (G5L1) {$1$};
  \node[font=\tiny,left=0pt] at (G5L2) {$2$};
  \node[font=\tiny,left=0pt] at (G5L3) {$3$};
  \node[font=\tiny,right=0pt] at (G5R1) {$1'$};
  \node[font=\tiny,right=0pt] at (G5R2) {$2'$};
  \node[font=\tiny,right=0pt] at (G5R3) {$3'$};
  \draw[tedge] (G5L1) -- (G5R2);
  \draw[tedge] (G5L2) -- (G5R2);
  \draw[tedge] (G5L3) -- (G5R1);
  \draw[tedge] (G5L3) -- (G5R2);
  \draw[tedge] (G5L3) -- (G5R3);
\end{scope}
\draw[arr] (G0) to[bend left=6] (2.25,0.43);
\draw[arr] (G1) to[bend left=6] (0.75,0.43);
\draw[arr] (G2) to[bend left=6] (1.50,0.29);
\draw[arr] (G3) to[bend left=6] (2.00,1.15);
\draw[arr] (G4) to[bend left=6] (1.25,1.30);
\draw[arr] (G5) to[bend left=6] (1.50,2.02);
\end{tikzpicture}
\begin{tikzpicture}[
  mdot/.style={circle,fill=black,inner sep=0.9pt},
  medge/.style={line width=0.7pt,black!80},
  hd/.style={font=\footnotesize\bfseries},
  ax/.style={font=\scriptsize}]
\node[hd] at (1.0,0.9) {size 1};
\node[ax] at (0.55,0.5) {$B=1$};
\node[ax] at (1.55,0.5) {$B=2$};
\node[ax] at (2.55,0.5) {$B=3$};
\node[ax] at (-0.55,-0.15) {$A=1$};
\begin{scope}[shift={(0.30,0.00)}]
  \node[mdot] (S100a1) at (0.00,0.00) {};
  \node[mdot] (S100b1) at (0.62,0.00) {};
  \draw[medge] (S100a1) -- (S100b1);
\end{scope}
\begin{scope}[shift={(1.30,0.00)}]
  \node[mdot] (S101a1) at (0.00,0.00) {};
  \node[mdot] (S101b2) at (0.62,0.00) {};
  \draw[medge] (S101a1) -- (S101b2);
\end{scope}
\begin{scope}[shift={(2.30,0.00)}]
  \node[mdot] (S102a1) at (0.00,0.00) {};
  \node[mdot] (S102b3) at (0.62,0.00) {};
  \draw[medge] (S102a1) -- (S102b3);
\end{scope}
\node[ax] at (-0.55,-1.0999999999999999) {$A=2$};
\begin{scope}[shift={(0.30,-0.95)}]
  \node[mdot] (S110a2) at (0.00,0.00) {};
  \node[mdot] (S110b1) at (0.62,0.00) {};
  \draw[medge] (S110a2) -- (S110b1);
\end{scope}
\begin{scope}[shift={(1.30,-0.95)}]
  \node[mdot] (S111a2) at (0.00,0.00) {};
  \node[mdot] (S111b2) at (0.62,0.00) {};
  \draw[medge] (S111a2) -- (S111b2);
\end{scope}
\begin{scope}[shift={(2.30,-0.95)}]
  \node[mdot] (S112a2) at (0.00,0.00) {};
  \node[mdot] (S112b3) at (0.62,0.00) {};
  \draw[medge] (S112a2) -- (S112b3);
\end{scope}
\node[ax] at (-0.55,-2.05) {$A=3$};
\begin{scope}[shift={(0.30,-1.90)}]
  \node[mdot] (S120a3) at (0.00,0.00) {};
  \node[mdot] (S120b1) at (0.62,0.00) {};
  \draw[medge] (S120a3) -- (S120b1);
\end{scope}
\begin{scope}[shift={(1.30,-1.90)}]
  \node[mdot] (S121a3) at (0.00,0.00) {};
  \node[mdot] (S121b2) at (0.62,0.00) {};
  \draw[medge] (S121a3) -- (S121b2);
\end{scope}
\begin{scope}[shift={(2.30,-1.90)}]
  \node[mdot] (S122a3) at (0.00,0.00) {};
  \node[mdot] (S122b3) at (0.62,0.00) {};
  \draw[medge] (S122a3) -- (S122b3);
\end{scope}
\node[hd] at (5.1,0.9) {size 2};
\node[ax] at (4.55,0.5) {$B=12$};
\node[ax] at (5.55,0.5) {$B=13$};
\node[ax] at (6.55,0.5) {$B=23$};
\node[ax] at (3.4,-0.15) {$A=12$};
\begin{scope}[shift={(4.30,0.00)}]
  \node[mdot] (S200a1) at (0.00,0.00) {};
  \node[mdot] (S200a2) at (0.00,-0.32) {};
  \node[mdot] (S200b1) at (0.62,0.00) {};
  \node[mdot] (S200b2) at (0.62,-0.32) {};
  \draw[medge] (S200a1) -- (S200b2);
  \draw[medge] (S200a2) -- (S200b1);
\end{scope}
\begin{scope}[shift={(5.30,0.00)}]
  \node[mdot] (S201a1) at (0.00,0.00) {};
  \node[mdot] (S201a2) at (0.00,-0.32) {};
  \node[mdot] (S201b1) at (0.62,0.00) {};
  \node[mdot] (S201b3) at (0.62,-0.32) {};
  \draw[medge] (S201a1) -- (S201b3);
  \draw[medge] (S201a2) -- (S201b1);
\end{scope}
\begin{scope}[shift={(6.30,0.00)}]
  \node[mdot] (S202a1) at (0.00,0.00) {};
  \node[mdot] (S202a2) at (0.00,-0.32) {};
  \node[mdot] (S202b2) at (0.62,0.00) {};
  \node[mdot] (S202b3) at (0.62,-0.32) {};
  \draw[medge] (S202a1) -- (S202b2);
  \draw[medge] (S202a2) -- (S202b3);
\end{scope}
\node[ax] at (3.4,-1.0999999999999999) {$A=13$};
\begin{scope}[shift={(4.30,-0.95)}]
  \node[mdot] (S210a1) at (0.00,0.00) {};
  \node[mdot] (S210a3) at (0.00,-0.32) {};
  \node[mdot] (S210b1) at (0.62,0.00) {};
  \node[mdot] (S210b2) at (0.62,-0.32) {};
  \draw[medge] (S210a1) -- (S210b2);
  \draw[medge] (S210a3) -- (S210b1);
\end{scope}
\begin{scope}[shift={(5.30,-0.95)}]
  \node[mdot] (S211a1) at (0.00,0.00) {};
  \node[mdot] (S211a3) at (0.00,-0.32) {};
  \node[mdot] (S211b1) at (0.62,0.00) {};
  \node[mdot] (S211b3) at (0.62,-0.32) {};
  \draw[medge] (S211a1) -- (S211b1);
  \draw[medge] (S211a3) -- (S211b3);
\end{scope}
\begin{scope}[shift={(6.30,-0.95)}]
  \node[mdot] (S212a1) at (0.00,0.00) {};
  \node[mdot] (S212a3) at (0.00,-0.32) {};
  \node[mdot] (S212b2) at (0.62,0.00) {};
  \node[mdot] (S212b3) at (0.62,-0.32) {};
  \draw[medge] (S212a1) -- (S212b2);
  \draw[medge] (S212a3) -- (S212b3);
\end{scope}
\node[ax] at (3.4,-2.05) {$A=23$};
\begin{scope}[shift={(4.30,-1.90)}]
  \node[mdot] (S220a2) at (0.00,0.00) {};
  \node[mdot] (S220a3) at (0.00,-0.32) {};
  \node[mdot] (S220b1) at (0.62,0.00) {};
  \node[mdot] (S220b2) at (0.62,-0.32) {};
  \draw[medge] (S220a2) -- (S220b2);
  \draw[medge] (S220a3) -- (S220b1);
\end{scope}
\begin{scope}[shift={(5.30,-1.90)}]
  \node[mdot] (S221a2) at (0.00,0.00) {};
  \node[mdot] (S221a3) at (0.00,-0.32) {};
  \node[mdot] (S221b1) at (0.62,0.00) {};
  \node[mdot] (S221b3) at (0.62,-0.32) {};
  \draw[medge] (S221a2) -- (S221b1);
  \draw[medge] (S221a3) -- (S221b3);
\end{scope}
\begin{scope}[shift={(6.30,-1.90)}]
  \node[mdot] (S222a2) at (0.00,0.00) {};
  \node[mdot] (S222a3) at (0.00,-0.32) {};
  \node[mdot] (S222b2) at (0.62,0.00) {};
  \node[mdot] (S222b3) at (0.62,-0.32) {};
  \draw[medge] (S222a2) -- (S222b2);
  \draw[medge] (S222a3) -- (S222b3);
\end{scope}
\node[hd] at (8.7,0.9) {size 3};
\node[ax] at (8.5,0.45) {$A=B=123$};
\begin{scope}[shift={(8.50,0.00)}]
  \node[mdot] (S3a1) at (0.00,0.00) {};
  \node[mdot] (S3a2) at (0.00,-0.34) {};
  \node[mdot] (S3a3) at (0.00,-0.68) {};
  \node[mdot] (S3b1) at (0.62,0.00) {};
  \node[mdot] (S3b2) at (0.62,-0.34) {};
  \node[mdot] (S3b3) at (0.62,-0.68) {};
  \draw[medge] (S3a1) -- (S3b2);
  \draw[medge] (S3a2) -- (S3b1);
  \draw[medge] (S3a3) -- (S3b3);
\end{scope}
\node[ax,align=center] at (8.7,-1.3) {interior cell\\ $w=213$};
\end{tikzpicture}
\caption{Spanning trees describing the cells of a fine mixed subdivision of $3\Delta_2$, and the collection of matchings obtained from them.}
\label{fig:sptrees}
\end{figure}

Collecting, over all spanning trees of a fixed fine mixed subdivision, the matchings that appear as a subgraph of at least one tree yields a collection of matchings of size $\leq \min(n,d)$ between the left vertex set $[n]$ and the right vertex set $[d]$, with the following three properties.

\begin{definition}
\label{def:ME}
Fix $n,d$ and look at the matchings within $K_{n,d}$. We say that a collection of matchings between all $A \subset [n], B \subset [d]$ with $|A| = |B|$ forms a matching ensemble if the following properties are satisfied.
\begin{enumerate}
\item For fixed $A$ and $B$, the matching between them is unique.
\item The submatching of a matching must also be within the collection.
\item (linkage) For any matching $M$ in the collection and any vertex $v$ outside its left vertex set (left linkage) or outside its right vertex set (right linkage), the collection contains a matching obtained from $M$ by replacing a single edge $(i,j)$ of $M$ with the edge $(v,j)$ (left linkage) or with the edge $(i,v)$ (right linkage).
\end{enumerate}
\end{definition}

For example, in \cref{fig:sptrees} we can collect $(1,2'),(2,1')$ from the top rhombus, while the matching $(1,1'),(2,2')$ never appears. Starting from $(1,2'),(2,1')$ and picking the vertex $3$ on the left, we find the matching $(1,2'),(3,1')$, as the linkage property guarantees.

\begin{remark}
\label{rem:stronglinkage}
The linkage condition in \cref{def:ME}(3) is the weak, single-exchange form. For matching fields, Bernstein and Zelevinsky showed it to be equivalent to the strong form in which, for every $A \subseteq [n]$ and $B \subseteq [d]$ with $|A| = |B| + 1$, the union of the matchings between $A \setminus \{a\}$ and $B$ over all $a \in A$ is a spanning tree whose $B$-side degrees all equal $2$ (the \newword{linkage covector}), and dually \cite[Theorem 2]{BernsteinZelevinsky93}, see \cite[Theorem 2.28]{Yao25} for five equivalent formulations. Yao defines matching ensembles directly by these covector axioms \cite[Definition 2.33]{Yao25}, and the cryptomorphism with pointed matching fields \cite[Theorem 2.34]{Yao25} shows the two axiom systems define the same objects, so the linkage condition of \cref{def:ME} may be replaced by the covector form at no cost (the submatching and linkage conditions can even be merged into a single extended covector axiom, \cite[Remark 2.35]{Yao25}). We caution that this equivalence uses matchings of all sizes together with the submatching condition. An $S_n$-ensemble carries only the maximal layer (\cref{rem:loosedef}), so for our tables the strong form is a genuine strengthening in general. At $n = 4$ it is recovered from the ensemble axioms by the Line Dichotomy (\cref{cor:linetree}), and for tables of boundary matching ensembles it is automatic (\cref{rem:linetreegeom}).
\end{remark}

\begin{theorem}[{\cite{OYoo15}, \cite[Theorem 2.36]{Yao25}}]
\label{thm:MEsubdiv}
There is a bijection between the matching ensembles of $[n]$ and $[d]$ and the triangulations of $\Delta_{n-1} \times \Delta_{d-1}$.
\end{theorem}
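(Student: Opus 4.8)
The plan is to prove the bijection by exhibiting both directions explicitly and checking they are mutually inverse, with essentially all of the combinatorics controlled by a single geometric observation about barycenters. For a matching $M = \{(i,\pi(i)) : i \in A\}$ between equicardinal sets $A \subset [n]$, $B \subset [d]$ with $|A| = |B| = k$, let $\sigma_M$ be the $(k-1)$-simplex in $\Delta_{n-1}\times\Delta_{d-1}$ with vertices $(e_i, e'_{\pi(i)})$. The key observation is that the barycenter of $\sigma_M$ is $p_{A,B} := \left(\tfrac1k\sum_{i\in A} e_i,\ \tfrac1k\sum_{j\in B} e'_j\right)$, which depends only on $(A,B)$ and not on $M$. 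Conversely, if $\sigma_S$ is the simplex of a tree $S$ in $K_{A,B}$ and $p_{A,B}\in\sigma_S$, then writing $p_{A,B}$ in the (unique, since $S$ is a tree) barycentric coordinates of $\sigma_S$ and stripping leaves shows that its support is exactly a perfect matching $M\subseteq S$: a leaf vertex forces its unique incident edge to carry the full marginal $1/k$, after which that edge's other endpoint is saturated and both may be deleted, so by induction the support is a matching. This dictionary—matchings $\leftrightarrow$ uniform-marginal barycenters, simplices $\leftrightarrow$ spanning trees, faces $\leftrightarrow$ subforests—is what lets us read the ensemble off the geometry.

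\textbf{Forward map.} Given a triangulation $T$ of $\Delta_{n-1}\times\Delta_{d-1}$, let $E(T)$ be the collection of all matchings arising as subgraphs of the spanning trees of the cells of $T$. I would verify the three axioms of \cref{def:ME} as follows. Uniqueness (axiom 1): if two distinct matchings $M\neq M'$ between the same $(A,B)$ both appeared, then $\sigma_M$ and $\sigma_{M'}$ are two distinct $(k-1)$-faces of $T$ sharing the point $p_{A,B}$ in their relative interiors, contradicting that faces of a triangulation meet in a common face; existence holds because $p_{A,B}$ lies in some cell and, by the leaf-stripping claim, in the relative interior of a matching-face. Submatching closure (axiom 2) is immediate, since a subgraph of a subgraph of a spanning tree is again such a subgraph. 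Linkage (axiom 3): given $M$ on $(A,B)$ and $v\notin A$, push the barycenter toward $e_v$ by setting $q_t = \big((1-t)\tfrac1k\sum_{i\in A}e_i + t\,e_v,\ \tfrac1k\sum_{j\in B}e'_j\big)$; for small $t>0$ the point $q_t$ lies in the relative interior of a maximal cell $\sigma'$ of $T$ restricted to $\Delta_{A\cup v}\times\Delta_B$, whose spanning tree $S'$ contains $M$ (let $t\to 0$) and at least one edge $(v,b)$. Swapping $(M^{-1}(b),b)$ for $(v,b)$ yields a matching $M''\subseteq S'$ of the required form, proving left linkage; right linkage is symmetric.

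\textbf{Backward map.} Given a matching ensemble $E$, I would reconstruct a triangulation by declaring its cells to be the simplices $\sigma_S$ for the inclusion-maximal spanning trees $S$ of $K_{n,d}$ all of whose submatchings lie in $E$. Three things must be shown: each such $\sigma_S$ is full-dimensional (a spanning tree of $K_{n,d}$ has $n+d-1$ edges, matching $\dim(\Delta_{n-1}\times\Delta_{d-1})+1$); any two of them intersect in a common face, which follows from the uniqueness axiom applied to the equicardinal pair supporting a shared interior point; and the $\sigma_S$ cover the whole product. Covering is where the linkage axiom does the real work: given a generic point $q=(x,y)$, one must produce a compatible spanning tree whose simplex contains $q$, building it up and using linkage to guarantee that whenever the current support omits a needed vertex there is a compatible extension in that direction. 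Finally I would check the two constructions are mutually inverse: a matching lies in $E(T)$ iff its simplex is a face of $T$ iff it is a submatching of a maximal compatible tree, which is exactly membership in the reconstructed ensemble.

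\textbf{Main obstacle.} The hard part is the covering/fullness step in the backward direction: translating the purely local linkage axiom into the global statement that the compatible simplices tile the entire polytope with no gaps while satisfying the simplicial-complex intersection axioms. I expect the cleanest route is induction on $n+d$: restricting a triangulation to the facet $\{x_n=0\}$ corresponds to discarding all matchings that use the left vertex $n$, and this restriction is itself a matching ensemble on $[n-1],[d]$; the inductive hypothesis handles the facet, while the matchings of $E$ that do use $n$, governed precisely by the $v=n$ instances of linkage, control how the triangulation is extended from the facet into the interior. Making this extension well-defined—showing that linkage supplies exactly one consistent way to cone off, neither over- nor under-determining the new cells—is the crux, and is where the transportation-polytope support analysis from the first paragraph must be pushed furthest.
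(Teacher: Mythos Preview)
The paper does not contain a proof of this theorem: it is quoted from \cite{OYoo15} as background and used as a black box, so there is no argument here to compare your proposal against.

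On its own terms, your forward direction is essentially correct and your barycenter/leaf-stripping dictionary is exactly the right picture. The backward direction, as you yourself flag, is only an outline: declaring the cells to be the simplices of maximal compatible spanning trees and then hoping linkage forces them to tile is the right shape of argument, but the step ``linkage $\Rightarrow$ the compatible simplices cover $\Delta_{n-1}\times\Delta_{d-1}$ and meet face-to-face'' is where all the content lives, and your inductive sketch does not yet supply it. In particular, you have not argued that a \emph{maximal} compatible forest is necessarily a spanning tree (equivalently, that a compatible partial matching/tree can always be extended until it spans), nor that two maximal compatible trees whose simplices overlap must agree on a common face; both of these require iterating linkage together with the uniqueness axiom in a controlled way, and the induction on $n+d$ needs a precise statement of what is being glued along the facet $\{x_n=0\}$. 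If you want to complete this into a self-contained proof, that is the place to invest the work; otherwise, citing \cite{OYoo15} as the present paper does is the intended route.
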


\begin{remark}
\label{rem:restriction}
The restriction of a fine mixed subdivision of $n\Delta_{d-1}$ to the face spanned by the vertices $x_1', \dots, x_k'$ of $\Delta_{d-1}$ is a fine mixed subdivision of $n\Delta_{k-1}$, and under \cref{thm:MEsubdiv} its matching ensemble is the restriction of the original one: the matchings between subsets of $[n]$ and subsets of $\{x_1, \dots, x_k\}$.
In particular, the $k$-by-$k$ matchings encode what happens on the boundary faces of dimension $k-1$, and the $2$-by-$2$ matchings encode what happens at the edges.
\end{remark}

The interpretation of a $2$-by-$2$ matching $\{ax,by\}$ is: at the boundary edge $x'y'$ of $n\Delta_{d-1}$, reading the labels from $x'$ to $y'$, we get $\dots b \dots a \dots$.

\begin{example}
\label{ex:readedge}
Take the boundary information given in \cref{fig:3d2trivoro}. Within the edge $1'3'$, consider the edge labels $2,3$. As we move from $1'$ to $3'$, the labels are read in order $2,3$. So, the matching we get from this is $\{23',31'\}$.
\end{example}

\begin{proposition}
\label{prop:edgeinterpret}
Fix $n$ and $d$. The construction of \cref{ex:readedge} is a bijection between systems of permutations on $n\Delta_{d-1}$ and collections consisting of exactly one $2$-by-$2$ matching between each pair of a $2$-subset $\{a,b\} \subseteq [n]$ and an edge $\{x,y\}$ of $\Delta_{d-1}$, satisfying left linkage. Moreover, under this bijection, the system of permutations is acyclic if and only if the collection also satisfies right linkage.
\end{proposition}
\begin{proof}
Given a system of permutations, for each edge $\{x,y\}$ and each pair $\{a,b\} \subseteq [n]$ we record the relative order of $a$ and $b$ in the permutation on that edge as a $2$-by-$2$ matching, obtaining exactly one matching for each such pair.

We first show that such a collection satisfies left linkage if and only if, for each edge $\{x,y\}$, the matchings involving that edge come from a total order on $[n]$. Fix the edge and consider the tournament on $[n]$ where $a \to b$ whenever $a$ is read before $b$. Suppose the tournament has a $3$-cycle, then the collection contains matchings of the form $\{ay,bx\}, \{by,cx\}, \{cy,ax\}$. Taking $M = \{ay,bx\}$ and $v = c$, left linkage would require the collection to contain $\{ay,cx\}$ or $\{cy,bx\}$, but it contains $\{cy,ax\}$ and $\{by,cx\}$ between those pairs, so left linkage fails. Conversely, if every such tournament is transitive, take any $M = \{ay,bx\}$ and $v = c$. Comparability of $c$ with $a$ and $b$ in the total order guarantees that one of the two replacement matchings is the recorded one.

Next, right linkage. If the system is not acyclic, then for some triangle $x'y'z'$ and labels $a,b$ the pattern $ababab$ appears, corresponding to matchings $\{ay,bx\}$, $\{ax,bz\}$, $\{az,by\}$. Taking $M = \{ay,bx\}$ and $v = z$, right linkage would require $\{ay,bz\}$ or $\{az,bx\}$, but the collection contains the other matchings between those pairs, so right linkage fails. Conversely, if right linkage fails at some $M = \{ay,bx\}$ and $z$, then the matchings between $\{a,b\}$ and $\{x,z\}$, $\{y,z\}$ are forced to be $\{ax,bz\}$ and $\{az,by\}$, which together with $M$ form the cyclic pattern.
\end{proof}

\begin{remark}
\label{rem:edgeinterpretlit}
\cref{prop:edgeinterpret} can also be assembled from the matching-ensemble literature. For a single edge, the closure and right linkage axioms of an $(n,2)$-matching ensemble are automatically satisfied (the singleton matchings are forced), so such an ensemble is exactly a collection of $2$-by-$2$ matchings satisfying left linkage, while fine mixed subdivisions of $n\Delta_1$ are orderings of the $n$ unit segments. The first statement is thus the $d = 2$ case of the correspondence between matching ensembles and fine mixed subdivisions (\cite{OYoo15}, \cite[Theorem 2.36]{Yao25}), applied to each edge of $\Delta_{d-1}$. The second statement identifies acyclicity with the smallest case of the right linkage axiom (\cite[Definition 2.33]{Yao25}): for a pair $\{a,b\}$ and a triangle of $\Delta_{d-1}$, the union of the three $2$-by-$2$ matchings is a tree exactly when the cyclic pattern $ababab$ does not occur, the same condition that reappears as the hexagon axiom of trianguloids \cite{GalashinNenashevPostnikov23} (see \cite[Remark 4.5]{Yao25}). We keep the short self-contained proof above, to help the reader.
\end{remark}

From \cref{prop:edgeinterpret} we can translate \cref{thm:2dacyc} and \cref{conj:acyc}: the acyclic system theorem states that a collection of $2$-by-$2$ matchings between subsets of $[n]$ and $[3]$ satisfying the matching ensemble axioms can always be completed with $3$-by-$3$ matchings to a matching ensemble, and the (false) acyclic system conjecture claims the analogous statement for general $[d]$.

\subsection{Permutation ensembles and boundaries of $n\Delta_{n-1}$}

Given an $(n-1)$-by-$(n-1)$ matching between subsets of $[n]$, we can add the edge connecting the missing entries from each side and think of it as a permutation of $[n]$, with an underline encoding which edge was added. For example, the matching $\{12,34,41\}$ is encoded as $2\uu{3}41$. Under this encoding, the entry $\T_{i,j}$ of a table corresponds to the matching between the position set $[n] \setminus \{j\}$ and the value set $[n] \setminus \{i\}$, by \cref{rem:restriction}, row $i$ of the table records the maximal matchings of the fine mixed subdivision of the facet of $n\Delta_{n-1}$ opposite the vertex $i'$.

\begin{definition}
\label{def:boundaryME}
A \newword{boundary matching ensemble} of $n\Delta_{n-1}$ is a collection of matchings between all pairs $A \subsetneq [n]$, $B \subsetneq [n]$ with $|A| = |B| \geq 1$, satisfying conditions (1)--(3) of \cref{def:ME}. (In condition (3), the replacement matching again has right vertex set of size at most $n-1$, so it stays within the collection's range.)
\end{definition}

By \cref{thm:MEsubdiv} and \cref{rem:restriction}, restricting a boundary matching ensemble to the matchings with right vertex set inside $[n] \setminus \{i\}$ recovers the matching ensemble of a fine mixed subdivision of the facet opposite $i'$, and these facet subdivisions agree on common faces. Conversely, a choice of fine mixed subdivisions of the $n$ facets agreeing on common faces yields a boundary matching ensemble precisely when right linkage holds among the maximal $(n-1)$-by-$(n-1)$ matchings, which is the only condition of \cref{def:ME} involving more than one facet at a time.

\begin{theorem}[Boundary correspondence]
\label{thm:boundary}
Let $E$ be a boundary matching ensemble of $n\Delta_{n-1}$, and let $\T$ be the $n$-by-$n$ table whose entry $\T_{i,j}$ is the underlined permutation encoding the unique matching of $E$ between $[n]\setminus\{j\}$ and $[n]\setminus\{i\}$. Then:
\begin{enumerate}
\item[(a)] $\T$ is an $S_n$-ensemble.
\item[(b)] $E$ extends to a matching ensemble between $[n]$ and $[n]$ (equivalently, by \cref{thm:MEsubdiv} and \cref{rem:restriction}, the fine mixed subdivision of the boundary of $n\Delta_{n-1}$ encoded by $E$ extends to a fine mixed subdivision of $n\Delta_{n-1}$) if and only if some $w \in S_n$ appears $n$ times in $\T$.
\end{enumerate}
\end{theorem}
\begin{proof}
(a) The forced-entry condition is immediate. For linkage, let $M$ be the matching corresponding to $\T_{i,j} = w$, so $M = \{(p, w_p) : p \neq j\}$ and $w_j = i$. Applying left linkage of $E$ to $M$ with $v = j$ produces $M' = M - (j', w_{j'}) + (j, w_{j'})$ for some $j' \neq j$, as underlined permutations, $M'$ is obtained from $w$ by swapping the entries at positions $j$ and $j'$ and moving the underline to $j'$, so $M'$ is the entry $\T_{i,j'}$ and $\T_{i,j} \sim \T_{i,j'}$. Right linkage with $v = i$ similarly yields a linked $\T_{i',j}$. For compatibility, if two entries induced distinct submatchings between the same $(I,J)$ with $J \subsetneq [n]$, then by condition (2) both would belong to $E$, contradicting condition (1).

(b) If $\T_{w_j, j} = w$ for all $j$, let $M_w = \{(p,w_p)\}$ and $E' = E \cup \{M_w\}$. Uniqueness holds since the only new pair is $([n],[n])$. Every proper submatching of $M_w$ is a submatching of $M_w - (j,w_j)$, the matching of $\T_{w_j,j}$, hence lies in $E$, and linkage is vacuous for $M_w$ and unchanged for $E$. By \cref{thm:MEsubdiv}, $E'$ encodes a fine mixed subdivision of $n\Delta_{n-1}$ with boundary data $E$. Conversely, an extension $E'$ contains a unique perfect matching $M_w$, its $(n-1)$-submatchings lie in $E$, and by uniqueness they are the entries $\T_{w_j,j}$, so $w$ appears $n$ times.
\end{proof}

\begin{example}
\label{ex:readtable}
Take the boundary information given in \cref{fig:3d2trivoro}. In \cref{ex:readedge}, we obtained the matching $\{23',31'\}$, encoded as $\uu{2}31$: the entry in the second row and first column of \cref{tab:s3ensemble}. All other entries are obtained similarly.
\end{example}

\begin{example}
\label{ex:staircase4}
Let $n = 4$ and place the permutation $1234$ on every edge of $\Delta_3$, read from the vertex with the smaller label. \cref{fig:staircase4} displays a fine mixed subdivision of the boundary of $4\Delta_3$ inducing this data: on the facet with vertex set $x < y < z$, the four unit triangles line up along the edge $x'z'$ in the order $1, 2, 3, 4$ starting from $x'$, and six rhombi fill the rest. Reading off the maximal matchings as in \cref{ex:readtable} yields the table of \cref{tab:staircase4}. For instance, $\T_{1,4}$ is the matching of the summands $1, 2, 3$ into the facet opposite $1'$. On the edge $3'4'$ the pair $\{1,2\}$ is read in the order $1, 2$ starting from $3'$, so this matching sends $1$ to $4'$ and $2$ to $3'$, and comparing with the summand $3$ on the remaining edges gives $3 \mapsto 2'$, hence $\T_{1,4} = 432\uu{1}$. The permutation $4321$ appears four times in the table, at the antidiagonal cells, so \cref{thm:boundary} confirms that this boundary extends to a fine mixed subdivision of $4\Delta_3$: following the proof of the theorem, the extension adjoins the perfect matching sending each summand $a$ to the vertex $(5-a)'$.
\end{example}

\begin{figure}
\centering
\resizebox{\textwidth}{!}{\input{tikz/staircase4}}
\caption{A fine mixed subdivision of the boundary of $4\Delta_3$ inducing the permutation $1234$ on every edge, read from the vertex with the smaller label. The tetrahedron is viewed from above the vertex $4'$, so that the three facets containing $4'$ are seen from the outside, and the bottom facet $1'2'3'$ is unfolded along the edge $2'3'$. Each unit triangle is colored and labeled by its summand, and the small numbers along each edge record the induced permutation.}
\label{fig:staircase4}
\end{figure}

\begin{table}[h]
\centering
$$\begin{matrix}
\uu{1}432 & 4\uu{1}32 & 43\uu{1}2 & 432\uu{1}\\
\uu{2}431 & 4\uu{2}31 & 43\uu{2}1 & 431\uu{2}\\
\uu{3}421 & 4\uu{3}21 & 42\uu{3}1 & 421\uu{3}\\
\uu{4}321 & 3\uu{4}21 & 32\uu{4}1 & 321\uu{4}\\
\end{matrix}$$
\caption{The $S_4$-ensemble of the boundary subdivision in \cref{fig:staircase4}. The permutation $4321$ appears four times, at the antidiagonal cells.}
\label{tab:staircase4}
\end{table}

\begin{example}
\label{ex:regular4}
The boundary of \cref{ex:staircase4} is as uniform as possible, so we also give an example with no such symmetry. Keep $n = 4$ and lift the vertex $v'$ of the $p$-th summand of $\Delta_3 + \Delta_3 + \Delta_3 + \Delta_3$ to the height $h_p(v)$, where
$$\bigl(h_p(v)\bigr)_{p, v \in [4]} = \begin{pmatrix}0 & 4 & 1 & 0\\ 3 & 5 & 0 & 4\\ 3 & 6 & 6 & 0\\ 0 & 1 & 2 & 5\end{pmatrix}.$$
The lower faces of the lifted Minkowski sum project to a regular fine mixed subdivision of $4\Delta_3$, and \cref{fig:regular4} displays its restriction to the boundary. On the edge $x'y'$ with $x < y$ the induced permutation lists the summands in increasing order of the slope $h_p(y) - h_p(x)$, which gives $4231, 2143, 3124, 2134, 3124, 3142$ on the edges $1'2', 1'3', 1'4', 2'3', 2'4', 3'4'$. Reading off the maximal matchings as in \cref{ex:staircase4} yields the table of \cref{tab:regular4}. Note that no facet is tiled as in the staircase: on each facet the four unit triangles are scattered rather than lined up along an edge, and the four facet tilings are pairwise distinct. The permutation $1342$ appears four times in the table, at the cells $(1,1), (3,2), (4,3), (2,4)$, so \cref{thm:boundary} confirms that this boundary extends to a fine mixed subdivision of $4\Delta_3$. Here the extension is the regular subdivision we started with, and following the proof of the theorem it adjoins the unique perfect matching of that subdivision, which sends the summands $1, 2, 3, 4$ to the vertices $1', 3', 4', 2'$.
\end{example}

\begin{figure}
\centering
\resizebox{\textwidth}{!}{\input{tikz/regular4}}
\caption{The regular fine mixed subdivision of the boundary of $4\Delta_3$ induced by the lifting heights of \cref{ex:regular4}, drawn as in \cref{fig:staircase4}.}
\label{fig:regular4}
\end{figure}

\begin{table}[h]
\centering
$$\begin{matrix}
\uu{1}342 & 3\uu{1}42 & 43\uu{1}2 & 234\uu{1}\\
\uu{2}341 & 3\uu{2}41 & 43\uu{2}1 & 134\uu{2}\\
\uu{3}142 & 1\uu{3}42 & 41\uu{3}2 & 124\uu{3}\\
\uu{4}312 & 3\uu{4}12 & 13\uu{4}2 & 132\uu{4}\\
\end{matrix}$$
\caption{The $S_4$-ensemble of the boundary subdivision in \cref{fig:regular4}. The permutation $1342$ appears four times, at the cells $(1,1)$, $(3,2)$, $(4,3)$, $(2,4)$.}
\label{tab:regular4}
\end{table}

\begin{remark}
\label{rem:loosedef}
For $n \geq 4$, not every $S_n$-ensemble need correspond to the table of a boundary matching ensemble: our definition only imposes linkage between the size-$(n-1)$ matchings, while a boundary matching ensemble requires linkage at all sizes. \cref{thm:boundary} describes an extraction map, not a bijection.

In the framework of matching ensembles, an $S_n$-ensemble carries only the top layer of the structure, so extending it to a genuine boundary is a question about recovering the lower-size linkage. This gap is not vacuous: the ensemble $E^*_5$ of \cref{sec:further} does arise from a boundary matching ensemble, but whether every $S_4$-ensemble does is left open.
\end{remark}

Any $S_3$-ensemble is exactly an acyclic system of permutations on $3\Delta_2$, moreover, since the cyclic pattern involves only two labels at a time, a system of permutations on $n\Delta_2$ is acyclic if and only if its restriction to each triple of labels $a, b, c \in [n]$ (reading only the letters $a, b, c$ in each edge permutation) is an $S_3$-ensemble. So from \cref{thm:2dacyc}, we get the following.

\begin{theorem}[Acyclic system theorem using $S_3$-ensembles]
\label{thm:s3ensemble}
Any $S_3$-ensemble has some $w \in S_3$ that appears $3$ times.
\end{theorem}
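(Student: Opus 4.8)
The plan is to translate the statement into the 2-D acyclic system theorem (\cref{thm:2dacyc}) and then read off the desired permutation from the matching ensemble that theorem produces. Everything rests on a dictionary between $S_3$-ensembles and acyclic systems of permutations on $3\Delta_2$, which I would set up first. Index the rows of $\T$ by values and the columns by positions. Deleting the underlined position $j$ together with its value $i$ from $\T_{i,j}$ turns that entry into a $2$-by-$2$ matching between the position set $[3]\setminus\{j\}$ and the value set $[3]\setminus\{i\}$. Under this identification the three entries of row $i$ are exactly the $2$-by-$2$ matchings carried by the boundary edge $[3]\setminus\{i\}$ of $3\Delta_2$, and the three entries of column $j$ are exactly the matchings whose position set is $[3]\setminus\{j\}$.

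With this in hand I would invoke \cref{prop:edgeinterpret} twice. The row linkage of $\T$ (each $\T_{i,j}$ is linked to some $\T_{i,j'}$) is precisely the left linkage of these $2$-by-$2$ matchings, which by \cref{prop:edgeinterpret} says the labels along each boundary edge can be put in a line --- equivalently, that each row really does come from a single permutation on that edge. The column linkage of $\T$ is precisely the right linkage, which by \cref{prop:edgeinterpret} is the acyclicity of the resulting system of three edge-permutations. The $2$-compatible condition is nothing but axiom (1) of a matching ensemble at size $2$, ensuring the three edge-permutations are mutually consistent; the $1$-compatible condition is vacuous, being the forced-entry condition. Running the correspondence in reverse shows that every acyclic system yields an $S_3$-ensemble, so the two notions coincide --- and, unlike the $n \geq 4$ situation flagged in \cref{rem:loosedef}, here there is no smaller matching size on which linkage could be lost, so nothing is discarded.

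Given the dictionary, the rest is short. I would apply \cref{thm:2dacyc} to obtain a fine mixed subdivision of $3\Delta_2$ whose boundary is the system of three permutations coming from $\T$; by the Cayley trick and \cref{thm:MEsubdiv} this subdivision corresponds to a matching ensemble between $[3]$ and $[3]$ whose $2$-by-$2$ matchings are precisely those underlying the entries of $\T$. By \cref{def:ME} this matching ensemble contains a matching between $[3]$ and $[3]$ itself: a perfect matching of $K_{3,3}$, i.e. a permutation $w \in S_3$. By axiom (2) its three size-$2$ submatchings lie in the ensemble, hence in $\T$; encoded as underlined permutations they are the entries $\T_{w_1,1} = \T_{w_2,2} = \T_{w_3,3} = w$, which sit in three distinct rows and columns since $w_1, w_2, w_3$ are distinct. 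Hence $w$ appears three times.

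The hard part is the dictionary step, where one must verify on the nose that the two linkage conditions of an $S_3$-ensemble are the left and right linkage of $2$-by-$2$ matchings (so that \cref{prop:edgeinterpret} applies) and that compatibility supplies exactly the needed uniqueness. After that, the only remaining care is to confirm that the matching ensemble produced by \cref{thm:MEsubdiv} genuinely extends $\T$ rather than some relabeling of it; this is clear because the $2$-by-$2$ matchings of a matching ensemble are determined by the $1$-skeleton of $3\Delta_2$, which is exactly the boundary data fed in.
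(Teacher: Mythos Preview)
Your proposal is correct and takes essentially the same approach as the paper. The paper's own argument is the single sentence ``Any $S_3$-ensemble is exactly a system of acyclic permutations on $3\Delta_2$, so from \cref{thm:2dacyc} we get the following''; your write-up unpacks that identification via \cref{prop:edgeinterpret} and then extracts the permutation $w$ through \cref{thm:MEsubdiv}, which is exactly the intended route. One small clean-up: for $n=3$ the $2$-compatible condition is in fact vacuous (each size-$2$ induced submatching is the entire matching of a single cell), so you do not need it to play the role of axiom~(1); the dictionary already goes through on forced-entry plus linkage alone.
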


Using this language, we can easily come up with a minimal counterexample for \cref{conj:acyc}.

\begin{proposition}
\label{prop:acyccounter}
    The minimal counterexample for \cref{conj:acyc} occurs for $n=4$ and $d=4$, with the system of permutations $1234, 1234, 1423, 2413, 4213, 1423$ on the edges $1'2',1'3',1'4',2'3',2'4',3'4'$, respectively.
\end{proposition}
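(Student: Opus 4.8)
The plan is to verify three things: that the displayed system is acyclic (so that it is a bona fide instance of \cref{conj:acyc}); that it is realized by no fine mixed subdivision of $4\Delta_3$; and that no pair $(n,d)$ strictly smaller than $(4,4)$ can host such a counterexample.

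\emph{Acyclicity.} I would argue via \cref{prop:edgeinterpret}, by which a system of permutations is acyclic precisely when the $2$-by-$2$ matchings it induces satisfy right linkage; left linkage is automatic here, since each of the six entries is already a permutation of $[4]$ and hence its labels lie in a line along the edge. Right linkage concerns triples of vertices, so the check reduces to showing that for each of the four triangular $2$-faces $\{1',2',3'\}$, $\{1',2',4'\}$, $\{1',3',4'\}$, $\{2',3',4'\}$ of $\Delta_3$, the three permutations on its edges -- read consistently around the triangle (one of the three in reverse) -- share no pair $(i,j)$ with $i$ before $j$ in all three; equivalently, the restriction to each $2$-face is an acyclic $2$-D system as in \cref{thm:2dacyc}. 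The three faces meeting $1'2'$ or $1'3'$ take only a couple of lines, since the identity permutation $1234$ on one of their edges forces $i<j$ and thereby cuts the candidate pairs down to a handful; the remaining face $\{2',3',4'\}$ amounts to comparing $2413$, $1423$, and the reversal $3124$ of $4213$, and one checks the corresponding three sets of ordered pairs have empty intersection. So the system satisfies the hypothesis of \cref{conj:acyc}.

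\emph{Non-realizability.} Suppose a fine mixed subdivision of $4\Delta_3$ had this boundary. By \cref{thm:MEsubdiv} it corresponds to a matching ensemble between $[4]$ and $[4]$ whose $2$-by-$2$ matchings are exactly those read off the six edge-permutations, and extracting its $3$-by-$3$ matchings yields an $S_4$-ensemble $\T$ each of whose entries is compatible with the prescribed $2$-by-$2$ (edge) matchings. The point is that this compatibility is rigid enough to break linkage. Writing down, cell by cell, the constraint that $\T_{i,j}$ restrict correctly to all three edge matchings of its boundary face determines most cells; in particular I expect to get, each uniquely, $\T_{3,1}=3214$, $\T_{3,3}=2134$, $\T_{3,4}=2413$, $\T_{1,2}=4123$, $\T_{2,2}=4231$, $\T_{4,2}=3421$. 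The linkage condition then forces $\T_{3,1}=3214$ to be linked to some other entry of row $3$; it is linked to neither $\T_{3,3}=2134$ nor $\T_{3,4}=2413$ (each differs from $3214$ in three positions, not two), so it must be linked to $\T_{3,2}$; but the unique transposition-neighbour of $3214$ carrying a $3$ in position $2$ -- which forced-entry demands for the cell $(3,2)$ -- is $2314$, so $\T_{3,2}=2314$. Now $2314$ is linked to none of $4123$, $4231$, $3421$, so $\T_{3,2}$ has no linked partner in column $2$, contradicting linkage. Hence no such $\T$ exists: this acyclic boundary underlies no $S_4$-ensemble, and \emph{a fortiori} no fine mixed subdivision.

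\emph{Minimality and the main obstacle.} If $\min(n,d)\le 2$ the matching ensemble consists only of matchings of size at most $2$, so there is nothing to extend and no counterexample; if $\min(n,d)=3$, then using $\Delta_{n-1}\times\Delta_{d-1}\cong\Delta_{d-1}\times\Delta_{n-1}$ we may take $d=3$, whence \cref{thm:2dacyc} (equivalently its matching-ensemble reformulation) says every acyclic system is realized. So any counterexample requires $\min(n,d)\ge 4$, and $(4,4)$ is the only such pair with $\max(n,d)=4$; the displayed system is therefore minimal. The step I expect to carry the weight is non-realizability: one has to pin down the conventions relating $3$-by-$3$ matchings to underlined permutations and relating edge-permutations to $2$-by-$2$ matchings, and then push through the (finite but finicky) computation that forces the six entries above and brings about the linkage clash -- the rest is routine.
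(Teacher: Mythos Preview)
Your non-realizability argument is essentially the paper's: determine the entries of row~$3$ and column~$2$ from the edge data and exhibit a linkage clash---the paper case-splits on $\T_{3,2}\in\{4321,2314\}$ and locates the failure at $\T_{1,2}$ or $\T_{3,3}$, whereas you force $\T_{3,2}=2314$ via row-$3$ linkage on $\T_{3,1}$ and locate it at $\T_{3,2}$ itself, but this is the same computation repackaged. You are in fact more complete than the paper, which proves only non-realizability and leaves acyclicity and minimality implicit; your face-by-face check via \cref{prop:edgeinterpret} and your $\min(n,d)\ge 4$ argument correctly fill those gaps.
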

\begin{proof}
By the submatching condition of \cref{def:ME}, every $3$-by-$3$ matching in a matching ensemble must have all of its $2$-by-$2$ submatchings in the ensemble. Since the $2$-by-$2$ matchings are prescribed by the given system of permutations, the candidates for each entry are exactly the underlined permutations all of whose $2$-by-$2$ submatchings appear among the prescribed matchings. Carrying this out yields \cref{tab:acyccounter}, all entries are determined except $\T_{3,2}$, which can be $4\uu{3}21$ or $2\uu{3}14$. If $\T_{3,2} = 2\uu{3}14$, linkage fails on $4\uu{1}23$ (no link in its column), if $\T_{3,2} = 4\uu{3}21$, linkage fails on $21\uu{3}4$ (no link in its row). So there is no fine mixed subdivision of $4\Delta_3$ fitting the given boundary information.
\begin{table}[h]
\centering
$$\begin{matrix}
\uu{1}324 & 4\uu{1}23 & 23\uu{1}4 & 432\uu{1}\\
\uu{2}314 & 4\uu{2}31 & 43\uu{2}1 & 431\uu{2}\\
\uu{3}214 & 4\uu{3}21,2\uu{3}14 & 21\uu{3}4 & 241\uu{3}\\
\uu{4}321 & 3\uu{4}21 & 23\uu{4}1 & 231\uu{4}\\
\end{matrix}$$
\caption{The table from $1234, 1234, 1423, 2413, 4213, 1423$ on $4\Delta_3$ does not satisfy linkage.}
\label{tab:acyccounter}
\end{table}
\end{proof}

The counterexample is illustrated in \cref{fig:acyccounter}. The obstruction is genuinely three-dimensional: each facet of $4\Delta_3$ admits a fine mixed subdivision inducing the given edge permutations, as displayed, and the four facet subdivisions realize the choice $\T_{3,2} = 2\uu{3}14$ in \cref{tab:acyccounter}. The failure of linkage identified in the proof occurs on $4\uu{1}23$ within its column, and a column mixes matchings from different facets, so the displayed facet subdivisions cannot be glued to a subdivision of the interior. With the other choice $\T_{3,2} = 4\uu{3}21$ the failure already occurs within row $3$, meaning the facet opposite $3'$ itself admits no fine mixed subdivision compatible with that entry. We also point out that with the choice $\T_{3,2} = 2\uu{3}14$ the permutation $2314$ appears at four of its table cells in \cref{tab:acyccounter}. This does not contradict \cref{thm:boundary}: since linkage fails, the table is not an $S_4$-ensemble and does not arise from a boundary matching ensemble, so the multiplicity criterion does not apply.

\begin{figure}
\centering
\resizebox{\textwidth}{!}{\input{tikz/acyccounter4}}
\caption{The minimal counterexample of \cref{prop:acyccounter}, drawn as in \cref{fig:staircase4}. The small numbers along the edges record the system of permutations $1234, 1234, 1423, 2413, 4213, 1423$ on the edges $1'2', 1'3', 1'4', 2'3', 2'4', 3'4'$. Each facet of $4\Delta_3$ is filled by a fine mixed subdivision inducing this edge data, and yet no fine mixed subdivision of $4\Delta_3$ has this boundary.}
\label{fig:acyccounter}
\end{figure}

\section{The algebra of $S_n$-ensembles}
\label{sec:algebra}

In this section we translate the defining conditions of an $S_n$-ensemble into the language of one-sided multiplication in $S_n$. Throughout, permutations act as maps from positions to values, so the entry $\T_{i,j}$ is a permutation $g$ with $g(j) = i$, and the table cells of $w$ are the cells $(w(j), j)$. Composition is right-to-left. Every verification in \cref{sec:main} reduces, via this dictionary, to a short computation.

\subsection{Linkage is one-sided multiplication}

When $g = \T_{i,j} \sim \T_{i,j'} = g'$, then $g' = g\,(j\ j')$: within row $i$ the value $i$ must move from position $j$ to position $j'$, and right multiplication by $(j\ j')$ is the only transposition of positions that does this. When $g = \T_{i,j} \sim \T_{i',j} = g'$, then $g' = (i\ i')\,g$: the value at position $j$ must change from $i$ to $i'$, which as words exchanges the values $i$ and $i'$. So rows of an ensemble move by right multiplication and columns by left multiplication. In particular, if $\T_{i,j}$ is determined and we know $\T_{i,j} \sim \T_{i,j'}$ (resp. $\T_{i,j} \sim \T_{i',j}$), then $\T_{i,j'} = \T_{i,j}(j\ j')$ (resp. $\T_{i',j} = (i\ i')\T_{i,j}$) is determined. We write $\T_{i,j} \sim \T_{i,j'} = w$ and call $w$ the \newword{candidate} for that cell linked to $\T_{i,j}$. For example, from $\T_{1,1} = \uu{1}234$ and $\T_{1,1} \sim \T_{1,3}$ we get $\T_{1,3} = \mathrm{id}\cdot(1\ 3) = 32\uu{1}4$.

\begin{example}[A running example]
\label{ex:running}
The $S_4$-ensemble of \cref{tab:running} will follow us through this section: every gadget below is illustrated on it. The identity appears three times, at the diagonal cells $(1,1)$, $(2,2)$, $(3,3)$. The entry $z := 231\uu{4}$ at $(4,4)$ appears twice, and $p_1 := z\,(1\ 4) = 4312$ appears four times, at the cells $(1,3)$, $(2,4)$, $(3,2)$, $(4,1)$. The row link of $e = \T_{1,1}$ is at column $4$, where $\T_{1,4} = e\,(1\ 4) = 423\uu{1}$, and its column link is at row $3$, where $\T_{3,1} = (1\ 3)\,e = \uu{3}214$.
\end{example}

\begin{table}[h]
\centering
$$\begin{matrix}
\uu{1}234 & 4\uu{1}32 & 43\uu{1}2 & 423\uu{1}\\
\uu{2}314 & 1\uu{2}34 & 13\uu{2}4 & 431\uu{2}\\
\uu{3}214 & 4\uu{3}12 & 12\uu{3}4 & 421\uu{3}\\
\uu{4}312 & 1\uu{4}32 & 13\uu{4}2 & 231\uu{4}\\
\end{matrix}$$
\caption{The running example of \cref{sec:algebra}: a valid $S_4$-ensemble in which $e$ appears three times (the diagonal cells of $[3]$), $z = 231\uu{4}$ twice, and $p_1 = z\,(1\ 4) = 4312$ four times. Its blocking graph is triangle-free with unique sink $p_1$.}
\label{tab:running}
\end{table}

Next we show that compatibility between two cells can be checked by studying the quotient of two permutations.

\begin{lemma}
\label{lem:compat}
Let $u, v$ be entries with underlines in columns $j, j'$ respectively. Then $u$ and $v$ are compatible if and only if every nontrivial cycle of $v^{-1}u$ meets $\{j, j'\}$.
\end{lemma}
\begin{proof}
An incompatibility between $u$ and $v$ is a set $I$ with $j, j' \notin I$, $u(I) = v(I)$, and $u|_I \neq v|_I$. Writing $\sigma = v^{-1}u$: the condition $u(I) = v(I)$ is equivalent to $\sigma(I) = I$, and $u|_I = v|_I$ is equivalent to $\sigma|_I = \mathrm{id}$. So an incompatibility exists if and only if $\sigma$ has an invariant set avoiding $\{j,j'\}$ on which it acts nontrivially. A minimal such set is a nontrivial cycle of $\sigma$ avoiding $\{j,j'\}$.
\end{proof}

For example, $\uu{1}432$ and $1\uu{4}23$ have quotient the transposition $(3\ 4)$, which avoids the underline positions $\{1,2\}$. They are incompatible. The criterion handles all $k$-compatibility conditions at once, and yields two consequences that we use constantly.

\begin{corollary}[Line Lemma]
\label{cor:linelemma}
Let $u, v$ be two distinct entries of an $S_n$-ensemble.
\begin{enumerate}
\item If $u$ and $v$ share a column $j$ (resp.\ a row), they are compatible if and only if their quotient $v^{-1}u$ is a single cycle, which then passes through $j$ (resp.\ through both underline positions), in particular they are linked if and only if the quotient is a transposition.
\item If $v^{-1}u$ is \emph{not} a single cycle, then $u$ and $v$ share neither a row nor a column.
\end{enumerate}
\end{corollary}
\begin{proof}
For entries $u, v$ in column $j$ with values $i \neq i'$ there, $\sigma := v^{-1}u$ satisfies $\sigma(j) = v^{-1}(i) \neq j$, so $j$ lies on a nontrivial cycle of $\sigma$, by \cref{lem:compat} no other nontrivial cycle may exist. For a row, $\sigma(j) = j'$, so $j$ and $j'$ lie on the same cycle and again no other may exist. For the linkage statement, a link is a one-sided multiplication by a transposition, so the quotient of a linked pair is a transposition, and conversely a transposition quotient along a row or column is realized by exactly such a multiplication. 

For the second claim, two entries in a common row or column are, being entries of an ensemble, compatible. So by the first claim, their quotient is a single cycle (or the identity). A non-single-cycle quotient therefore forbids a shared row or column.
\end{proof}

\begin{example}
In \cref{tab:running}: the entries $\T_{1,1} = \uu{1}234$ and $\T_{1,3} = 43\uu{1}2$ share row $1$, and their quotient $(1\ 3\ 2\ 4)$ is a single cycle passing through both underline columns. In the other direction, $4\uu{1}32$ at $(1,2)$ and $z = 2314$ (at $(2,1)$ and $(4,4)$) have quotient $(1\ 4)(2\ 3)$, two disjoint transpositions, and accordingly these entries share no row and no column.
\end{example}

\subsection{Symmetries}

\begin{lemma}
\label{lem:symmetry}
The following operations map $S_n$-ensembles to $S_n$-ensembles and preserve multiplicities of entries:
\begin{enumerate}
\item (two-sided action) for $\pi, \rho \in S_n$, the table $\T'_{\pi(i), \rho(j)} = \pi\, \T_{i,j}\, \rho^{-1}$,
\item (transpose-inverse) the table $\T'_{j,i} = (\T_{i,j})^{-1}$.
\end{enumerate}
Moreover, the two-sided action is transitive on pairs. In particular one may always normalize a chosen entry to be the identity permutation at a diagonal cell.
\end{lemma}
\begin{proof}
Let us start with the first claim. Forced entry condition is preserved, as $\T'_{\pi(i),\rho(j)}(\rho(j)) = \pi(\T_{i,j}(j)) = \pi(i)$. Linked pairs map to linked pairs since $(\pi u \rho^{-1})^{-1}(\pi v \rho^{-1}) = \rho\, u^{-1}v\, \rho^{-1}$ is a transposition iff $u^{-1}v$ is. Compatibility is preserved since quotients are conjugated by $\rho$ while underline positions are relabeled by $\rho$.

Now we look at the second claim. Forced entry condition is preserved, as if $u = \T_{i,j}$ then $u^{-1}(i) = j$, so $u^{-1}$ is admissible at cell $(j,i)$. Row links become column links and vice versa since $u \sim v$ iff $u^{-1} \sim v^{-1}$ (the quotients $u^{-1}v$ and $u v^{-1}$ are conjugate transpositions). For compatibility, suppose $I$ witnesses an incompatibility of $u, v$ (columns $j, j'$), and let $J = u(I) = v(I)$. Then $J$ avoids the underline positions $i, i'$ of $u^{-1}, v^{-1}$ (as $u^{-1}(i) = j \notin I$ and $v^{-1}(i') = j' \notin I$), $u^{-1}(J) = I = v^{-1}(J)$, and $u^{-1}|_J = v^{-1}|_J$ would force $u|_I = v|_I$, so $J$ witnesses an incompatibility of the inverses.

For transitivity, given a target entry $g$ at cell $(i,j)$ and a source entry $h$ at $(i_0, j_0)$: choose $\pi$ with $\pi(i_0) = i$ and set $\rho = g^{-1}\pi h$, then $\rho(j_0) = g^{-1}\pi(i_0) = g^{-1}(i) = j$ and $\pi h \rho^{-1} = g$.
\end{proof}

\begin{example}
Again take a look at \cref{tab:running}. Notice we put the identity at $(1,1)$, which is what we tend to do as an application of \cref{lem:symmetry}. Under transpose-inverse the table becomes another valid ensemble, whose multiplicity four entry is $p_1^{-1} = 3421$.
\end{example}

From \cref{lem:symmetry}, we can extract the following tool.

\begin{corollary}
\label{cor:transport}
Let $P$ be a set of prescribed entries and let $\sigma$ be a symmetry of \cref{lem:symmetry} with $\sigma(P) = P$. If every $S_n$-ensemble containing $P$ has the entry $v$ at the cell $c$, then every such ensemble has the entry $\sigma(v)$ at the cell $\sigma(c)$.
\end{corollary}
\begin{proof}
For an ensemble $\T$ containing $P$, the table $\sigma^{-1}(\T)$ is an ensemble containing $\sigma^{-1}(P) = P$, so its entry at $c$ is $v$. That entry is $\sigma^{-1}$ applied to the entry of $\T$ at $\sigma(c)$, whence $\T_{\sigma(c)} = \sigma(v)$.
\end{proof}

\subsection{The link graph of a line}

Two entries sharing a line need not be linked to each other, but the Line Lemma (\cref{cor:linelemma}) constrains where each of them may find its link.

\begin{lemma}
\label{lem:linkrestrict}
Let $p_a = \T_{i,a}$ and $p_b = \T_{i,b}$ be two entries of a common row, compatible but not linked, and set $q := p_b^{-1}p_a$, by \cref{cor:linelemma} $q$ is a single cycle through $a$ and $b$, and in fact $q(a) = b$ (since $p_a(a) = i = p_b(b)$ gives $q(a) = p_b^{-1}(i) = b$). Then the row link of $p_a$ lies either at column $q^{-1}(a)$ or at a column outside $\mathrm{supp}(q)$. The row link of $p_b$ lies either at column $q(b)$ or outside $\mathrm{supp}(q)$. The dual statement holds for two entries of a common column, with $q$ replaced by the left quotient $p_b p_a^{-1}$.
\end{lemma}
\begin{proof}
Let $q$ be a $k$-cycle. A row link of $p_a$ at a support column $x = q^m(a)$ with $1 \le m \le k-1$ places the candidate $p_a\,(a\ x)$. For $m = 1$ we have $x = q(a) = b$, the column of $p_b$ itself, so this link would make $p_a$ and $p_b$ linked, contrary to hypothesis. For $2 \le m \le k-2$, the quotient of the candidate with $p_b$ is $q\,(a\ x)$, which splits into two disjoint nontrivial cycles, of lengths $m$ and $k-m$. As this candidate would share row $i$ with $p_b$, the Line Lemma (\cref{cor:linelemma}) forbids it. Only $m = k-1$, i.e.\ $x = q^{-1}(a)$, survives among support columns. The remaining possibility is an off-support column. The statement for $p_b$ is the same computation with the roles reversed, and the column statement follows by symmetry in \cref{lem:symmetry}.
\end{proof}

\begin{example}
In row $2$ of \cref{tab:running}, take $p_a = \T_{2,2} = 1\uu{2}34$ and $p_b = \T_{2,1} = \uu{2}314$, so $q = p_b^{-1}p_a = (1\ 3\ 2)$ with $q(2) = 1$, as the lemma prescribes. The row link of $p_a$ must lie at $q^{-1}(2) = 3$ or at the off-support column $4$, in the table it is at column $3$.
\end{example}

At $n = 4$ there are few off-support columns to spare, so we get a stronger claim.

\begin{corollary}[Forced Bridge]
\label{cor:bridge}
Let $p_a = \T_{i,a}$, $p_b = \T_{i,b}$ be entries of a common row of an $S_4$-ensemble whose quotient $q = p_b^{-1}p_a$ is the $3$-cycle on $\{a,b,c\}$. Then the \newword{bridge}
$$\T_{i,c} = p_a\,(a\ c) = p_b\,(b\ c)$$
is an entry of the row (its value at $c$ is $i$). The dual statement holds in a column.
\end{corollary}
\begin{proof}
Here $\mathrm{supp}(q) = \{a,b,c\}$ with $q^{-1}(a) = c$, and at $n = 4$ there is a single off-support column $d$. By \cref{lem:linkrestrict}, the row link of $p_a$ is at $c$ or $d$, and likewise for $p_b$. If both linked at $d$ then $\T_{i,d} = p_a(a\ d) = p_b(b\ d)$, forcing $q = (b\ d)(a\ d)$, contradicting $\mathrm{supp}(q) = \{a,b,c\}$. So at least one of $p_a, p_b$ links at $c$, its partner there is the displayed bridge, and $p_a(a\ c) = p_b(b\ c)$ since $(b\ c)(a\ c) = (a\ c\ b) = q$. Its value at column $c$ is $p_a(a) = i$.
\end{proof}

\begin{example}
Continuing in row $2$ of \cref{tab:running}: the quotient $q = (1\ 3\ 2)$ of the previous example has support $\{1,2,3\}$, and the bridge sits at column $3$:
$\T_{2,3} = p_a\,(2\ 3) = p_b\,(1\ 3) = 13\uu{2}4$.
\end{example}

\begin{corollary}[Completion]
\label{cor:completion}
Let $p_a = \T_{i,a}$, $p_b = \T_{i,b}$ be entries of a common row of an $S_4$-ensemble whose quotient $q = p_b^{-1}p_a$ is a $4$-cycle. Write $c = q(b)$ and $d = q(c)$, so $q = (a\ b\ c\ d)$ and $\{a,b,c,d\} = [4]$. Then the row is completely determined:
$$\T_{i,c} = p_b\,(b\ c), \qquad \T_{i,d} = p_a\,(a\ d).$$
The dual statement holds in a column.
\end{corollary}
\begin{proof}
At $n = 4$ a $4$-cycle has no off-support column, so by \cref{lem:linkrestrict} the row link of $p_a$ is at $q^{-1}(a) = d$ and that of $p_b$ is at $q(b) = c$. These are $\T_{i,d} = p_a(a\ d)$ and $\T_{i,c} = p_b(b\ c)$, completely determining the row.
\end{proof}

\begin{example}
In row $1$ of \cref{tab:running} we have $p_a = \T_{1,1} = \uu{1}234$ and $p_b = \T_{1,3} = 43\uu{1}2$, with quotient the $4$-cycle $q = p_b^{-1}p_a = (1\ 3\ 2\ 4)$, so $c = q(3) = 2$ and $d = q(2) = 4$. \cref{cor:completion} forces the rest of the row: $\T_{1,2} = p_b\,(3\ 2) = 4\uu{1}32$ and $\T_{1,4} = p_a\,(1\ 4) = 423\uu{1}$, as the table shows.
\end{example}

Thus, at $n = 4$, a pair of entries sharing a line does precisely one of four things, according to the cycle type of its quotient: a transposition means the two are linked (\cref{cor:linelemma}(1)). A $3$-cycle forces the bridge (\cref{cor:bridge}). A $4$-cycle forces both remaining cells of the line (\cref{cor:completion}), and a product of two disjoint transpositions is impossible in a shared line (\cref{cor:linelemma}(2)). Every quotient type is maximally rigid.

The bridge and completion rules combine into a structural statement about the \newword{link graph} of a line: the graph on the four positions of a row (resp.\ the four rows of a column) in which two positions are adjacent when the corresponding entries are linked.

\begin{corollary}[Line Dichotomy]
\label{cor:linetree}
The link graph of every line of an $S_4$-ensemble is a spanning tree: a star or a path.
\end{corollary}
\begin{proof}
 Link graphs of lines never contain cycles, for any $n$: going around a cycle of positions $c_1, c_2, \dots, c_k, c_1$ multiplies the entry on the right by $(c_1c_2)(c_2c_3)\cdots(c_kc_1)$, which sends $c_k$ to $c_2$ and hence is not the identity. By the linkage axiom no vertex is isolated, so at $n=4$ the link graph is either a spanning tree ($3$ edges) or a perfect matching ($2$ edges), and it remains to exclude the matching, say with edges $a \sim b$ and $c \sim d$.

Assume the line is a row. Consider the pair $p_a = \T_{i,a}$, $p_c = \T_{i,c}$ not linked, so by the Line Lemma (\cref{cor:linelemma}) the quotient $q = p_c^{-1}p_a$ is a $3$-cycle or a $4$-cycle through $a$ and $c$ with $q(a) = c$. If $q$ is a $3$-cycle with support $\{a,c,x\}$, then by Forced Bridge (\cref{cor:bridge}) the entry at column $x$ equals both $p_a(a\ x)$ and $p_c(c\ x)$. Whichever of $x = b, d$ holds, this makes the entry at $x$ linked to \emph{both} $p_a$ and $p_c$, giving a vertex of degree two: a third edge. If $q$ is a $4$-cycle, then by Completion (\cref{cor:completion}) $p_a$ links at $q^{-1}(a)$ and $p_c$ at $q(c)$. The matching forces $q^{-1}(a) = b$ and $q(c) = d$, so $q = (a\ c\ d\ b)$, and the quotient of the two remaining entries is
$$p_d^{-1}p_b = (c\ d)\,q\,(a\ b) = (b\ d),$$
a transposition: $p_b$ and $p_d$ are linked, again a third edge. Either way the link graph has more than two edges, contradicting the matching. So the link graph is a spanning tree, and the two isomorphism types of trees on four vertices are the star and the path.

Column case follows by symmetry in \cref{lem:symmetry}.
\end{proof}

\begin{example}
All eight lines of \cref{tab:running} have path link graphs. Row $1$'s path visits the columns in the order $1\!-\!4\!-\!2\!-\!3$. Stars do occur: in the rigid ensemble $E^*_4$ of \cref{tab:estar4}, row $4$ and column $4$ are stars centered at position $4$.
\end{example}

Studying the distance within a link graph gives us the following tool.

\begin{lemma}
\label{lem:pathmetric}
Let $u, v$ be two entries of a line of an $S_4$-ensemble, at distance $d$ in its link graph (a star or a path, by \cref{cor:linetree}). Then the quotient from $u$ to $v$, namely $u^{-1}v$ for a row and $u\,v^{-1}$ for a column, is the single cycle spelled by the successive positions along the tree path from $u$ to $v$: a cycle of length $d + 1$ supported on those positions. 
\end{lemma}
\begin{proof}
Entries adjacent in the link graph differ by the transposition of their two positions, so the quotient from $u$ to $v$ along the tree path $d_0 - d_1 - \cdots - d_d$ is $(d_0\,d_1)(d_1\,d_2)\cdots(d_{d-1}\,d_d) = (d_0\ d_1\ \cdots\ d_d)$.
\end{proof}

Hence the distance between two entries of a line is the length of their quotient cycle minus one. A star has diameter $2$, so a $4$-cycle quotient certifies that the line's link graph is a path and that the two entries are its ends. 

\begin{example}
Along row $1$ of \cref{tab:running} (the path $1\!-\!4\!-\!2\!-\!3$): the entries at columns $1$ and $4$ are adjacent, with quotient the transposition $(1\ 4)$. Columns $1$ and $2$ are at distance two, with a $3$-cycle quotient, and the ends, columns $1$ and $3$, are at distance three, with the $4$-cycle quotient $(1\ 4\ 2\ 3)$ spelling the path from the entry at column $1$. The last quotient certifies, without inspecting any links, that this line is a path with these two cells as its ends.
\end{example}

\begin{remark}
\label{rem:linetreegeom}
For ensembles arising from boundary matching ensembles (\cref{thm:boundary}), the Line Dichotomy (\cref{cor:linetree}) is the combinatorial shadow of a geometric fact. A row is the family of maximal matchings of the fine mixed subdivision of a facet, and the linkage axiom for matching fields admits a strong form due to Bernstein and Zelevinsky \cite{BernsteinZelevinsky93}: the union of the $n$ maximal matchings is a spanning tree $T$, the \newword{linkage covector}, with all value-side degrees equal to $2$, from which each matching is recovered as a push $T^{j\to}$ \cite{LohoSmith20, Yao25}. Two pushes differ in a single edge exactly when the two position vertices share a neighbour in $T$, so the link graph of the row is the contraction of the covector tree onto its position vertices, connected because a spanning tree is connected. One level up, the same phenomenon governs adjacency of cells: the facet-adjacency graph of the cells around a lattice point of a fine mixed subdivision is a spanning tree \cite[Lemma 4.10]{OYoo10} (see also \cite[\S 5]{Yao25}), the cells being encoded by spanning trees \cite{ArdilaBilley07} and the lattice points by matchings and topes \cite{LohoSmith20, GalashinNenashevPostnikov23}. Since an $S_n$-ensemble carries only the top layer of this structure (\cref{rem:loosedef}), we do not rely on the geometry: the proof above uses only the ensemble axioms.
\end{remark}

% Every contradiction in the cascades of \cref{sec:main} arrives either as a permutation pushed past its multiplicity cap, or as a cell whose admissible values are all excluded.

\subsection{The blocking graph}
\label{subsec:blocking}

We now attach to every ensemble a directed graph that turns out to be a useful tool.

\begin{definition}
\label{def:blocking}
The \newword{blocking graph} of an $S_n$-ensemble $\T$ is the directed graph with vertex set $S_n$ and a labeled edge $u \to_j v$ whenever $\T_{u(j),\,j} = v$ and $v \neq u$, that is, whenever the table cell of $u$ in column $j$ is occupied by a different permutation. We say that $v$ \newword{blocks} $u$ at column $j$.
\end{definition}

Note that the vertex set is all of $S_n$, not merely the entries of $\T$. The entries are recovered as the targets of edges. We write $\mathrm{mult}(u)$ for the number of times $u$ appears in $\T$.

\begin{example}[The blocking graph of an $S_3$-ensemble]
\label{ex:blockgraph}
\cref{fig:blockgraph} draws the blocking graph of the $S_3$-ensemble of \cref{tab:s3ensemble}, restricted to the five permutations that occur as entries (the sources of $S_3$, being non-entries, are omitted). Each edge $u \to_j v$ records that the table cell of $u$ in column $j$, the cell $(u(j), j)$, is occupied by the entry $v$, for instance $132 \to_2 231$ because the table cell of $132$ in column $2$ is $(3,2)$, where \cref{tab:s3ensemble} has $231$. The permutation $231$, which appears three times, is the unique sink (out-degree $3 - \mathrm{mult}(231) = 0$), and every other vertex has a directed path into it. The graph is acyclic, as \cref{thm:s3ensemble} requires of an ensemble with a permutation of full multiplicity.
\end{example}

\begin{figure}[h]
\centering
\begin{tikzpicture}[
  >={Stealth[length=2.2mm]},
  vtx/.style={draw,line width=0.8pt,rounded corners=2pt,inner sep=2.8pt,
              font=\small,fill=white},
  sink/.style={draw,line width=1.1pt,rounded corners=2pt,inner sep=3.2pt,
               font=\small\bfseries,fill=green!12,draw=green!50!black},
  bedge/.style={->,line width=0.8pt,black!72},
  elab/.style={font=\scriptsize,inner sep=1.5pt,fill=white,
               fill opacity=1,text opacity=1}]

% Layout: the two out-degree-2 "sources" (132, 213) on the left feed the
% out-degree-1 vertices (312 top, 321 bottom); those and the sources all drain
% into the sink 231 on the right. No edge passes through a node.
\node[vtx]  (132) at (0,1.5)    {$132$};
\node[vtx]  (213) at (0,-1.5)   {$213$};
\node[vtx]  (312) at (2.7,1.5)  {$312$};
\node[vtx]  (321) at (2.7,-1.5) {$321$};
\node[sink] (231) at (5.4,0)    {$231$};

% 132 (out-deg 2): ->_2 231 , ->_3 312
\draw[bedge] (132) to node[elab,pos=0.5]{$3$} (312);
\draw[bedge] (132) to node[elab,pos=0.62]{$2$} (231);
% 213 (out-deg 2): ->_1 231 , ->_2 312
\draw[bedge] (213) to node[elab,pos=0.5]{$1$} (231);
\draw[bedge] (213) to node[elab,pos=0.72]{$2$} (312);
% 312 (out-deg 1): ->_1 321
\draw[bedge] (312) to node[elab,pos=0.5]{$1$} (321);
% 321 (out-deg 1): ->_3 231
\draw[bedge] (321) to node[elab,pos=0.5]{$3$} (231);
\end{tikzpicture}
\caption{The blocking graph of the $S_3$-ensemble of \cref{tab:s3ensemble}, on its five entry-permutations. Edge labels are blocking columns. The boxed vertex $231$ is the unique sink, of multiplicity three.}
\label{fig:blockgraph}
\end{figure}

Next we study how the degree of each vertex behaves in the blocking graph.

\begin{proposition}
\label{prop:blockdeg}
In the blocking graph of an $S_n$-ensemble $\T$:
\begin{enumerate}
\item every vertex $u$ has out-degree $n - \mathrm{mult}(u)$, in particular the sinks are exactly the permutations of multiplicity $n$,
\item every vertex $v$ has in-degree $\mathrm{mult}(v)\cdot\big((n-1)! - 1\big)$, in particular the sources are exactly the permutations that do not appear in $\T$, and every vertex lying on a directed cycle is an entry of $\T$,
\item the total number of edges is $n \cdot n! - n^2$.
\end{enumerate}
\end{proposition}
\begin{proof}
For the first claim, each of the $n$ table cells $(u(j), j)$ of $u$ either carries $u$ itself or contributes the edge $u \to_j \T_{u(j),j}$, and $\mathrm{mult}(u)$ counts the former. 

For the second claim, fix an occurrence of $v$ at the cell $(i,j)$. The edges into $v$ through this cell are $u \to_j v$ for the permutations $u \neq v$ with $u(j) = i$, of which there are $(n-1)! - 1$. Distinct occurrences of $v$ lie in distinct columns, so these edge sets are disjoint. A vertex on a directed cycle has an incoming edge, hence positive multiplicity. 

For the last claim, sum (1) over $u \in S_n$, using $\sum_u \mathrm{mult}(u) = n^2$.
\end{proof}

The compatibility axiom constrains which cycles can occur: the shortest possible ones are triangles.

\begin{lemma}
\label{lem:girth}
The blocking graph of an $S_n$-ensemble contains no directed cycle of length two.
\end{lemma}
\begin{proof}
Suppose $u \to_j v \to_{j'} u$ with $u \neq v$. The edge $u \to_j v$ says that $v$ is the entry at the cell $(u(j), j)$, so by forced entry $v(j) = u(j)$, likewise $u$ is the entry at $(v(j'), j')$ and $u(j') = v(j')$. Hence $\sigma = v^{-1}u$ fixes both $j$ and $j'$. If $j = j'$, then $u$ and $v$ are entries in the same column with the same value there, hence occupy the same cell and are equal: a contradiction. If $j \neq j'$, then $u$ and $v$ are entries with underlines in columns $j'$ and $j$, so by \cref{lem:compat} every nontrivial cycle of $\sigma$ meets $\{j, j'\}$, as $\sigma$ fixes both, $\sigma = e$ and $u = v$: again a contradiction.
\end{proof}

\begin{example}[The blocking graph of the running example]
\label{ex:runblock}
In \cref{tab:running}, out-degrees are $4 - \mathrm{mult}$: the seven entries of multiplicity one have out-degree $3$. The twice-appearing $z$ has out-degree $2$. The identity has out-degree $1$, its single edge being $e \to_4 z$, and $p_1$, of multiplicity four, is a sink. The in-degree of $p_1$ is $4\cdot(3! - 1) = 20$, and the graph has $4 \cdot 4! - 4^2 = 80$ edges in total, as \cref{prop:blockdeg} predicts. The graph is triangle-free, and it has a sink, as \cref{thm:s4forward} promises of every triangle-free $S_4$-ensemble.
\end{example}

\subsection{Blocker tools}
\label{subsec:blocker}

The final tools record what the linkage axiom forces in the neighborhood of a permutation of near-full multiplicity. These \newword{blocker} lemmas, all valid for every $n$, are the opening moves of the promotion argument of \cref{sec:main}, and we expect them to be the correct way to tackle general $n$ cases. Throughout, $C_w \subseteq [n]$ denotes the set of columns in which $w$ occupies its table cells, so $|C_w| = \mathrm{mult}(w)$. The entry $\T_{w(c_0),\,c_0}$ at the table cell of $w$ in a missing column $c_0$ is its \newword{blocker} there.

\begin{lemma}[Blocker Lemma]
\label{lem:blocker}
Let $w$ have multiplicity $n-1$ in an $S_n$-ensemble, with missing column $c_0$ and blocker $z := \T_{w(c_0),\,c_0} \neq w$. Then $z^{-1}w$ is a single $(n-1)$-cycle supported exactly on $C_w$.
\end{lemma}
\begin{proof}
Write $\sigma = z^{-1}w$. Forced entry gives $z(c_0) = w(c_0)$, so $\sigma$ fixes $c_0$. For each $c \in C_w$, the entries $z$ (column $c_0$) and $w$ at $(w(c),c)$ (column $c$) are compatible, so by \cref{lem:compat} every nontrivial cycle of $\sigma$ meets $\{c_0, c\}$, as this holds for every $c \in C_w$ and $\sigma$ fixes $c_0$, every nontrivial cycle of $\sigma$ contains all of $C_w$. Two disjoint cycles cannot both do so, so $\sigma$ is a single cycle with support exactly $C_w$.
\end{proof}

\begin{example}
In \cref{tab:running}, $w = e$ has multiplicity $3$ with $C_e = \{1,2,3\}$ and missing column $c_0 = 4$, its blocker is $z = \T_{4,4} = 231\uu{4}$, and $z^{-1}e = z^{-1} = (1\ 3\ 2)$ is a single $3$-cycle supported exactly on $C_e$.
\end{example}

\begin{lemma}[Triangle Filter]
\label{lem:trifilter}
Let $w$ have multiplicity $n-1$ in a triangle-free $S_n$-ensemble, with missing column $c_0$ and blocker $z := \T_{w(c_0),\,c_0} \neq w$. Then no blocking target $g$ of $z$ satisfies $g(c) = w(c)$ for an occupied column $c \in C_w$.
\end{lemma}
\begin{proof}
If $g(c) = w(c)$, then the copy of $w$ at $(w(c),c)$ sits at $g$'s table cell in column $c$, giving the edge $g \to_c w$, together with $w \to_{c_0} z$ and $z \to g$ this closes a directed triangle.
\end{proof}

\begin{example}
Continuing in \cref{tab:running}, whose blocking graph is triangle-free (\cref{ex:runblock}): the blocking targets of $z$ are given by the two edges $z \to_2 p_1$ and $z \to_3 p_1$, and $p_1 = 4312$ has no fixed point in $[3]$: it agrees with the multiplicity-$3$ permutation $e$ at no occupied column, exactly as the filter demands.
\end{example}

\section{The characterization for $n = 4$}
\label{sec:main}

The arguments of this section are specific to $n = 4$, though every tool we use (the compatibility criterion of \cref{lem:compat}, the Line Dichotomy of \cref{cor:linetree}, and the blocker tools of \cref{subsec:blocker}) rests on general-$n$ statements. 

\subsection{Directed triangles of the blocking graph}
\label{subsec:triangles}

By \cref{lem:girth} the shortest possible directed cycle in a blocking graph is a triangle. For $n = 4$ we show that a directed triangle has a completely rigid local shape, and it is this shape that obstructs full multiplicity. We take the triangle as the primary object and call it a \newword{pattern}. \cref{cor:4d3mixed} shows how the pattern takes over the obstructing role that the cyclic pattern $ababab$ plays in two dimensions.

\begin{definition}
\label{def:pattern}
A \newword{pattern} in an $S_4$-ensemble $\T$ is a directed triangle of its blocking graph: three entries $u_1 \to_{j_1} u_2 \to_{j_2} u_3 \to_{j_3} u_1$. We call $\T$ \newword{triangle-free} if its blocking graph contains no directed triangle.
\end{definition}

The next lemma shows the triangle is forced into a single form, from which its whole structure follows.

\begin{lemma}[Normal form of a pattern]
\label{lem:patternform}
Let $u_1 \to_{j_1} u_2 \to_{j_2} u_3 \to_{j_3} u_1$ be a pattern in an $S_4$-ensemble, and let $j_4$ be the missing column. Then:
\begin{enumerate}
\item the columns $j_1, j_2, j_3$ are distinct, and each $u_a$ is an entry with underline in column $j_{a-1}$,
\item the quotients $\tau_a := u_{a+1}^{-1}u_a$ are the three $3$-cycles through $j_4$: with $(j_1,j_2,j_3)=(1,2,3)$ normalized, $\tau_1 = (2\,3\,4)$, $\tau_2 = (1\,4\,3)$, $\tau_3 = (1\,2\,4)$,
\item writing $w := u_1\,(j_2\ j_4)$ and $v := (j_3, j_1, j_2, j_4)$, the three entries are the table cells $(w(v_c), v_c)$ of $w$, with the entry in column $v_c$ equal to $w\,(v_{c-1}\ v_4)$ (indices cyclic in $\{1,2,3\}$). Conversely, for any $w, v \in S_4$ these three cells form a pattern.
\end{enumerate}
In particular the three entries are pairwise compatible, and there are exactly $4!\cdot 4!/3 = 192$ patterns, forming a single orbit under the symmetries of \cref{lem:symmetry}.
\end{lemma}
\begin{proof}
Read the indices cyclically, so the edge $u_a \to_{j_a} u_{a+1}$ says $u_{a+1}$ is the entry at $(u_a(j_a), j_a)$, each $u_a$ is then an entry with underline in column $j_{a-1}$, and by forced entry
\begin{equation}
u_{a+1}(j_a) = u_a(j_a). \tag{$*$}
\end{equation}

We first check that the labels are distinct. If $j_a = j_{a+1}$, then $u_{a+1}, u_{a+2}$ are entries in column $j_a$ taking the same value there by $(*)$, so they share a cell and $u_{a+1} = u_{a+2}$, a contradiction. So $j_1, j_2, j_3$ are distinct, by \cref{lem:symmetry} (a column relabeling) assume $(j_1,j_2,j_3) = (1,2,3)$, $j_4 = 4$.

We next compute the quotients. Set $\tau_a = u_{a+1}^{-1}u_a$, so $\tau_3\tau_2\tau_1 = e$. By $(*)$, $\tau_a$ fixes $j_a$, and $u_a, u_{a+1}$ have underlines in columns $j_{a-1}, j_a$, so by \cref{lem:compat} every nontrivial cycle of $\tau_a$ meets $\{j_{a-1}, j_a\}$, whence (as $\tau_a$ fixes $j_a$ and $\tau_a \neq e$) $\tau_a$ moves $j_{a-1}$. Evaluating $\tau_3\tau_2\tau_1 = e$ at $3$: since $\tau_3$ fixes $3$ we need $\tau_2(\tau_1(3)) = 3$, and $\tau_1(3) \in \{2,4\}$ with $\tau_1(3) = 2$ forcing the contradiction $\tau_2(2) = 3$, so $\tau_1(3) = 4$, $\tau_2(4) = 3$. The two cyclic shifts of the identity give likewise $\tau_2(1) = 4, \tau_3(4) = 1$ and $\tau_3(2) = 4, \tau_1(4) = 2$. These determine $\tau_1 = (2\,3\,4)$, $\tau_2 = (1\,4\,3)$, $\tau_3 = (1\,2\,4)$.

Finally we describe what the pattern looks like within a table. Set $w = u_1(2\,4)$, $v = (3,1,2,4)$. Then $u_1 = w(v_3\,v_4)$ sits in column $v_1 = 3$, and using $u_{a+1} = u_a\tau_a^{-1}$, $u_2 = w(3\,4) = w(v_1\,v_4)$ and $u_3 = w(1\,4) = w(v_2\,v_4)$ in columns $v_2, v_3$, each $u_a$ occupies the table cell $(w(v_a),v_a)$ of $w$ since $v_{a+1} \notin \{v_{a-1}, v_4\}$. Undoing the normalization gives the general formula. Conversely, given $w, v$, the entry $w(v_{c-1}\,v_4)$ at $(w(v_c),v_c)$ has value $w(v_{c+1})$ at column $v_{c+1}$, which is where $w(v_c\,v_4)$ sits, so consecutive cells form edges and the three cells are a directed triangle. 

For compatibility and the count: the quotient of the entries in columns $v_a, v_b$ is $(v_{a-1}\,v_4)(v_{b-1}\,v_4)$, a $3$-cycle whose support meets $\{v_a,v_b\}$ in exactly one point, so \cref{lem:compat} applies, and the patterns indexed by $(w,v)$ and $(w, v\circ\gamma)$ for cyclic shifts $\gamma$ of $(v_1,v_2,v_3)$ coincide, giving $4!\cdot4!/3 = 192$, a single orbit by transitivity of the two-sided action.
\end{proof}

\begin{example}
\label{ex:choice}
Consider the system of permutations $1234,1324,1423,3412,4132,4213$ on the edges $1'2',1'3',1'4',2'3',2'4',3'4'$. Multiple entries admit two choices (\cref{tab:acycexample}):
\begin{table}[h]
\centering
$$\begin{matrix}
\uu{1}234 & 2\uu{1}34 & 32\uu{1}4 & 423\uu{1}\\
\uu{2}134 & 4\uu{2}31,\textcolor{red}{3\uu{2}14} & 31\uu{2}4 & 341\uu{2},\textcolor{red}{413\uu{2}}\\
\uu{3}214 & 4\uu{3}21,\textcolor{red}{2\uu{3}14} & \textcolor{red}{42\uu{3}1},21\uu{3}4 & 421\uu{3}\\
\uu{4}231 & 2\uu{4}31 & 32\uu{4}1 & 321\uu{4},\textcolor{red}{213\uu{4}}\\
\end{matrix}$$
\caption{The table from $1234,1324,1423,3412,4132,4213$ has a choice containing a pattern.}
\label{tab:acycexample}
\end{table}
Picking the red entries yields an $S_4$-ensemble that is not triangle-free, since $3\uu{2}14, 42\uu{3}1, 213\uu{4}$ forms a pattern. Each choice of entries corresponds to a choice of fine mixed subdivision of the corresponding facet of $4\Delta_3$. The rows record the facet subdivisions as in \cref{thm:boundary}.
\end{example}

\subsection{The multiplicity theorem}
\label{subsec:mult3}

We do not need triangle-free for the following claim.

\begin{theorem}[Multiplicity theorem]
\label{thm:mult3}
Every $S_4$-ensemble contains a permutation that appears at least $3$ times.
\end{theorem}

\begin{proof}
Suppose every permutation has multiplicity capped at $2$. By the Line Dichotomy (\cref{cor:linetree}) every line's link graph is a star or a path.

We first show that no line is a star. Suppose some row's link graph is a star (a column star reduces to this by transpose-inverse). By \cref{lem:symmetry} normalize its center to $\T_{1,1} = \uu1234$, the three leaves are then $\T_{1,c} = e\,(1\ c)$, so row $1$ is
$$\uu1234, \quad 2\uu134, \quad 32\uu14, \quad 423\uu1.$$
The residual symmetry is the diagonal $\mathrm{Stab}(1) \cong S_3$, acting simultaneously on rows, columns and values $\{2,3,4\}$.

By linkage, $e$ at $(1,1)$ has a column link, necessarily the value $(1\ r)$ at $(r,1)$, a value already present at $(1,r)$. By the residual symmetry take $r = 2$: the entry $\uu2134$ at $(2,1)$ caps $2134$.

We next compute the diagonal entries. For $c \in \{2,3,4\}$, compatibility with row $1$ alone forces
$$\T_{c,c} \in \{e\} \cup \{(1\ c') : c' \neq c\}.$$
For $c = 2$ the six candidates and their killers are: $1\uu234$ and $3\uu214, 4\uu231$ survive, while $1\uu243$ dies against $\uu1234$ (quotient $(3\,4)$ avoids $\{2,1\}$), $3\uu241$ against $423\uu1$ (quotient $(1\,3)$ avoids $\{2,4\}$), and $4\uu213$ against $32\uu14$ (quotient $(1\,4)$ avoids $\{2,3\}$). The cells $(3,3)$ and $(4,4)$ are the images of this computation under the residual symmetry.

Row $1$ together with the capped entry $\uu2134$ uses $e$, $3214$, $4231$ once each and $2134$ twice. By the cap, the three diagonal cells $(2,2), (3,3), (4,4)$ therefore carry the three values $e, (1\,3), (1\,4)$ bijectively.

But such a diagonal is too rigid. Say $(1\ x)$ sits at $(y,y)$ and $e$ at $(z,z)$. Their quotient is $(1\ x)$ itself, and compatibility requires it to meet $\{y,z\}$, i.e.\ $x \in \{y, z\}$. Since $(1\ x)$ does not fix $x$ we have $y \neq x$, hence $z = x$: $e$ must occupy $(x,x)$. But the bijective placement above puts both $(1\,3)$ and $(1\,4)$ on the diagonal, forcing $e$ at $(3,3)$ and at $(4,4)$ simultaneously: a contradiction. 

It remains to rule out the case where every line is a path. Normalize row $1$ as follows: put the value $e$ at one end of its path at cell $(1,1)$ (\cref{lem:symmetry}), and use the residual diagonal $S_3$ to relabel the columns so that the path visits $1\!-\!2\!-\!3\!-\!4$. The row is then
$$\uu1234, \quad 2\uu134, \quad 23\uu14, \quad 234\uu1,$$
the prefix cycles $e, (1\,2), (1\,2\,3), (1\,2\,3\,4)$. This configuration retains one involution, the \newword{reversal} $(\pi, \rho) = ((2\,4),\ (1\,4)(2\,3))$ of \cref{lem:symmetry}: it preserves row $1$, exchanges the cells $(2,1) \leftrightarrow (4,4)$, and acts on values by $e \leftrightarrow 2341$ and $2134 \leftrightarrow 2314$.

Compatibility with row $1$ alone pins the two exchanged cells inside row $1$'s own value set:
$$\T_{2,1} \in \{\uu2134,\ \uu2314,\ \uu2341\}, \qquad \T_{4,4} \in \{123\uu4,\ 213\uu4,\ 231\uu4\}.$$
(At $(2,1)$: $\uu2143$ dies against $\uu1234$, $\uu2413$ against $23\uu14$, $\uu2431$ against $234\uu1$, dually at $(4,4)$.) Each of these two cells therefore caps a row-$1$ value. The cap forbids their being equal, and the direct compatibility check between the two cells kills the pairs $(2134, 2314)$ and $(2314, 2134)$, whose quotient $(2\,3)$ avoids $\{1,4\}$. Five ordered pairs survive, and the reversal pairs them as
$$(2134,\, e) \leftrightarrow (2341,\, 2314), \qquad (2314,\, e) \leftrightarrow (2341,\, 2134), \qquad (2341,\, e) \text{ fixed},$$
so we may assume $\T_{4,4} = 123\uu4$: the value $e$ is capped at $(1,1)$ and $(4,4)$.

We next show that column $4$ mirrors row $1$. The left quotient of $123\uu4$ with $234\uu1$ is the $4$-cycle $(1\,4\,3\,2)$, so by the Path Metric (\cref{lem:pathmetric}) rows $1$ and $4$ are the two \emph{ends} of column $4$'s path, and the spelled cycle prescribes the interior order: the column path is $1\!-\!2\!-\!3\!-\!4$ and column $4$ reads
$$234\uu1, \quad 134\uu2, \quad 124\uu3, \quad 123\uu4,$$
the suffix cycles $(1\,2\,3\,4), (2\,3\,4), (3\,4), e$: the mirror of row $1$'s prefix chain. The prefix row, the suffix column and the two copies of $e$ occupy seven cells, which we call the \newword{frame} (\cref{tab:frame}): it carries $e$ twice and the values $2134, 2314, 2341, 1342, 1243$ once each.

\begin{table}[h]
\centering
$$\begin{matrix}
\uu{1}234 & 2\uu{1}34 & 23\uu{1}4 & 234\uu{1}\\
\cdot & \cdot & \cdot & 134\uu{2}\\
\cdot & \cdot & \cdot & 124\uu{3}\\
\cdot & \cdot & \cdot & 123\uu{4}\\
\end{matrix}$$
\caption{The frame of the path case: the prefix row, the suffix column, and the value $e$ capped at $(1,1)$ and $(4,4)$. The nine remaining cells are unknown.}
\label{tab:frame}
\end{table}

It remains to derive a contradiction, and the main tool is the Path Metric (\cref{lem:pathmetric}): a $4$-cycle quotient certifies that the two positions are the ends of their line's path, and the spelled cycle names the neighbour of each end. We first check the cell $(2,2)$: of its six candidates, $e$ is capped, $1\uu243$ dies against $\uu1234$ at $(1,1)$ with quotient $(3\,4)$ avoiding $\{1,2\}$, $3\uu214$ dies against $123\uu4$ at $(4,4)$ with quotient $(1\,3)$ avoiding $\{2,4\}$, and $4\uu213$ dies against $134\uu2$ at $(2,4)$ with quotient $(1\,3)(2\,4)$, so $\T_{2,2} \in \{3\uu241, 4\uu231\}$.

We now show that $\T_{2,3} = 13\uu24$ is forced. The entry $134\uu2$ needs a row link: the required value at $(2,2)$ is $1\uu243$, which died above, so $\T_{2,1} = \uu2341$ or $\T_{2,3} = 13\uu24$. The entry $23\uu14$ needs a column link: the required value at $(3,3)$ is $21\uu34$, which dies against $124\uu3$ at $(3,4)$ with quotient $(1\,2)(3\,4)$, whose cycle $(1\,2)$ avoids $\{3,4\}$. So $\T_{2,3} = 13\uu24$ or $\T_{4,3} = 23\uu41$. If $\T_{2,3} \neq 13\uu24$, then $2341$ occupies $(1,4)$, $(2,1)$, $(4,3)$, exceeding the cap. So $\T_{2,3} = 13\uu24$.

The rest is forced. The right quotient of $4\uu231$ with $13\uu24$ is $(1\,4)(2\,3)$, so the Line Lemma (\cref{cor:linelemma}) bars $4231$ from row $2$ and $\T_{2,2} = 3\uu241$. The right quotient of $3\uu241$ with $13\uu24$ is the $4$-cycle $(2\,1\,4\,3)$, so by the Path Metric (\cref{lem:pathmetric}) row $2$ is the path $2\!-\!1\!-\!4\!-\!3$, whence $\T_{2,1} = 3241\,(1\,2) = \uu2341$, capping $2341$ at $(1,4)$ and $(2,1)$. Finally the left quotient of $2\uu134$ with $3\uu241$ is the $4$-cycle $(1\,4\,3\,2)$, so column $2$ is the path $1\!-\!4\!-\!3\!-\!2$, placing $(1\,4)\,2134 = 2\uu431$ at $(4,2)$ and then $(3\,4)\,2431 = 2\uu341$ at $(3,2)$, a third copy of $2341$ (\cref{tab:pathend}). Hence all lines being a path is impossible. This finishes the proof.

\begin{table}[h]
\centering
$$\begin{matrix}
\uu{1}234 & 2\uu{1}34 & 23\uu{1}4 & 234\uu{1}\\
\uu{2}341 & 3\uu{2}41 & 13\uu{2}4 & 134\uu{2}\\
\cdot & \textcolor{red}{2\uu{3}41} & \cdot & 124\uu{3}\\
\cdot & 2\uu{4}31 & \cdot & 123\uu{4}\\
\end{matrix}$$
\caption{The end of the path case: once $\T_{2,3} = 13\uu24$ is forced, row $2$ and then column $2$ are spelled out completely, and the entry at $(3,2)$ is a third copy of $2341$ (red), against the cap.}
\label{tab:pathend}
\end{table}

\end{proof}

\subsection{Promotion}
\label{subsec:promotion}

\begin{lemma}[Promotion]
\label{lem:mult3done}
If $\T$ is a triangle-free $S_4$-ensemble that has some $w \in S_4$ that appears at least $3$ times, then there is some $w' \in S_4$ that appears $4$ times.
\end{lemma}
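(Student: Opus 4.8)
\textbf{Proof proposal for Lemma \ref{lem:mult3done}.}

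The plan is to start from the assumption that some $w \in S_4$ appears at least three times in the acyclic ensemble $\T$ and argue that acyclicity forces the three occurrences to lie in a configuration from which a fourth occurrence can be extracted. Up to permuting rows and columns (and relabeling via left/right multiplication, which preserves being an $S_4$-ensemble and the acyclicity condition) I would normalize $w = 1234$ and place its three occurrences. Since $w$ appears three times, the underlines of these occurrences sit in three of the four positions; by the forced-entry condition, an occurrence of $1234$ with underline in position $j$ must lie in row $j$ and column $j$, i.e. on the diagonal. So after normalization I may assume $\T_{1,1} = \uu{1}234$, $\T_{2,2} = 1\uu{2}34$, $\T_{3,3} = 12\uu{3}4$, and I want to produce a fourth copy, which must then be $\T_{4,4} = 123\uu{4}$.

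The core of the argument is to run the linkage and compatibility conditions on rows/columns $1,2,3,4$ to pin down $\T_{4,4}$. Linkage on $\T_{1,1}=\uu{1}234$ gives a linked entry in row $1$ and in column $1$: these are obtained from $1234$ by one transposition that does not move the symbol in position $1$ out — more precisely a linked entry differs by a transposition of two positions, and the forced-entry constraints restrict which transpositions are admissible in a given cell. Carrying this out, the linked entries of $\T_{1,1}$ in column $1$ are among $\uu{2}134,\uu{3}214,\uu{4}231$ (swap position $1$ with $2,3,4$ respectively), and similarly in its row among $2\uu{1}34,32\uu{1}4,42\uu{1}3$. The same analysis at $\T_{2,2}$ and $\T_{3,3}$ gives their candidate neighbors. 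The key leverage is $2$-compatibility: once two diagonal copies of $1234$ are fixed, the induced $2\times 2$ submatchings they generate on pairs of positions are all ``identity-like,'' so any entry sharing a row or column with two of the fixed copies has its value on the relevant positions forced. Threading these forced values through, together with the ``generating a column/row'' moves formalized just before this lemma, should leave $\T_{4,4}$ with at most the two candidates $123\uu{4}$ and (its transposition variant) $213\uu{4}$ or similar, and then acyclicity — the no-$33$-cyclic-pattern condition — is what rules out the non-$w$ alternative: the bad choice of $\T_{4,4}$ together with two of the already-placed entries (or entries forced in the intermediate steps) would complete a $33$-cyclic pattern indexed by some $(w',v)$.

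I expect the main obstacle to be the case analysis in the middle step: depending on which of the candidate linked neighbors are actually realized at $\T_{1,2},\T_{1,3},\T_{2,1},\T_{2,3},\dots$, there are several branches, and in some branches one does not immediately get $\T_{4,4}=1234$ but instead first discovers a $22$-square or a $22$-halfsquare among the forced entries. Those subcases should be handled by invoking \cref{lem:22square} (for the square) or by a short separate argument (for the halfsquare, which one expects to either upgrade to a square or directly yield a fourth copy). So the real work is organizing the branches so that every branch terminates in one of: (i) a fourth copy of $w$ produced directly, (ii) a $22$-square, handled by \cref{lem:22square}, or (iii) a $33$-cyclic pattern, contradicting acyclicity. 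The bookkeeping — making sure the three starting copies plus the linkage-forced entries never leave room for a consistent ensemble without a fourth copy of $w$ — is the delicate part, but each individual deduction is an elementary compatibility or linkage check of the kind illustrated in the ``generating entries'' examples.
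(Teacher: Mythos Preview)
Your plan matches the paper's proof: normalize the three copies onto the diagonal, run linkage and 2-compatibility to constrain the off-diagonal entries, and close each branch either with a 22-square (handled by \cref{lem:22square}) or with a fourth copy, using acyclicity to kill bad alternatives along the way. Two corrections are worth noting before you execute it. First, the $w'$ that ends up appearing four times is not always $w=1234$: in the paper, once the row linkages force the only surviving candidate for $\T_{4,4}$ to be $231\uu{4}$ (the two options are $123\uu{4}$ and $231\uu{4}$, not $213\uu{4}$), one branch of the column-1 analysis produces four copies of $2314$, while the remaining branches terminate in 22-squares. Second, the place where acyclicity actually bites is not at $\T_{4,4}$ but at intermediate entries---for instance $\uu{2}431$ is forbidden because it completes a 33-cyclic pattern with $231\uu{4}$ and $12\uu{3}4$, and this forbiddance is what forces $\T_{2,1}=\uu{2}314$ in that branch. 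Finally, do not appeal to any halfsquare lemma here: in the paper \cref{lem:22halfsq} is proved \emph{using} the present lemma, so invoking it would be circular; the argument for \cref{lem:mult3done} needs only \cref{lem:22square}.
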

\begin{proof}
By \cref{lem:symmetry} normalize the multiplicity-$3$ permutation to $e$, appearing at the diagonal cells $(1,1),(2,2),(3,3)$, so $C_e = \{1,2,3\}$ and the missing column is $c_0 = 4$. Write $z = \T_{4,4}$, and assume $z \neq e$ (otherwise $e$ already has multiplicity four).

We first determine the candidate list at $z$'s remaining table cells. By the Blocker Lemma (\cref{lem:blocker}), $z^{-1}e$, hence $z$, is a single $3$-cycle on $[3]$, by the residual transpose-inverse symmetry assume $z = (1\,2\,3) = 231\uu4$. By the Triangle Filter (\cref{lem:trifilter}), no blocking target of $z$ agrees with $e$ at a column of $[3]$, i.e.\ fixes a point of $[3]$. We claim the entries at $z$'s remaining table cells satisfy
$$\T_{z(j),\,j} \in \{z\} \cup \{p_a := z\,(a\ 4) : a \in [3]\setminus\{j\}\}, \qquad j \in [3].$$
Indeed, write $g := \T_{z(j),\,j}$, an entry in column $j$ with $g(j) = z(j)$, if $g = z$ we are done, so assume $g \neq z$, whence $z \to_j g$ is an edge of the blocking graph. The entries $g$ and $e$ share column $j$, so by the Line Lemma (\cref{cor:linelemma}) the quotient $e^{-1}g = g$ is a single cycle through $j$, and it fixes no point of $[3]$ by the filter above, so its support is $[3]$ or all of $[4]$. The former is impossible: the only $3$-cycles on $[3]$ are $z$ and $z^{-1}$, and $z^{-1}(j) \neq z(j) = g(j)$. So $g$ is a $4$-cycle. Finally, compatibility of $g$ (column $j$) with the blocker $z$ (column $c_0 = 4$) forces every nontrivial cycle of $z^{-1}g$ to meet $\{j,4\}$, since $z^{-1}g$ fixes $j$ it is a single cycle through $4$ avoiding $j$, with support in $([3]\setminus\{j\}) \cup \{4\}$ (size three), and $z^{-1}g$ is \emph{odd} (the product of the even $z^{-1}$, a $3$-cycle, and the odd $g$, a $4$-cycle, has sign $(+1)(-1) = -1$), so this single cycle has even length, hence length two: a transposition $(a\ 4)$ with $a \in [3]\setminus\{j\}$. Thus $g = z\,(a\ 4) = p_a$, giving the entire list.

The rest is a counting argument, organized by whether a value repeats among the three subdiagonal entries. If $\T_{z(j),j} = z$ and the subdiagonal at another column $j_1$ carries $p_a$, then the quotient $z^{-1}p_a = (a\ 4)$ must meet $\{j_1, j\}$ by compatibility with the copy of $z$ in column $j$ (\cref{lem:compat}), and $a \neq j_1$, so $a = j$.

Suppose first that the three subdiagonal entries are pairwise distinct. If $z$ is among them, the other two are distinct perturbations, and pinning forces both of their indices to equal the $z$-column, a contradiction. So all three are perturbations, and they form a directed triangle: since $p_a$ agrees with $z$ off column $a$, the entry $p_{a_j}$ at $(z(j),j)$ has value $z(j') = p_{a_{j'}}(j')$ at each other column $j'$ (as $a_{j'} \neq j'$), so it sits at a table cell of $p_{a_{j'}}$. Cyclically this gives a directed triangle, forbidden since $\T$ is triangle-free.

So some value repeats. If $z$ repeats, pinning leaves no perturbation for the remaining subdiagonal, $z$ occupies all four of its table cells.

Otherwise a perturbation repeats, and its common value is the $p$ indexed by neither of its two columns, so by the residual cyclic symmetry normalize $\T_{z(2),2} = \T_{z(3),3} = p_1$, the third subdiagonal carrying $z$, $p_2$ or $p_3$. Composing transpose-inverse with conjugation by $(1\,2)$ restores the normalization, fixes $e$, $z$ and $p_1$, swaps $p_2$ with $p_3$, and swaps the subdiagonal columns $2$ and $3$, so we may assume the third subdiagonal is $z$ or $p_2$. In column $3$ the left quotient of $e$ at $(3,3)$ with $p_1$ at $(1,3)$ is the $4$-cycle $(3\,2\,4\,1)$, so by the Path Metric (\cref{lem:pathmetric}) the column path is $3\!-\!2\!-\!4\!-\!1$ and $\T_{2,3} = (2\,3)\,e$, whose quotient with $p_2$ at $(2,1)$ would be $(1\,3)(2\,4)$, barred from a shared row by the Line Lemma (\cref{cor:linelemma}): the third subdiagonal is $z$.

The subdiagonals are thus $z, p_1, p_1$ at columns $1, 2, 3$, and now we apply the Path Metric (\cref{lem:pathmetric}) to determine the table. In row $1$ the right quotient of $e$ at $(1,1)$ with $p_1$ at $(1,3)$ is the $4$-cycle $(1\,4\,2\,3)$: the row path is $1\!-\!4\!-\!2\!-\!3$ and $\T_{1,4} = e\,(1\,4)$. In column $2$ the left quotient of $e$ at $(2,2)$ with $p_1$ at $(3,2)$ is the $4$-cycle $(2\,4\,1\,3)$: the column path is $2\!-\!4\!-\!1\!-\!3$ and $\T_{4,2} = (2\,4)\,e$. In row $4$ the right quotient of $(2\,4)\,e$ with $z$ is the $4$-cycle $(2\,3\,1\,4)$: the row path is $2\!-\!3\!-\!1\!-\!4$, and $\T_{4,1} = z\,(1\,4) = p_1$. In column $4$ the left quotient of $z$ with $e\,(1\,4)$ at $(1,4)$ is the $4$-cycle $(4\,2\,3\,1)$: the column path is $4\!-\!2\!-\!3\!-\!1$, the neighbour of $z$ is row $2$, and $\T_{2,4} = (2\,4)\,z = p_1$. Now $p_1$ occupies its four table cells $(z(2),2), (z(3),3), (4,1), (2,4)$, so it has multiplicity $4$. The four remaining cells are forced as well: the paths of rows $1$ and $4$ determine $\T_{1,2}$ and $\T_{4,3}$, and Completion (\cref{cor:completion}) applied to row $3$ determines $\T_{3,1}$ and $\T_{3,4}$. The resulting ensemble is exactly the running example of \cref{sec:algebra} (\cref{tab:running}, reproduced in \cref{tab:runningpromo}).
\end{proof}

\begin{table}[h]
\centering
$$\begin{matrix}
\uu{1}234 & 4\uu{1}32 & 43\uu{1}2 & 423\uu{1}\\
\uu{2}314 & 1\uu{2}34 & 13\uu{2}4 & 431\uu{2}\\
\uu{3}214 & 4\uu{3}12 & 12\uu{3}4 & 421\uu{3}\\
\uu{4}312 & 1\uu{4}32 & 13\uu{4}2 & 231\uu{4}\\
\end{matrix}$$
\caption{The running example of \cref{tab:running}, reproduced for convenience: the ensemble forced by the promotion argument when a perturbation repeats, with $z = 231\uu{4}$ and $p_1 = 4312$ of multiplicity four.}
\label{tab:runningpromo}
\end{table}

\begin{theorem}
\label{thm:s4forward}
If $\T$ is a triangle-free $S_4$-ensemble, some $w \in S_4$ appears $4$ times. Equivalently: if the blocking graph of an $S_4$-ensemble contains no directed triangle, then it contains a sink.
\end{theorem}
\begin{proof}
By \cref{thm:mult3}, $\T$ contains a permutation of multiplicity at least $3$, by \cref{lem:mult3done}, some permutation has multiplicity $4$. The two formulations agree by \cref{def:pattern} and \cref{prop:blockdeg}(1).
\end{proof}

\subsection{Rigidity: the converse}

We now show that the pattern not only forbids full multiplicity, but freezes the entire ensemble.

\begin{theorem}[Rigidity]
\label{thm:rigid}
An $S_4$-ensemble containing a pattern is unique up to the symmetries of \cref{lem:symmetry}: it is the ensemble $E^*_4$ of \cref{tab:estar4}. In particular:
\begin{enumerate}
\item no permutation appears $4$ times in such an ensemble (the multiplicities of $E^*_4$ are $3,3,3,1,1,\dots,1$),
\item every pattern is contained in exactly one $S_4$-ensemble, and every $S_4$-ensemble without a permutation of multiplicity $4$ contains exactly one pattern. There are exactly $192$ of each.
\end{enumerate}
\end{theorem}
\begin{proof}
By \cref{lem:symmetry} and \cref{lem:patternform} normalize the pattern so that its three entries are the transpositions $\T_{1,1} = (3\,4)$, $\T_{2,2} = (1\,4)$, $\T_{3,3} = (2\,4)$, the diagonal of \cref{tab:estar4}. Two symmetries fix this configuration. One is given by the rotation $\gamma$: the two-sided action of \cref{lem:symmetry}(1) with $\pi = \rho = (1\,2\,3)$, which sends the cell $(i,j)$ to $((1\,2\,3)i, (1\,2\,3)j)$ and conjugates values by $(1\,2\,3)$, hence permutes the three entries cyclically. Another is the transpose-inverse, which fixes each of the three entries, as they are involutions at diagonal cells. By \cref{cor:transport}, each deduction below yields its images under $\gamma$ and transpose-inverse for free.

The row link of $(3\,4)$ at $(1,1)$ is $(3\,4)(1\ c)$ for some $c \in \{2,3,4\}$. At $c = 4$ the candidate is $324\uu1$, whose quotient with $(1\,4)$ at $(2,2)$ is $(1\,3)$, avoiding $\{4,2\}$ and hence incompatible by \cref{lem:compat}. At $c = 2$ the candidate is $2\uu143$, sharing column $2$ with $(1\,4)$, with left quotient the $4$-cycle $(1\,3\,4\,2)$: by the Path Metric (\cref{lem:pathmetric}) column $2$ would be the path $1\!-\!3\!-\!4\!-\!2$, placing $(3\,4)(1\,3)\,2143 = 2\uu431$ at $(4,2)$, whose quotient with $(2\,4)$ at $(3,3)$ is $(1\,4)$, avoiding $\{2,3\}$ and hence incompatible by \cref{lem:compat}. So $\T_{1,3} = (3\,4)(1\,3) = 42\uu13$, and column $3$, now containing $42\uu13$ and $(2\,4)$ with left quotient the $4$-cycle $(1\,4\,2\,3)$, is the path $1\!-\!4\!-\!2\!-\!3$, placing $(1\,4)\,4213 = 12\uu43$ at $(4,3)$ and $(2\,4)\,1243 = 14\uu23$ at $(2,3)$. Transport under $\gamma$ and $\gamma^2$ (\cref{cor:transport}), applied to these three derived entries, determines columns $1$ and $2$ as in \cref{tab:estar4}.
% $$\T_{2,1} = \uu2431, \quad \T_{3,1} = \uu3241, \quad \T_{4,1} = \uu4231, \qquad \T_{3,2} = 1\uu342, \quad \T_{1,2} = 4\uu132, \quad \T_{4,2} = 1\uu432.$$
\begin{table}[h]
\centering
$$\begin{matrix}
\centering
\uu{1}243 & 4\uu{1}32 & 42\uu{1}3 & 423\uu{1}\\
\uu{2}431 & 4\uu{2}31 & 14\uu{2}3 & 143\uu{2}\\
\uu{3}241 & 1\uu{3}42 & 14\uu{3}2 & 124\uu{3}\\
\uu{4}231 & 1\uu{4}32 & 12\uu{4}3 & 123\uu{4}\\
\end{matrix}$$
\caption{The rigid ensemble $E^*_4$. In cycle notation, $\T_{i,j} = (i\ 4)(j\ 4)$ for $i \neq j$ (with the convention $(4\ 4) = e$), $\T_{4,4} = e$, and the diagonal carries the pattern: $\T_{i,i} = (i{-}1\ \ 4)$ for $i \in [3]$, indices cyclic in $[3]$.}
\label{tab:estar4}
\end{table}

In row $4$ the three placed entries $\uu4231, 1\uu432, 12\uu43$ have pairwise $3$-cycle quotients, so all three pairs are at distance two in the link graph, which a path on four vertices cannot host: row $4$ is the star centered at column $4$, and its center is $\T_{4,4} = 4231\,(1\,4) = 123\uu4 = e$. Finally, transport under transpose-inverse (\cref{cor:transport}) gives $\T_{c,4} = \T_{4,c}^{-1}$, determining column $4$ as in \cref{tab:estar4}.
Every cell is thus determined, and the table is exactly the ensemble $E^*_4$ of \cref{tab:estar4}. One reads off that the three transpositions $(1\,4),(2\,4),(3\,4)$ each occur three times and no permutation occurs four times, giving claim (1).

For the counting in claim (2): each of the $192$ patterns extends, by the argument just given, to a unique ensemble, and these ensembles form the single symmetry orbit of $E^*_4$ (whose stabilizer has order $6$, so the orbit has size $1152/6 = 192$). That every ensemble \emph{without} a multiplicity-four permutation contains a pattern follows from \cref{thm:s4forward}. Each such ensemble equals $E^*_4$ up to symmetry by the rigidity just proved, so the $S_4$-ensembles without a multiplicity-four permutation are exactly the $192$-element orbit of $E^*_4$, in bijection with the patterns.
\end{proof}

\begin{theorem}
\label{thm:s4ensemble}
An $S_4$-ensemble $\T$ has a permutation of multiplicity four (equivalently, its blocking graph has a sink) if and only if $\T$ is triangle-free.
\end{theorem}
\begin{proof}
Triangle-free implying full multiplicity is \cref{thm:s4forward}. Conversely, if $\T$ contained a pattern then by \cref{thm:rigid} it would be the ensemble $E^*_4$, whose maximum multiplicity is three, so a permutation of multiplicity four forces triangle-freeness. 
\end{proof}

\begin{corollary}
\label{cor:4d3mixed}
Given $4\Delta_3$, assume that we have a fine mixed subdivision structure on the four boundary facets that comes from an acyclic system of permutations on the edges. Then we can complete this boundary information into a fine mixed subdivision of $4\Delta_3$ if and only if linkage holds and there are no patterns. Moreover, by \cref{thm:rigid}, the non-completable boundaries satisfying linkage are, at the level of their tables, a single orbit under relabeling.
\end{corollary}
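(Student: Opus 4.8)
The plan is to reduce \cref{cor:4d3mixed} to \cref{thm:s4ensemble} through the translation of Section~2 between boundary data of $n\Delta_{n-1}$ and $S_n$-ensembles. Given an acyclic system of permutations on the six edges of $4\Delta_3$ together with compatible fine mixed subdivisions on its four facets (each a copy of $4\Delta_2$), I would first extract a $4\times 4$ table $\T$: each facet corresponds under the Cayley trick to a face $\Delta_3\times\Delta_B$ with $B\subset[4]$ of size $3$, carries a matching ensemble by \cref{thm:MEsubdiv}, and recording that ensemble's $3$-by-$3$ matchings as underlined permutations of $[4]$ exactly as in \cref{ex:readtable} fills one row of $\T$ (the row indexed by $[4]\setminus B$). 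By construction $\T$ satisfies the forced-entry condition; it satisfies the compatible condition because two induced submatchings of $\T$ with the same index sets are read off entries lying on a common boundary face (two facets of $4\Delta_3$ meet in a single edge, on which the given subdivisions agree); and the linkage within each row is automatic, being the left-linkage axiom of that facet's matching ensemble applied to its $3$-by-$3$ matchings. Hence the one remaining ingredient of ``$\T$ is an $S_4$-ensemble'' is precisely the hypothesis ``linkage holds'', namely linkage across facets, that is, along columns of $\T$.

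Next I would run the equivalences. The hypotheses supply, besides $\T$, all matchings of size $\leq 3$ between $[4]$ and $[4]$ (exactly the matchings occurring in the facet ensembles, which agree on overlaps since the edge labels are globally prescribed), and ``linkage holds'' is precisely the statement that this size-$\leq 3$ collection obeys the axioms of \cref{def:ME} in every size $\leq 3$. By \cref{thm:MEsubdiv} and the Cayley trick, completing the boundary data into a fine mixed subdivision of $4\Delta_3$ is the same as extending this collection to a genuine matching ensemble between $[4]$ and $[4]$; such an extension forces ``linkage holds'' and, by the discussion in Section~2.3, amounts to adjoining a single $4$-by-$4$ matching, which is possible exactly when some $w\in S_4$ appears in every row and column of $\T$ --- equivalently, since a copy of $w$ in column $j$ must sit in row $w_j$, when some $w$ appears four times. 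By \cref{thm:s4ensemble} that in turn holds exactly when $\T$ avoids every $33$-cyclic pattern. Chaining the equivalences: the boundary data completes to a fine mixed subdivision of $4\Delta_3$ if and only if linkage holds and $\T$ contains no $33$-cyclic pattern.

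The bulk of the argument is careful bookkeeping, not a new idea. The one slightly delicate point is the linkage axiom for the size-$\leq 2$ matchings of the harvested collection: it goes through because each $2$-by-$2$ matching already occurs on two of the four facets, so any single-edge replacement that linkage demands can be performed inside a facet on which the required vertex is present. One also has to keep the conventions straight --- under the identification $\T_{i,j}\leftrightarrow$ (matching between $[4]\setminus\{j\}$ on the left and $[4]\setminus\{i\}$ on the right) the facets of $4\Delta_3$ become the rows of $\T$, which is the source of the asymmetry ``row-linkage automatic, column-linkage hypothesized''. Finally, the facet subdivisions genuinely enter the hypothesis, not merely the edge system, because the edge labels alone do not determine $\T$: as seen in the minimal counterexample to \cref{conj:acyc}, a $3$-by-$3$ entry can have two edge-compatible values. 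Beyond this translation I expect no real obstacle, since everything substantive is already contained in \cref{thm:s4ensemble} and \cref{thm:MEsubdiv}.
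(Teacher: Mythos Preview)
Your proposal is correct and is exactly the translation the paper intends; the corollary appears immediately after \cref{thm:s4ensemble} with no proof beyond the sentence ``We now translate the result back to the realm of fine mixed subdivisions of $4\Delta_3$.'' Your careful unpacking of the Section~2 dictionary --- in particular the observation that row-linkage comes for free from each facet's matching ensemble while column-linkage is the hypothesized ``linkage holds,'' and the check that adjoining the single $4$-by-$4$ matching produces a genuine matching ensemble --- simply fills in details the paper leaves implicit.
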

\begin{proof}
Form the table $\T$ whose row $i$ lists the maximal matchings of the facet opposite $i'$, as in \cref{thm:boundary}. For $n = 4$ two facets of $4\Delta_3$ meet in an edge, so the four facet subdivisions agree on common faces exactly when they induce a common system of permutations on the edges, which is our hypothesis. By the discussion following \cref{def:boundaryME}, the facet data then assembles into a boundary matching ensemble precisely when right linkage holds among the maximal matchings, and on $\T$ this is the column half of the linkage condition, the row half being automatic from left linkage within each facet. When linkage holds, \cref{thm:boundary} makes $\T$ an $S_4$-ensemble whose boundary extends to a fine mixed subdivision of $4\Delta_3$ if and only if some permutation appears four times, and by \cref{thm:s4ensemble} this happens if and only if there is no pattern. The final claim restates \cref{thm:rigid}.
\end{proof}

\section{Further directions}
\label{sec:further}

We begin with the picture at $n = 5$, where the obstruction $E^*_4$ extends naturally. A computer search, carried out with the assistance of Claude (Anthropic), over $S_5$-ensembles with no permutation of multiplicity five shows that, up to symmetry (\cref{lem:symmetry}), there is exactly one: the ensemble $E^*_5$ of \cref{tab:e5}, whose most occurring permutations are $s = (1\,2)$, $c = (1\,2\,3\,4\,5)$, and $t = (1\,4\,5)$, of multiplicities $4$, $4$, $3$. The obstruction is a directed triangle of the blocking graph among these three, each blocking the next from full multiplicity. The same search confirms that $E^*_5$ is the table of a genuine boundary matching ensemble of $5\Delta_4$, so it produces a fine mixed subdivision structure on the boundary of $5\Delta_4$ that cannot be completed to the interior. We state these computational findings without proof. 

\begin{table}[h]
\centering
$$\begin{matrix}
\uu{1}2345 & 2\uu{1}345 & 23\uu{1}45 & 234\uu{1}5 & 2345\uu{1}\\
\uu{2}1345 & 4\uu{2}351 & 43\uu{2}51 & 413\uu{2}5 & 4135\uu{2}\\
\uu{3}2451 & 2\uu{3}451 & 42\uu{3}51 & 214\uu{3}5 & 2145\uu{3}\\
\uu{4}2351 & 2\uu{4}351 & 23\uu{4}51 & 213\uu{4}5 & 2135\uu{4}\\
\uu{5}2341 & 2\uu{5}341 & 23\uu{5}41 & 234\uu{5}1 & 2134\uu{5}\\
\end{matrix}$$
\caption{The ensemble $E^*_5$: the unique $S_5$-ensemble, up to symmetry, in which no permutation appears five times.}
\label{tab:e5}
\end{table}

We now propose the generalization of our multiplicity theorem and promotion theorem.

\begin{conjecture}
\label{conj:multn1}
Every $S_n$-ensemble contains a permutation of multiplicity at least $n-1$. Every triangle-free $S_n$-ensemble has a permutation of multiplicity $n$.
\end{conjecture}

The computer classification behind \cref{tab:e5} verifies both statements for $n = 5$: an $S_5$-ensemble without a permutation of multiplicity five lies in the symmetry orbit of $E^*_5$, which attains multiplicity four and contains a directed triangle.

From \cref{cor:4d3mixed}, avoiding patterns is a necessary condition on the boundary of $n\Delta_3$ for extendability, so we conjecture:

\begin{conjecture}
\label{conj:3dimconj}
Given $n\Delta_3$, assume that we have a fine mixed subdivision structure on the four boundary facets that comes from an acyclic system of permutations on the edges. Then we can complete this boundary information into a fine mixed subdivision of $n\Delta_3$ if and only if linkage holds and there are no patterns.
\end{conjecture}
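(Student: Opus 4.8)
The plan is to prove \cref{thm:s4ensemble} by combining \cref{prop:main} with a direct argument for the converse direction. \cref{prop:main} already gives us one implication: an acyclic $S_4$-ensemble necessarily contains some $w \in S_4$ appearing $4$ times. So the only remaining task is to show that if an $S_4$-ensemble $\T$ contains a 33-cyclic pattern, then no permutation can appear $4$ times. The idea is to assume we have such a pattern, normalize it by permuting rows and columns, and then propagate forced entries via linkage and 2-compatibility until enough of the table is determined to see that every permutation appears with multiplicity at most $3$ (in fact, we will pin down a $3 \times 3$ block in which all nine entries are distinct, which is already enough, since any $w$ appearing $4$ times would have to appear at least twice within any three rows or three columns).

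Here are the steps in order. First, I would use the row/column symmetry of the $S_4$-ensemble axioms to place the 33-cyclic pattern on the diagonal: without loss of generality $\T_{1,1} = \uu{1}243$, $\T_{2,2} = 4\uu{2}31$, $\T_{3,3} = 14\uu{3}2$ (this is the pattern indexed by $(1234,1234)$ from the example after \cref{def:33cyclic}, up to relabeling). Second, I would record the immediate 2-compatibility obstructions: each diagonal entry $\T_{i,i}$ cannot be linked to the entry in column $4$ of its row nor to the entry in row $4$ of its column, because the would-be linked permutation fails 2-compatibility against the other two diagonal entries. This forces, for each $i \in [3]$, that $\T_{i,i}$ is linked (in its row) to one of the other two in-block columns. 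Third, I would do the case split on whether $\T_{1,1} \sim \T_{1,2}$ or $\T_{1,1} \sim \T_{1,3}$. In the first case, chasing linkage around the block — $\T_{2,2} \sim \T_{3,2}$, then $\T_{3,3} \sim \T_{3,1}$ — produces an entry ($\uu{3}412$) that is not 2-compatible with an already-determined entry ($4\uu{3}21$), so this case is vacuous. In the second case, the analogous chase determines all nine entries of the $\{1,2,3\} \times \{1,2,3\}$ block, and one checks by inspection that no permutation repeats among them. Fourth, I would conclude: since no $w$ appears even twice in this block, and any $w$ appearing $4$ times in the full table would appear at least twice among rows $1,2,3$ and again at least twice among columns $1,2,3$, hence at least twice within the $3\times 3$ block, we get a contradiction. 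Therefore no $w$ appears $4$ times, completing the equivalence.

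I do not expect a major obstacle here — the work is a bounded, mechanical propagation since everything lives in $S_4$ and the 33-cyclic pattern rigidly constrains the diagonal. The one place that needs care is making sure the case analysis in Step 3 is genuinely exhaustive: one must verify that the 2-compatibility and linkage constraints leave no unexamined branch, in particular that in the surviving case ($\T_{1,1} \sim \T_{1,3}$) each subsequent linkage step has a \emph{unique} forced target rather than a choice. The two tables \cref{tab:ifnotacyclicC1} and \cref{tab:ifnotacyclicC2} encode exactly this bookkeeping, so the proof reduces to checking those propagations entry by entry; each step is justified by ``$\T_{a,b} \not\sim \T_{c,b}$ because of a named 2-incompatibility, hence linkage forces $\T_{a,b} \sim \T_{a',b}$.'' After assembling the two implications, I would close by restating \cref{cor:4d3mixed} as the geometric reformulation, which follows immediately from \cref{thm:MEsubdiv}, \cref{prop:edgeinterpret}, and the remark that an $S_4$-ensemble records precisely the $3$-by-$3$ matchings of a putative matching ensemble extending the given boundary.
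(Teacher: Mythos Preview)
The statement you were asked to address is \cref{conj:3dimconj}, which concerns $n\Delta_3$ for \emph{general} $n$. This is an open conjecture in the paper; there is no proof of it to compare against. What you have written is instead a proof of \cref{thm:s4ensemble} (and, via the geometric translation, of \cref{cor:4d3mixed}), which is the special case $n=4$. For that restricted target your argument is correct and is essentially identical to the paper's own proof: same normalization of the 33-cyclic pattern to the diagonal, same observation that $\T_{i,i}\not\sim\T_{i,4}$ and $\T_{i,i}\not\sim\T_{4,i}$, same two-branch case split on $\T_{1,1}\sim\T_{1,2}$ versus $\T_{1,1}\sim\T_{1,3}$, and the same conclusion from the $3\times 3$ block in \cref{tab:ifnotacyclicC2} having nine distinct entries.

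The genuine gap is that none of this touches the conjecture for $n>4$. Your closing sentence conflates \cref{cor:4d3mixed} with \cref{conj:3dimconj}; they are different statements. For general $n$, the boundary data of $n\Delta_3$ gives a family of $3$-by-$3$ matchings (one $S_4$-ensemble for each size-$4$ subset of $[n]$), and even if every such ensemble is acyclic so that \cref{thm:s4ensemble} produces a $4$-by-$4$ matching on each size-$4$ subset, you still must show that these newly constructed $4$-by-$4$ matchings satisfy linkage \emph{among each other} in order to obtain a matching ensemble for $n\Delta_3$ via \cref{thm:MEsubdiv}. The paper explicitly names this as the main obstacle immediately after stating the conjecture, and your proposal offers no mechanism for it.
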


The main obstacle is that although \cref{thm:s4ensemble} constructs the individual $4$-by-$4$ matchings compatible with the boundary information, we do not know that they satisfy linkage among each other. \cref{conj:3dimconj} is equivalent to a local-to-global principle: the boundary of $n\Delta_3$ is extendable if and only if its restriction to every sub-$4\Delta_3$ is extendable. 

\begin{question}
\label{ques:localglobal}
Is the boundary of a fine mixed subdivision of $m\Delta_{n-1}$ extendable if and only if its restriction to every sub-$n\Delta_{n-1}$ is extendable?
\end{question}

\begin{remark}[The extremal case]
\label{rem:extremal}
For $m \leq n-1$, every valid boundary of $m\Delta_{n-1}$ is extendable: every matching required in a matching ensemble between $[m]$ and $[n]$ has size at most $m \leq n-1$, so its right vertex set is a proper subset of $[n]$ and it already appears in the boundary data of some facet. The boundary determines the entire ensemble, and $m = n$ is the first case where a genuinely new interior matching must be produced.
\end{remark}

Finally, the geometry behind the obstructions remains mysterious.

\begin{question}
What is the geometric or topological interpretation of the pattern, the blocking cycles, and the ensembles $E^*_4$ and $E^*_5$, in terms of tropical hyperplane arrangements, tropical oriented matroids \cite{ArdilaDevelin07}, or trianguloids \cite{GalashinNenashevPostnikov23}? The unified axiomatic framework of \cite{Yao25} may be the natural setting for such an interpretation.
\end{question}

\subsection*{Acknowledgements}
The author thanks Thalia Kahozi, Alex Kim, and Katherine Liu for useful discussions that made the finding of the pattern possible. Claude (Anthropic) and Gemini (Google) were used to generate the figures, examples, tables of this paper and for editing the prose. Moreover Claude (Anthropic) was used to check every step, especially the ones that are an exhaustive search, for every proof in Section 4.

\printbibliography

@article{ArdilaCeballos13,
author = {Ardila, Federico and Ceballos, Cesar},
title = {Acyclic Systems of Permutations and Fine Mixed Subdivisions of Simplices},
year = {2013},
issue_date = {April 2013},
publisher = {Springer-Verlag},
address = {Berlin, Heidelberg},
volume = {49},
number = {3},
issn = {0179-5376},
url = {https://doi.org/10.1007/s00454-013-9485-1},
doi = {10.1007/s00454-013-9485-1},
journal = {Discrete Comput. Geom.},
month = {4},
pages = {485–510},
numpages = {26},
keywords = {Tropical geometry, Triangulations, Product of simplices, Fine mixed subdivisions}
}

@article{ArdilaBilley07,
title = {Flag arrangements and triangulations of products of simplices},
journal = {Advances in Mathematics},
volume = {214},
number = {2},
pages = {495-524},
year = {2007},
issn = {0001-8708},
doi = {https://doi.org/10.1016/j.aim.2007.02.014},
url = {https://www.sciencedirect.com/science/article/pii/S0001870807000692},
author = {Federico Ardila and Sara Billey},
keywords = {Matroids, Permutation arrays, Schubert calculus, Tropical hyperplane arrangements, Littlewood–Richardson coefficients},
}

@article{Horn16,
title = {A Topological Representation Theorem for tropical oriented matroids},
journal = {Journal of Combinatorial Theory, Series A},
volume = {142},
pages = {77-112},
year = {2016},
issn = {0097-3165},
doi = {https://doi.org/10.1016/j.jcta.2016.03.003},
url = {https://www.sciencedirect.com/science/article/pii/S009731651600025X},
author = {Silke Horn},
keywords = {Tropical geometry, Tropical oriented matroids, Tropical hyperplanes, Tropical pseudohyperplanes, Mixed subdivisions, Topological Representation Theorem, Oriented matroids},
}

@article{ArdilaDevelin07,
author = {Ardila, Federico and Develin, Mike},
year = {2007},
month = {06},
pages = {795-816},
title = {Tropical hyperplane arrangements and oriented matroids},
volume = {262},
journal = {Mathematische Zeitschrift},
doi = {10.1007/s00209-008-0400-z}
}

@article{Santos13,
author = {Santos, Francisco},
year = {2013},
pages = {317-328},
title = {Some acyclic systems of permutations are not
realizable by triangulations of a product of simplices},
volume = {589},
journal = {Algebraic and Combinatorial Aspects of Tropical Geometry, Contemp. Math}
}

@article{Santos05,
author = {Santos, Francisco},
year = {2005},
pages = {151-177},
title = {Some acyclic systems of permutations are not
realizable by triangulations of a product of simplices},
volume = {374},
journal = {Integer Points in Polyhedra - Geometry, Number Theory, Algebra, Optimization (proceedings of the AMS-IMS-SIAM Summer Research Conference)}
}

@article{LohoSmith20,
title = {Matching fields and lattice points of simplices},
journal = {Advances in Mathematics},
volume = {370},
pages = {107232},
year = {2020},
issn = {0001-8708},
doi = {https://doi.org/10.1016/j.aim.2020.107232},
url = {https://www.sciencedirect.com/science/article/pii/S0001870820302589},
author = {Georg Loho and Ben Smith},
keywords = {Matching field, Triangulation, Lattice points, Product of simplices, Linkage property, Bipartite graph},
}

@article{OYoo15,
  TITLE = {{Matching Ensembles}},
  AUTHOR = {Suho Oh and Hwanchul Yoo},
  URL = {http://dmtcs.episciences.org/2539},
  DOI = {10.46298/dmtcs.2539},
  JOURNAL = {{Discrete Mathematics \& Theoretical Computer Science}},
  VOLUME = {{DMTCS Proceedings, 27th International Conference on Formal Power Series and Algebraic Combinatorics (FPSAC 2015)}},
  YEAR = {2015},
  MONTH = Jan,
  keywords = "own"
}

@article{OYoo10,
  TITLE = {{Triangulations of $\Delta_{n-1} \times \Delta_{d-1}$ and tropical oritend matroids}},
  AUTHOR = {Suho Oh and Hwanchul Yoo},
  URL = {http://dmtcs.episciences.org/2539},
  DOI = {10.46298/dmtcs.2539},
  JOURNAL = {{Discrete Mathematics \& Theoretical Computer Science}},
  VOLUME = {{DMTCS Proceedings, 27th International Conference on Formal Power Series and Algebraic Combinatorics (FPSAC 2010)}},
  YEAR = {2011},
  MONTH = Jan,
keywords = "own"
}

@article{Liu20,
  author       = {Gaku Liu},
  title        = {Flip-Connectivity of Triangulations of the Product of a Tetrahedron
                  and Simplex},
  journal      = {Discret. Comput. Geom.},
  volume       = {63},
  number       = {1},
  pages        = {1--30},
  year         = {2020},
  url          = {https://doi.org/10.1007/s00454-019-00157-z},
  doi          = {10.1007/s00454-019-00157-z},
  timestamp    = {Thu, 12 Mar 2020 17:21:22 +0100},
  biburl       = {https://dblp.org/rec/journals/dcg/Liu20.bib},
  bibsource    = {dblp computer science bibliography, https://dblp.org}
}

@article{Liu18,
  author       = {Gaku Liu},
  title        = {A Zonotope and a Product of Two Simplices with Disconnected Flip Graphs},
  journal      = {Discret. Comput. Geom.},
  volume       = {59},
  number       = {4},
  pages        = {810--842},
  year         = {2018},
  url          = {https://doi.org/10.1007/s00454-018-9971-6},
  doi          = {10.1007/s00454-018-9971-6},
  timestamp    = {Thu, 12 Mar 2020 17:21:28 +0100},
  biburl       = {https://dblp.org/rec/journals/dcg/Liu18a.bib},
  bibsource    = {dblp computer science bibliography, https://dblp.org}
}

@article{KalmanPostnikov17,
author = {Kálmán, Tamás and Postnikov, Alexander},
title = {Root polytopes, Tutte polynomials, and a duality theorem for bipartite graphs},
journal = {Proceedings of the London Mathematical Society},
volume = {114},
number = {3},
pages = {561-588},
keywords = {05C31 (primary), 05C65 (secondary)},
doi = {https://doi.org/10.1112/plms.12015},
url = {https://londmathsoc.onlinelibrary.wiley.com/doi/abs/10.1112/plms.12015},
eprint = {https://londmathsoc.onlinelibrary.wiley.com/doi/pdf/10.1112/plms.12015},
year = {2017}
}

@article{zbMATH07843943,
    author = {Celaya, Marcel and Loho, Georg and Yuen, Chi Ho},
    title = {Oriented matroids from triangulations of products of simplices},
    fjournal = {Selecta Mathematica. New Series},
    journal = {Sel. Math., New Ser.},
    issn = {1022-1824},
    volume = {30},
    number = {3},
    pages = {32},
    note = {Id/No 47},
    year = {2024},
    language = {English},
    doi = {10.1007/s00029-024-00938-2},
    keywords = {52C40,05E45,12K99,14T15,52B40},
    zbMATH = {7843943},
    Zbl = {1537.52033}
}

@article{zbMATH07814959,
    author = {von Bell, Matias and Yip, Martha},
    title = {On the subdivision algebra for the polytope {{\(\mathcal{U}_{I,\overline{J}}\)}}},
    fjournal = {Annals of Combinatorics},
    journal = {Ann. Comb.},
    issn = {0218-0006},
    volume = {28},
    number = {1},
    pages = {43--65},
    year = {2024},
    language = {English},
    doi = {10.1007/s00026-023-00650-6},
    keywords = {05E45,05E16,16S99,52B22},
    zbMATH = {7814959},
    Zbl = {1533.05306}
}

@article{zbMATH07419599,
    author = {Yu, Li and Masuda, Mikiya},
    title = {On descriptions of products of simplices},
    fjournal = {Chinese Annals of Mathematics. Series B},
    journal = {Chin. Ann. Math., Ser. B},
    issn = {0252-9599},
    volume = {42},
    number = {5},
    pages = {777--790},
    year = {2021},
    language = {English},
    doi = {10.1007/s11401-021-0290-5},
    keywords = {52B11,52B70,53C21,53C23,57S17},
    zbMATH = {7419599},
    Zbl = {1477.52020}
}

@article{zbMATH06825996,
    author = {Seacrest, Tyler and Su, Francis Edward},
    title = {A lower bound technique for triangulations of simplotopes},
    fjournal = {SIAM Journal on Discrete Mathematics},
    journal = {SIAM J. Discrete Math.},
    issn = {0895-4801},
    volume = {32},
    number = {1},
    pages = {1--28},
    year = {2018},
    language = {English},
    doi = {10.1137/140972020},
    keywords = {52B11,52B12,52B05},
    zbMATH = {6825996},
    Zbl = {1381.52022}
}

@incollection{zbMATH06577215,
    author = {Ceballos, Cesar and Padrol, Arnau and Sarmiento, Camilo},
    title = {Dyck path triangulations and extendability (extended abstract)},
    booktitle = {Proceedings of the 27th international conference on formal power series and algebraic combinatorics, FPSAC 2015, Daejeon, South Korea, July 6--10, 2015},
    pages = {73--84},
    year = {2015},
    publisher = {Nancy: The Association. Discrete Mathematics \& Theoretical Computer Science (DMTCS)},
    language = {English},
    keywords = {05E40,05B35,52B40,14T99},
    zbMATH = {6577215},
    Zbl = {1335.05238}
}

@incollection{zbMATH06535105,
    author = {Horn, Silke},
    title = {Tropical oriented matroids},
    booktitle = {Combinatorial methods in topology and algebra. Based on the presentations at the INdAM conference, CoMeTa 2013, Cortona, Italy, September 2013},
    isbn = {978-3-319-20155-9},
    pages = {53--57},
    year = {2015},
    publisher = {Cham: Springer},
    language = {English},
    doi = {10.1007/978-3-319-20155-9_11},
    keywords = {14T05,05B35,52B40},
    zbMATH = {6535105},
    Zbl = {1329.14113}
}

@article{zbMATH06381976,
    author = {Ceballos, Cesar and Padrol, Arnau and Sarmiento, Camilo},
    title = {Dyck path triangulations and extendability},
    fjournal = {Journal of Combinatorial Theory. Series A},
    journal = {J. Comb. Theory, Ser. A},
    issn = {0097-3165},
    volume = {131},
    pages = {187--208},
    year = {2015},
    language = {English},
    doi = {10.1016/j.jcta.2014.10.009},
    keywords = {51M20,14T05,52B99,52C40},
    zbMATH = {6381976},
    Zbl = {1307.51017}
}

@article{zbMATH05192027,
    author = {Ardila, Federico and Billey, Sara},
    title = {Flag arrangements and triangulations of products of simplices},
    fjournal = {Advances in Mathematics},
    journal = {Adv. Math.},
    issn = {0001-8708},
    volume = {214},
    number = {2},
    pages = {495--524},
    year = {2007},
    language = {English},
    doi = {10.1016/j.aim.2007.02.014},
    keywords = {14N15,32S22,14T05,52C35},
    zbMATH = {5192027},
    Zbl = {1194.14078}
}

@incollection{zbMATH02223039,
    author = {Santos, Francisco},
    title = {The {Cayley} trick and triangulations of products of simplices},
    booktitle = {Integer points in polyhedra. Geometry, number theory, algebra, optimization. Proceedings of an AMS-IMS-SIAM joint summer research conference on integer points in polyhedra, Snowbird, UT, USA, July 13--17, 2003},
    isbn = {0-8218-3459-2},
    pages = {151--177},
    year = {2005},
    publisher = {Providence, RI: American Mathematical Society (AMS)},
    language = {English},
    keywords = {52B20,52B11},
    zbMATH = {2223039},
    Zbl = {1079.52508}
}

@article{zbMATH00881214,
    author = {De Loera, Jes{\'u}s A.},
    title = {Nonregular triangulations of products of simplices},
    fjournal = {Discrete \& Computational Geometry},
    journal = {Discrete Comput. Geom.},
    issn = {0179-5376},
    volume = {15},
    number = {3},
    pages = {253--264},
    year = {1996},
    language = {English},
    doi = {10.1007/BF02711494},
    keywords = {52B11},
    zbMATH = {881214},
    Zbl = {0851.52014}
}

@article{zbMATH07399067,
 author = {Ehrenborg, R. and Hetyei, G. and Readdy, M.},
 title = {Classification of uniform flag triangulations of the boundary of the full root polytope of type {A}},
 fjournal = {Acta Mathematica Hungarica},
 journal = {Acta Math. Hung.},
 issn = {0236-5294},
 volume = {163},
 number = {2},
 pages = {462--511},
 year = {2021},
 language = {English},
 doi = {10.1007/s10474-020-01099-2},
 keywords = {52B05,52B12,05A15,05E45,33C10},
 zbMATH = {7399067},
 Zbl = {1488.52018}
}

@article{DOCHTERMANN2012304,
title = {Tropical types and associated cellular resolutions},
journal = {Journal of Algebra},
volume = {356},
number = {1},
pages = {304-324},
year = {2012},
issn = {0021-8693},
doi = {https://doi.org/10.1016/j.jalgebra.2011.12.028},
url = {https://www.sciencedirect.com/science/article/pii/S0021869312000294},
author = {Anton Dochtermann and Michael Joswig and Raman Sanyal},
keywords = {Tropical convexity, Resolutions of monomial ideals},
}

@misc{Yao25,
      title={Linkage axioms for generic tropical oriented matroids}, 
      author={Yuan Yao},
      year={2025},
      eprint={2506.11265},
      archivePrefix={arXiv},
      primaryClass={math.CO},
      url={https://arxiv.org/abs/2506.11265}
}

@article{GalashinNenashevPostnikov23,
  title   = {Trianguloids and triangulations of root polytopes},
  author  = {Galashin, Pavel and Nenashev, Gleb and Postnikov, Alexander},
  journal = {Journal of Combinatorial Theory, Series A},
  volume  = {201},
  pages   = {105802},
  year    = {2024},
  doi     = {10.1016/j.jcta.2023.105802}
}

@article{BernsteinZelevinsky93,
  title   = {Combinatorics of maximal minors},
  author  = {Bernstein, David and Zelevinsky, Andrei},
  journal = {Journal of Algebraic Combinatorics},
  volume  = {2},
  pages   = {111--121},
  year    = {1993},
  doi     = {10.1023/A:1022492222930}
}
\end{document}